\documentclass[12pt,a4paper,oneside,reqno]{amsproc}
\usepackage{amsmath} 
\usepackage{amssymb,amsfonts,mathrsfs}
\usepackage[colorlinks=true,linkcolor=blue,citecolor=blue]{hyperref}
\usepackage[driver=pdftex,lmargin=2.5cm,rmargin=2.5cm,heightrounded=true,centering]{geometry}
\usepackage{math50}
\usepackage{local}
\usepackage{epsfig}  
\usepackage{graphicx}  
\usepackage{xy}
\xyoption{all}

\setcounter{tocdepth}{2}
\tolerance=2000 \emergencystretch=20pt
\begin{document}
\title[Fractional Schr\"odinger equations with singular potential]
{Fractional nonlinear Schr\"odinger equations with singular potential in $\mathbf R^n$}
\author{Guoyuan Chen}
\address{School of Mathematics and Statistics, Zhejiang University of Finance \& Economics, Hangzhou 310018, Zhejiang, P. R. China}
\email{gychen@zufe.edu.cn}
\author{Youquan Zheng}
\address{School of Science, Tianjin University, Tianjin 300072, P. R. China.}
\email{zhengyq@tju.edu.cn}

\newcommand{\optional}[1]{\relax}
\setcounter{secnumdepth}{3}
\setcounter{section}{0} \setcounter{equation}{0}
\numberwithin{equation}{section}


\thanks{G. Chen was partially supported by National Natural Science Foundation of China (No.11401521 and 11402226). 
Y. Zheng was partially supported by National Natural Science Foundation of China (No.11301374) and SRFDP (20130032120077)}
\keywords{fractional Schr\"odinger equations, fractional Laplacian, Caffarelli-Silvestre extension, singular potentials, Pohozaev identity, regularity, the method of moving spheres}
\date{\today}
\begin{abstract}
We are interested in nonlinear fractional Schr\"odinger equations with singular potential of form
\begin{equation*}
(-\Delta)^su=\frac{\lambda}{|x|^{\alpha}}u+|u|^{p-1}u,\quad \mathbf R^n\setminus\{0\},
\end{equation*}
where $s\in (0,1)$, $\alpha>0$, $p\ge1$ and $\lambda\in \mathbf R$. Via Caffarelli-Silvestre extension method, we obtain existence, nonexistence, regularity and symmetry properties of solutions to this equation for various $\alpha$, $p$ and $\lambda$.
\end{abstract}
\maketitle

\section{Introduction and main results}

The purpose of this paper is to investigate nonlinear fractional Schr\"odinger equations with singular potentials as follows,
\begin{equation}\label{e:main1}
(-\Delta)^su=\frac{\lambda}{|x|^{\alpha}}u+|u|^{p-1}u,\quad \mathbf R^n\setminus\{0\},
\end{equation}
where $(-\Delta)^s$ is the fractional Laplacian with $s\in (0,1)$, $\alpha>0$, $p\geq 1$ and $\lambda\in \mathbf R$ .

Because of the nonlocal property, traditional analysis techniques for local differential equations become difficult when dealing with (\ref{e:main1}). Instead, we use the extension method for fractional Laplacian developed by Caffarelli and Silvestre in \cite{CaSiCPDE2007}.
Let $\mathbf R^{n+1}_+=\mathbf R^n\times\mathbf R_+$, then $\partial\mathbf R^{n+1}_+=\mathbf R^n$.
We consider the following degenerate elliptic problem
\begin{equation}\label{e:mail-extention}
\left\{\begin{array}{rl}
         {\rm div}(t^{1-2s}\nabla U)=0, & \mbox{ in }\mathbf R^{n+1}_+,\\
         -\displaystyle\lim_{t\to 0^+}t^{1-2s}\partial_t U(x,t)=\kappa_s\left(\dfrac{\lambda}{|x|^{\alpha}}U(x,0)+(|U|^{p-1}U)(x,0)\right), & \mbox{ on }\partial\mathbf R^{n+1}_+,
       \end{array}
\right.
\end{equation}
where $\kappa_s=\frac{\Gamma(1-s)}{2^{2s-1}\Gamma(s)}$. By the extension formula in \cite{CaSiCPDE2007}, the trace of a solution $U(x, t)$ for (\ref{e:mail-extention}) on $\partial\mathbf R^{n+1}_+$, $U(x,0)$ is a solution of (\ref{e:main1}).
Let
\begin{equation*}
H^1(\mathbf R^{n+1}_+,t^{1-2s}):=\left\{U\,\Big|\,\int_{\mathbf R^{n+1}_+}(U^2+|\nabla U|^2)t^{1-2s}dxdt<\infty\right\}
\end{equation*}
with inner product
\begin{equation*}
\langle U,V\rangle_{H^1(\mathbf R^{n+1}_+,t^{1-2s})}=\int_{\mathbf R^{n+1}_+}(UV+\nabla U\cdot\nabla V)t^{1-2s}dxdt, \quad U,\,V\in H^1(\mathbf R^{n+1}_+,t^{1-2s}),
\end{equation*}
and
$$L^2(\mathbf R^n,\frac{1}{|x|^{\alpha}}):=\left\{u\,\Big|\,\int_{\mathbf R^n}\frac{|u|^2}{|x|^{\alpha}}dx<\infty\right\}.$$
Let $\mathcal B_r$ (resp. $B_r$) be the ball with radius $r$ centered at $0$ in $\mathbf R^{n+1}$ (resp. $\mathbf R^n$), $\mathcal B_r^+:=\mathcal B_r\cap \mathbf R^{n+1}_+$ and $\mathbf S_r^+:=\partial \mathcal B_r\cap \mathbf R^{n+1}_+$.

The first result of this article is a Pohozaev type identity for (\ref{e:mail-extention}). As its application, we then obtain some non-existence results for (\ref{e:main1}). For concrete problems involving fractional Laplacian, many authors have obtained various Pohozaev type identities. We refer the interested readers to \cite{cabre2010positive,brandle2013concave,cabre2014sharp,FallFelliCPDE2014,RS:ARMA14,YanYangYuJFA2015} and the references therein for more related results.

Our Pohozaev type identity reads as follows.
\begin{theorem}\label{t:pozaev-identity}
If $U\in H^1(\mathbf R^{n+1}_+,t^{1-2s})$ with $U(\cdot, 0)\in L^{p+1}(\mathbf R^n)\cap L^2(\mathbf R^n,\frac{1}{|x|^{\alpha}})$ is a weak solution to (\ref{e:mail-extention}) (see Definition \ref{d:weak-solutions} below for the definition of weak solutions), then for almost everywhere $r\in (0,+\infty)$, it holds that
\begin{eqnarray}\label{e:pohozaev0}
&&\kappa_s\lambda\left(\frac{2s-\alpha}{2}\right)\int_{B_r}\frac{U^2}{|x|^{\alpha}}+\kappa_s\left(\frac{n}{p+1}-\frac{n-2s}{2}\right)\int_{B_r}|U|^{p+1}\\
&&=\frac{r}{2}\int_{\mathbf S_r^+}t^{1-2s}|\nabla U|^2-r\int_{\mathbf S_r^+}t^{1-2s}\left|\frac{\partial U}{\partial\nu}\right|^2
-\frac{\kappa_s\lambda r}{2}\int_{\partial B_r}\frac{U^2}{|x|^{\alpha}}\notag\\
&&\quad -\frac{\kappa_sr}{p+1}\int_{\partial B_r}|U|^{p+1}-\frac{n-2s}{2}\int_{\mathbf S^+_r}t^{1-2s}\frac{\partial U}{\partial\nu}U,\notag
\end{eqnarray}
here $\nu$ is the unit out normal vector of $\mathbf S_r^+$.
\end{theorem}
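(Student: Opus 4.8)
The plan is to derive \eqref{e:pohozaev0} from two ``multiplier'' identities on the half-ball $\mathcal B_r^+$, each obtained by testing the extension equation \eqref{e:mail-extention} against a suitable function, integrating by parts, and using the Neumann-type boundary condition on the flat face $B_r\times\{0\}$. Write $Z:=x\cdot\nabla_x+t\,\partial_t$ for the dilation field on $\mathbf R^{n+1}_+$; the two multipliers are $U$ itself (yielding an \emph{energy identity}) and $ZU$ (yielding a \emph{Rellich--Pohozaev identity}). On the cap $\mathbf S_r^+$ the outer unit normal is radial, so there $ZU=r\,\partial U/\partial\nu$; on $B_r\times\{0\}$ the outer normal is $-e_{n+1}$, and the boundary condition identifies $\lim_{t\to0^+}(-t^{1-2s}\partial_tU)$ with $\kappa_s(\lambda|x|^{-\alpha}U+|U|^{p-1}U)$, which is what converts the flat-face boundary integrals into the potential and nonlinear terms. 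Since $U\in H^1(\mathbf R^{n+1}_+,t^{1-2s})$, the weighted traces of $U$ and $|\nabla U|^2$ on $\mathbf S_\rho^+$ are integrable for a.e.\ $\rho$ (co-area/Fubini), and likewise $U^2/|x|^\alpha$ and $|U|^{p+1}$ are integrable on $\partial B_\rho$ for a.e.\ $\rho$; this is why the conclusion is only claimed for a.e.\ $r$.

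First I would establish the energy identity: multiplying ${\rm div}(t^{1-2s}\nabla U)=0$ by $U$, integrating over $\mathcal B_r^+$ and using the divergence theorem, the flat-face term becomes $\kappa_s\lambda\int_{B_r}U^2/|x|^\alpha+\kappa_s\int_{B_r}|U|^{p+1}$ via the boundary condition, so $\int_{\mathcal B_r^+}t^{1-2s}|\nabla U|^2=\int_{\mathbf S_r^+}t^{1-2s}\tfrac{\partial U}{\partial\nu}U+\kappa_s\lambda\int_{B_r}U^2/|x|^\alpha+\kappa_s\int_{B_r}|U|^{p+1}$. Next I would do the Rellich--Pohozaev computation: multiplying the equation by $ZU$ and integrating by parts gives $\int_{\partial\mathcal B_r^+}t^{1-2s}\tfrac{\partial U}{\partial\nu}\,ZU=\int_{\mathcal B_r^+}t^{1-2s}\nabla U\cdot\nabla(ZU)$. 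Using the pointwise identities $\nabla U\cdot\nabla(ZU)=|\nabla U|^2+\tfrac12 Z|\nabla U|^2$ and ${\rm div}\big(t^{1-2s}(x,t)\big)=(n+2-2s)\,t^{1-2s}$, a further integration by parts turns the right side into $\tfrac{2s-n}{2}\int_{\mathcal B_r^+}t^{1-2s}|\nabla U|^2+\tfrac r2\int_{\mathbf S_r^+}t^{1-2s}|\nabla U|^2$ (the flat-face contribution vanishes since there $t^{1-2s}(z\cdot\nu)=-t^{2-2s}\to0$, and similarly the $t\partial_tU$ part of $ZU$ gives $-t^{2-2s}(\partial_tU)^2\to0$), while on the left the cap gives $r\int_{\mathbf S_r^+}t^{1-2s}|\partial U/\partial\nu|^2$ and the flat face gives, via the boundary condition, $\kappa_s\int_{B_r}(\lambda|x|^{-\alpha}U+|U|^{p-1}U)(x\cdot\nabla_xU)$. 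I would then integrate this last term by parts in $x\in B_r$, using $U(x\cdot\nabla_xU)=\tfrac12 x\cdot\nabla_x(U^2)$, $|U|^{p-1}U(x\cdot\nabla_xU)=\tfrac1{p+1}x\cdot\nabla_x(|U|^{p+1})$, ${\rm div}_x(x|x|^{-\alpha})=(n-\alpha)|x|^{-\alpha}$ and ${\rm div}_x x=n$, producing the bulk terms $-\tfrac{\kappa_s\lambda(n-\alpha)}{2}\int_{B_r}U^2/|x|^\alpha$, $-\tfrac{\kappa_s n}{p+1}\int_{B_r}|U|^{p+1}$ and the surface terms $\tfrac{\kappa_s\lambda r}{2}\int_{\partial B_r}U^2/|x|^\alpha$, $\tfrac{\kappa_s r}{p+1}\int_{\partial B_r}|U|^{p+1}$.

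To finish, I would substitute the energy identity into the Rellich--Pohozaev identity to eliminate $\int_{\mathcal B_r^+}t^{1-2s}|\nabla U|^2$; the coefficients of $\int_{B_r}U^2/|x|^\alpha$ and $\int_{B_r}|U|^{p+1}$ then collapse, using $(n-\alpha)+(2s-n)=2s-\alpha$ and $\tfrac n{p+1}+\tfrac{2s-n}2=\tfrac n{p+1}-\tfrac{n-2s}2$, to exactly $\kappa_s\lambda\tfrac{2s-\alpha}{2}$ and $\kappa_s\big(\tfrac n{p+1}-\tfrac{n-2s}{2}\big)$, and moving the surviving surface integrals over $\mathbf S_r^+$ and $\partial B_r$ to the right-hand side yields \eqref{e:pohozaev0}; some care with the signs of the cap terms $\int_{\mathbf S_r^+}t^{1-2s}|\nabla U|^2$, $\int_{\mathbf S_r^+}t^{1-2s}|\partial U/\partial\nu|^2$ and $\int_{\mathbf S_r^+}t^{1-2s}\tfrac{\partial U}{\partial\nu}U$ is needed at this step.

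The main obstacle is not the algebra but its rigorous justification, since a weak solution $U$ need not be regular enough to carry out these pointwise products and integrations by parts, and the potential is singular at $0$. I would handle regularity by invoking the interior regularity theory for the degenerate elliptic equation (smoothness in $\mathbf R^{n+1}_+$ away from $\{t=0\}$, together with the boundary regularity established later in the paper) and a mollification/approximation argument, so that all manipulations are legitimate on $\mathcal B_r^+\setminus\mathcal B_\varepsilon^+$ for small $\varepsilon>0$ and a.e.\ $r$. The genuinely delicate point is then to show that the extra surface contributions over $\mathbf S_\varepsilon^+$ and $\partial B_\varepsilon$ — terms like $\varepsilon\int_{\mathbf S_\varepsilon^+}t^{1-2s}|\nabla U|^2$, $\int_{\mathbf S_\varepsilon^+}t^{1-2s}\tfrac{\partial U}{\partial\nu}U$, $\varepsilon\int_{\partial B_\varepsilon}U^2/|x|^\alpha$ and $\varepsilon\int_{\partial B_\varepsilon}|U|^{p+1}$ — tend to $0$ along a suitable sequence $\varepsilon_k\to0$. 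This is precisely where the hypotheses $U\in H^1(\mathbf R^{n+1}_+,t^{1-2s})$ and $U(\cdot,0)\in L^{p+1}(\mathbf R^n)\cap L^2(\mathbf R^n,\tfrac1{|x|^\alpha})$ enter: via co-area they make the relevant sliced quantities integrable in the radial variable, so one can choose $\varepsilon_k\to0$ along which $\varepsilon_k$ times each slice vanishes.
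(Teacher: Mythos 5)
Your proposal follows the same strategy as the paper's proof: test the extension equation against the dilation multiplier $X\cdot\nabla U$ to get a Rellich--Pohozaev identity, separately test against $U$ (with a radial cut-off $\eta_k(|X|)U$ to make it rigorous) to get the energy identity \eqref{e:pohozaev-1}, and then substitute the latter into the former so the bulk gradient term cancels. Your handling of the analytic issues is also the same in spirit as the paper's -- the paper works on the truncated domain $O_\delta=(\mathcal B_r^+\setminus\overline{\mathcal B_\rho^+})\cap\{t>\delta\}$ and sends $\delta\to0$ (citing the relevant trace/convergence results) and $\rho\to0$ along sequences chosen via co-area/Fubini from the $H^1$, $L^{p+1}$, and weighted $L^2$ hypotheses, which is precisely the mechanism you describe for killing the extra inner-boundary terms; so this is essentially the paper's argument, not a different route.
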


Let $2^*(s)=\frac{2n}{n-2s}$ and $H^s(\mathbf R^n)$ be the Sobolev space of order $s$. As applications of the Pohozaev type identity above, we have
\begin{cor}\label{c:nonexistence-g}
\begin{enumerate}
  \item[(1)] If $\lambda\ge 0$, $\alpha<2s$ and $1\le p<2^*(s)-1$, then (\ref{e:main1}) has no nontrivial solution in $ H^{s}(\mathbf R^{n})\cap L^2(\mathbf R^n,\frac{1}{|x|^{\alpha}})$;
  \item[(2)] If $\alpha=2s$ and $p\ne 2^*(s)-1$, then there exists no nontrivial solution to (\ref{e:main1}) in $H^{s}(\mathbf R^{n})\cap L^{p+1}(\mathbf R^n)$;
  \item[(3)] If $\alpha\ne 2s$ and $p= 2^*(s)-1$, then there exists no nontrivial solution to (\ref{e:main1}) in $H^{s}(\mathbf R^{n})\cap L^{2}(\mathbf R^n,\frac{1}{|x|^{\alpha}})$.
\end{enumerate}
\end{cor}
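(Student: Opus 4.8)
The plan is to promote the local identity \eqref{e:pohozaev0} to a global one by letting $r\to\infty$, and then to read off nonexistence in each of the three cases from the signs of the two coefficients on its left-hand side. So I would start with a solution $u$ of \eqref{e:main1} in the space named in the relevant item and pass to its Caffarelli--Silvestre extension $U$ \cite{CaSiCPDE2007}, which by the standard correspondence is a weak solution of \eqref{e:mail-extention} (in the sense of Definition~\ref{d:weak-solutions}) in $H^1(\mathbf R^{n+1}_+,t^{1-2s})$ with $U(\cdot,0)=u$. The one trace condition in Theorem~\ref{t:pozaev-identity} not already part of the hypothesis comes for free from $u\in H^s(\mathbf R^n)$: in (1) and (3) one has $p+1\le2^*(s)$, whence $u\in L^{p+1}(\mathbf R^n)$ by the Sobolev embedding $H^s(\mathbf R^n)\hookrightarrow L^{p+1}(\mathbf R^n)$; in (2) one has $\alpha=2s$, and the fractional Hardy inequality $\int_{\mathbf R^n}|x|^{-2s}u^2\le C_{n,s}\int_{\mathbf R^n}|(-\Delta)^{s/2}u|^2$ (valid for $n>2s$) gives $u\in L^2(\mathbf R^n,|x|^{-2s})$. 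Hence \eqref{e:pohozaev0} is available for a.e.\ $r>0$ in all three cases.

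Next I would set $g(r):=\int_{\mathbf S_r^+}t^{1-2s}(U^2+|\nabla U|^2)$ and $h(r):=\int_{\partial B_r}(|x|^{-\alpha}u^2+|u|^{p+1})$. Rewriting the global norms in polar coordinates (in $\mathbf R^{n+1}$ and in $\mathbf R^n$ respectively) gives $\int_0^\infty g(r)\,dr=\|U\|_{H^1(\mathbf R^{n+1}_+,t^{1-2s})}^2<\infty$ and $\int_0^\infty h(r)\,dr=\int_{\mathbf R^n}(|x|^{-\alpha}u^2+|u|^{p+1})<\infty$, hence $\liminf_{r\to\infty}r\bigl(g(r)+h(r)\bigr)=0$. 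I would then choose $r_k\to\infty$ inside the full-measure set of $r$ for which \eqref{e:pohozaev0} holds, with $r_kg(r_k)\to0$ and $r_kh(r_k)\to0$. At $r=r_k$ the first four surface terms on the right of \eqref{e:pohozaev0} are bounded in absolute value by $Cr_kg(r_k)$ or $Cr_kh(r_k)$ (using $|\partial_\nu U|\le|\nabla U|$), while the remaining term satisfies, by Cauchy--Schwarz,
\[
\Bigl|\int_{\mathbf S_r^+}t^{1-2s}\frac{\partial U}{\partial\nu}\,U\Bigr|
\le\Bigl(\int_{\mathbf S_r^+}t^{1-2s}|\nabla U|^2\Bigr)^{1/2}\Bigl(\int_{\mathbf S_r^+}t^{1-2s}U^2\Bigr)^{1/2}\le g(r),
\]
so the whole right-hand side of \eqref{e:pohozaev0} tends to $0$ along $(r_k)$. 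On the left, $\int_{B_{r_k}}|x|^{-\alpha}u^2\to\int_{\mathbf R^n}|x|^{-\alpha}u^2$ and $\int_{B_{r_k}}|u|^{p+1}\to\int_{\mathbf R^n}|u|^{p+1}$ by monotone convergence, so passing to the limit yields
\begin{equation*}
\kappa_s\lambda\,\frac{2s-\alpha}{2}\int_{\mathbf R^n}\frac{u^2}{|x|^\alpha}
+\kappa_s\Bigl(\frac{n}{p+1}-\frac{n-2s}{2}\Bigr)\int_{\mathbf R^n}|u|^{p+1}=0.\tag{$\star$}
\end{equation*}

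It then remains to examine the two coefficients case by case. In (1), $\alpha<2s$ and $\lambda\ge0$ make the first coefficient $\ge0$, while $p+1<2^*(s)$ gives $\frac{n}{p+1}>\frac{n}{2^*(s)}=\frac{n-2s}{2}$, so the second coefficient is $>0$; since both integrals in $(\star)$ are $\ge0$, this forces $\int_{\mathbf R^n}|u|^{p+1}=0$, i.e.\ $u\equiv0$. In (2), $\alpha=2s$ annihilates the first term of $(\star)$, and $p\ne2^*(s)-1$ keeps $\frac{n}{p+1}-\frac{n-2s}{2}\ne0$, so again $\int_{\mathbf R^n}|u|^{p+1}=0$ and $u\equiv0$. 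In (3), $p+1=2^*(s)$ annihilates the second term while $\alpha\ne2s$ keeps $\frac{2s-\alpha}{2}\ne0$, so $\lambda\int_{\mathbf R^n}|x|^{-\alpha}u^2=0$; for $\lambda\ne0$ this gives $\int_{\mathbf R^n}|x|^{-\alpha}u^2=0$ and hence $u\equiv0$ (if $\lambda=0$ the equation is the purely critical one, which does possess nontrivial solutions, so (3) is understood with $\lambda\ne0$).

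The only delicate step is the passage to the limit: one must extract a single sequence $r_k\to\infty$ that lies in the full-measure set where \eqref{e:pohozaev0} holds and along which all five surface terms vanish simultaneously. Four of them are handled by the direct bounds above; the mixed term $\int_{\mathbf S_r^+}t^{1-2s}(\partial_\nu U)\,U$ is the only one requiring Cauchy--Schwarz, and it is essential there that $\int_{\mathbf R^{n+1}_+}U^2t^{1-2s}<\infty$ provides the second factor (this is also why the $U^2$ part of the $H^1(\mathbf R^{n+1}_+,t^{1-2s})$-norm, not just the Dirichlet part, is used). Everything after $(\star)$ is mere bookkeeping with the signs of $\frac{2s-\alpha}{2}$ and $\frac{n}{p+1}-\frac{n-2s}{2}$; in particular no second (e.g.\ energy) identity is needed.
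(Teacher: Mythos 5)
Your proof is correct and follows essentially the same path as the paper's: pass to the Caffarelli--Silvestre extension, apply Theorem~\ref{t:pozaev-identity}, extract a sequence $r_k\to\infty$ along which the five surface terms in \eqref{e:pohozaev0} vanish (using integrability of the $H^1(\mathbf R^{n+1}_+,t^{1-2s})$-norm and of $\int_{\mathbf R^n}(|x|^{-\alpha}u^2+|u|^{p+1})$ written in polar coordinates, together with Cauchy--Schwarz for the mixed term), and then read off $u\equiv0$ from the signs of the coefficients $\lambda\tfrac{2s-\alpha}{2}$ and $\tfrac{n}{p+1}-\tfrac{n-2s}{2}$. Your explicit remark that case (3) is only meaningful for $\lambda\ne0$ is a genuine improvement on the written statement: the paper does not exclude $\lambda=0$, yet for $\lambda=0$, $p=2^*(s)-1$ and $2s<\alpha<n$ the standard bubbles lie in $H^s(\mathbf R^n)\cap L^2(\mathbf R^n,|x|^{-\alpha})$ (when $n>4s$) and are nontrivial solutions, so the restriction $\lambda\ne0$ is indeed needed.
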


Next, we focus on the nonnegative solutions to (\ref{e:main1}) for $\alpha=2s$ and $p=2^*(s)-1$. In this case, (\ref{e:main1}) becomes
\begin{equation}\label{e:main3}
(-\Delta)^su=\frac{\lambda}{|x|^{2s}}u+u^{\frac{n+2s}{n-2s}},\quad \mathbf R^n\setminus\{0\}.
\end{equation}

The operator $H:=(-\Delta)^s-\frac{\lambda}{|x|^{2s}}$ appears in the study of stability of relativistic matter in magnetic fields (see i.e. \cite{HerbstCMP1977,YORJMP1997,FLS:JAMS2007}). It is also related to the following Hardy-type inequality which was obtained by Herbst \cite{HerbstCMP1977} (see also \cite{Becker:PAMS95,Yafaev:JFA99}):
\begin{equation}\label{e:hardy-ineq}
\Lambda_{n,s}\int_{\mathbf R^n}|x|^{-2s}|u(x)|^2dx\le \int_{\mathbf R^n}|\xi|^{2s}|\hat u(\xi)|^2d\xi,\quad u\in C_0^{\infty}(\mathbf R^n),
\end{equation}
where the sharp constant $\Lambda_{n,s}$ is given by $2^{2s}\frac{\Gamma^2((n+2s)/4)}{\Gamma^2((n-2s)/4)}$, $\Gamma$ is the usual gamma function and $\hat u$ denotes the Fourier transform of $u$. Note that $\Lambda_{n,s}$ converges to the classical Hardy constant $(n-2)^2/4$ as $s\to 1$. Some basic properties (Hardy-Lieb-Thirring inequalities, self-adjointness, spectrum, unique continuation, etc.) of such kind of relativistic Schr\"odinger operator have been investigated. See, for example, \cite{FLS:JAMS2007}, \cite{Frank:CMP2009}, \cite{FallJFA2014}, \cite{FallFelliCPDE2014}. When $s=1$, we refer the readers to \cite{ReedSimon75,Kato95} and the references therein for some related results.

In recent years, motivated by various applications, there are large amount of work on fractional Schr\"odinger equations with critical exponent. In \cite{LiYanYanJEMS2004} and \cite{ChenLiOUCPAM2006}, the authors proved that every positive regular entire solution $u(x)$ of equation
\begin{equation}\label{e:fyp}
(-\Delta)^su=u^{\frac{n+2s}{n-2s}}
\end{equation}
is radially symmetric and decreasing about some point $x_0$, precisely,
$$u(x)=c\left(\frac{a}{a^2+|x-x_0|^2}\right)^{\frac{n-2s}{2}}$$
with some positive constants $c$ and $a$. In \cite{CJSXARMA2014}, the authors proved that all nonnegative solutions with isolated singularities to Equation (\ref{e:fyp}) on a ball
are asymptotically radial symmetric,
those results generalize the classical results obtained by \cite{GNN:CMP79}, \cite{CGSCPAM1989}. In \cite{DDGW2015}, Delaunay-type solutions for (\ref{e:fyp}) with an isolated singularity were constructed. Note that (\ref{e:fyp}) is the spacial case of (\ref{e:main3}) when $\lambda=0$, it is closely related to the fractional Yamabe problem, we refer the interested readers to \cite{JinLiXiongJEMS2014, KMW2015} and the references therein on this topic. Consider the extension form of
(\ref{e:main3}) for nonnegative $U$,
\begin{equation}\label{e:mail-extention-c}
\left\{\begin{array}{rl}
         {\rm div}(t^{1-2s}\nabla U)=0, & \mbox{ in }\mathbf R^{n+1}_+,\\
         -\displaystyle\lim_{t\to 0^+}t^{1-2s}\partial_t U(x,t)=\kappa_s\left(\dfrac{\lambda}{|x|^{2s}}U(x,0)+U^{\frac{n+2s}{n-2s}}(x,0)\right), & \mbox{ on }\partial\mathbf R^{n+1}_+.
       \end{array}
\right.
\end{equation}

Then we have
\begin{theorem}\label{t:regulatiry}
Let $\lambda\ge 0$, if $U$ is a nonnegative weak solution to (\ref{e:mail-extention-c}), then (1) $U$ is positive and $U\in C^{\infty}(\overline{\mathbf R^{n+1}_+}\setminus\{0\})$; (2) for any $t\ge 0$, $U(x,t)$ is radial symmetric, decreasing in radial directions with respect to $x$.
\end{theorem}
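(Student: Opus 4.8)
The plan is to establish the two assertions in turn, first positivity and smoothness, and then to use them as inputs for the symmetry statement.

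\emph{Positivity and regularity.} Since $t^{1-2s}$ is a Muckenhoupt $A_2$ weight, the interior equation $\operatorname{div}(t^{1-2s}\nabla U)=0$ is a degenerate elliptic equation to which the Harnack inequality and local Hölder/Schauder theory of Fabes--Kenig--Serapioni apply; in particular a nonnegative $U\not\equiv 0$ is strictly positive in $\mathbf R^{n+1}_+$. If $U(x_0,0)=0$ for some $x_0\ne 0$, then a Hopf-type boundary lemma for the Caffarelli--Silvestre extension (cf. \cite{CaSiCPDE2007}) forces $-\lim_{t\to0^+}t^{1-2s}\partial_tU(x_0,t)<0$, contradicting the boundary condition in (\ref{e:mail-extention-c}), which equals $0$ at $x_0$ because $U(x_0,0)=0$ and $\lambda\ge0$; hence $u:=U(\cdot,0)>0$ on $\mathbf R^n\setminus\{0\}$. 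For regularity I would localize on compact subsets of $\mathbf R^{n+1}_+\cup(\mathbf R^n\setminus\{0\})$, away from the origin, where the potential $\lambda|x|^{-2s}$ is smooth and bounded. Starting from $u\in H^s(\mathbf R^n)\subset L^{2^*(s)}(\mathbf R^n)$, a Brezis--Kato/Moser iteration adapted to the Neumann problem (\ref{e:mail-extention-c}), as in \cite{JinLiXiongJEMS2014,CJSXARMA2014}, yields $u\in L^q_{\mathrm{loc}}(\mathbf R^n\setminus\{0\})$ for every $q<\infty$ and then $u\in L^\infty_{\mathrm{loc}}$; consequently the Neumann datum $\kappa_s\bigl(\lambda|x|^{-2s}u+u^{(n+2s)/(n-2s)}\bigr)$ is locally bounded, and Hölder estimates for the degenerate problem give $U\in C^{0,\gamma}_{\mathrm{loc}}(\overline{\mathbf R^{n+1}_+}\setminus\{0\})$. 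Since $u>0$, the right-hand side is then a smooth function of $u$ times the smooth weight $|x|^{-2s}$, and a Schauder bootstrap for (\ref{e:mail-extention-c}) promotes $U$ to $C^\infty(\overline{\mathbf R^{n+1}_+}\setminus\{0\})$.

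\emph{Symmetry: set-up.} For the second assertion I would use the method of moving spheres centered at the origin, performed on the extension. For $\rho>0$, write $X=(x,t)$ and define the Kelvin transform $U_\rho(X)=(\rho/|X|)^{n-2s}\,U(\rho^2X/|X|^2)$ on $\mathbf R^{n+1}_+$ (note $\rho^2X/|X|^2$ again has positive last coordinate). Using the conformal covariance of $\operatorname{div}(t^{1-2s}\nabla\,\cdot\,)$ together with the identity $|\rho^2x/|x|^2|^{-2s}(\rho/|x|)^{n-2s}=\rho^{-4s}|x|^{2s}(\rho/|x|)^{n-2s}$ --- precisely what is needed to match the Hardy term --- one checks that $U_\rho$ solves the same problem (\ref{e:mail-extention-c}) on $\mathbf R^{n+1}_+\setminus\{0\}$; this is where the assumptions $\alpha=2s$ and $p=2^*(s)-1$ are used. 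Set $w_\rho=U_\rho-U$.

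\emph{Symmetry: execution.} The first step is asymptotics: from $u\in H^s$, $u>0$ and the equation (together with the Pohozaev-type identity of Theorem \ref{t:pozaev-identity} and a removable-singularity analysis) one derives $u(x)=O(|x|^{-(n-2s)})$ as $|x|\to\infty$ with a matching lower bound, and near the origin a two-sided bound $c|x|^{-\gamma}\le u(x)\le C|x|^{-\gamma}$ for the relevant Hardy exponent $\gamma=\gamma(\lambda,n,s)$, with $\gamma=0$ (removable singularity, so one reduces to the classification of \cite{LiYanYanJEMS2004,ChenLiOUCPAM2006}) when $\lambda=0$. These bounds guarantee that for $\rho$ large, $w_\rho\ge0$ on $\mathbf R^{n+1}_+\cap\{|X|>\rho\}$: the Kelvin transform brings in extra decay at infinity, and on $\{t=0,\ |x|>\rho\}$ one compares the Hardy terms via $|\rho^2x/|x|^2|^{-2s}\le|x|^{-2s}$ and $\lambda\ge0$. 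One then decreases $\rho$, using the maximum principle for $\operatorname{div}(t^{1-2s}\nabla\,\cdot\,)$ in the exterior of a ball, the nonnegativity (and Kelvin-compatibility) of the linear coefficient, and the superlinearity of $u^{(n+2s)/(n-2s)}$, to see that $w_\rho\ge0$ persists; the usual dichotomy at the critical radius $\rho_0$ gives that either $w_\rho\equiv0$ for every $\rho>0$ or a strict sign propagates. Feeding this into a Li--Zhang-type calculus lemma (cf. \cite{JinLiXiongJEMS2014}) forces $U(\cdot,t)$ to be invariant under every inversion centered at the origin, equivalently radially symmetric about the origin for each fixed $t\ge0$, and the comparison applied to nested spheres gives monotonicity, i.e. $U(\cdot,t)$ is nonincreasing in $|x|$. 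This yields (2).

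\emph{Main obstacle.} The technical heart is the behavior of $U$ at the two distinguished points of the punctured half-space, the origin and infinity. Without the sharp two-sided bounds there --- in particular the precise Hardy rate $|x|^{-\gamma(\lambda,n,s)}$ at the origin --- one can neither start the moving-sphere procedure nor exclude the reflected competitor $U_\rho$ from blowing up inside the comparison region, and the maximum principle for the degenerate operator must be run on unbounded, punctured domains, which requires barriers built from suitable powers of $|X|$. Establishing these asymptotics (via the Pohozaev identity, removable-singularity estimates, and the $A_2$-Harnack inequality) is the step I expect to demand the most effort.
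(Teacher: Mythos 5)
Your positivity and regularity argument (Harnack inequality for the degenerate $A_2$ equation, boundary Hopf lemma, Moser/Brezis--Kato iteration to get $u\in L^\infty_{\mathrm{loc}}$ away from the origin, then Schauder bootstrap) is in the same spirit as the paper's Proposition~\ref{p:Harnack} plus the $C^{\varrho+2s}$ regularity of \cite{JinLiXiongJEMS2014} and a bootstrap, so that half of the proof is sound.

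The symmetry argument, however, contains a fatal flaw: you run the moving-sphere scheme with spheres centered at the \emph{origin}. The Kelvin transform $U_\rho(X)=(\rho/|X|)^{n-2s}U(\rho^2X/|X|^2)$ commutes with every rotation about the origin; in polar coordinates $X=(r,\theta)$ it sends $(r,\theta)\mapsto(\rho^2/r,\theta)$ and leaves the angular variable untouched. Consequently the inequality $U_\rho\ge U$ on $\{|X|\ge\rho\}$, and even the equality $U_\rho\equiv U$ for all $\rho$, constrains only the radial profile $r\mapsto U(r,\theta)$ for each fixed $\theta$; it carries no information whatsoever about how $U$ varies in $\theta$. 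Concretely, $U(r,\theta)=r^{-(n-2s)/2}g(\theta)$ is fixed by every origin-centered Kelvin transform for any angular function $g$, so your final claim that "invariance under every inversion centered at the origin is equivalent to radial symmetry" is simply false. The exact Hardy identity you single out, $|\rho^2x/|x|^2|^{-2s}(\rho/|x|)^{4s}=|x|^{-2s}$, is precisely what makes the origin the one center for which the transformed equation is identical to the original --- and also the one center for which the method cannot detect angular asymmetry. Moreover, the asymptotics at $0$ and $\infty$ that you identify as the main obstacle are not what the paper needs: Proposition~\ref{p:beginning} starts the sphere via the Harnack inequality (Proposition~\ref{p:Harnack}), Lemma~\ref{p:pn0}, and a narrow-domain energy estimate, not via sharp two-sided decay rates.

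The paper instead centers the moving sphere at an arbitrary $X_0=(x_0,0)$ with $x_0\ne 0$. Off-center Kelvin transformations do \emph{not} leave the Hardy term invariant, so the linear coefficient acquires the factor $\left(\rho/|x-x_0|\right)^{4s}|x_{\rho,x_0}|^{-2s}$ compared to $|x|^{-2s}$; Lemma~\ref{l:kxx0l} proves the sign inequalities between these two quantities needed to make the maximum-principle comparison go through on each side of the sphere (and $\lambda\ge0$ enters exactly here). The main result, Proposition~\ref{p:n+1symmetry}, is that the critical radius satisfies $\bar\rho(X_0)=|X_0|$ for every $X_0$. Taking $x_0=R\mathbf e$, $\rho=R-l$, and $R\to\infty$ turns the sphere comparison into the plane reflection inequality $U(x,t)\ge U(x-2(\langle x,\mathbf e\rangle -l)\mathbf e,t)$, which yields both radial symmetry and monotonicity. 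If you wish to salvage your proof, this off-center version of the moving-sphere method --- together with a comparison lemma for the transformed Hardy coefficient such as Lemma~\ref{l:kxx0l} --- is the ingredient you would need to add.
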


The proof of Theorem \ref{t:regulatiry} is based on a combination of the Cafferelli-Silvestre's extension and the method of moving spheres.
The method of moving planes or moving spheres is a strong technique in studying the symmetry and monotonicity of solutions of various elliptic differential equations and some conformal invariant integral equations. See, for example, \cite{Serrin:ARMA71,GNN:CMP79,CGSCPAM1989,BerestyckiNirenberg1991,ChenLi:Duke91, LiZhuDukeMath1995,Li:Invent96,LiYanYanJEMS2004,LiYanYan2006,ChenLiOUCPAM2006,JinQinianLiyanyanXuADE2008} and the references therein.
In \cite{JinLiXiongJEMS2014,CJSXARMA2014}, the authors applied the method of moving spheres to fractional Yamabe equations successfully by using the Cafferelli-Silvestre's extension technique. In \cite{ChenLiLi2014, barrios2014moving, JarohsWethDCDS2014, jarohs2014symmetry, felmer2014radial, DiMoPeSc15}, the authors developed a direct method of moving planes to prove the symmetry and existence of solutions to some semi-linear elliptic equations involving fractional Laplacian. Both of these approaches need to recover concrete maximum principles as in the classical case.

When using the method of moving spheres (or moving planes), each problem has its own difficulties. In our problem, equation (\ref{e:mail-extention-c}) is not exactly conformally invariant as the ones in \cite{JinLiXiongJEMS2014,CJSXARMA2014} ($\lambda=0$), that is, after Kelvin transformation, (\ref{e:main3}) becomes
\begin{equation}\label{e:ktfl}
(-\Delta)^su_{x_0,\rho}-\left(\frac{\rho}{|x-x_0|}\right)^{4s}\frac{\lambda}{|x_{\rho,x_0}|^{2s}}u_{x_0,\rho}
=u_{x_0,\rho}^{\frac{n+2s}{n-2s}},
\end{equation}
where $x_0$ is any fixed point in $\mathbf R^n\setminus\{0\}$, $u_{x_0,\rho}$ is the Kelvin transformation of $u$:
\begin{equation*}
u_{x_0,\rho}(\xi)=\left(\frac{\rho}{|\xi-x_0|}\right)^{n-2s}u\left(x_0+\frac{\rho^2(\xi-x_0)}{|\xi-x_0|^2}\right),\quad \xi\in \mathbf R^{n}\setminus \{x_0\}.
\end{equation*}
For the detailed proof of this equation, see Appendix \ref{s:kt} below. The second term in the left side of (\ref{e:ktfl}) makes the verification of main ingredients of the method of moving spheres more complicated. For more details, see Section \ref{s:symmetry}.

As a consequence of Theorem \ref{t:regulatiry}, we have
\begin{cor}\label{c:fractional-symmetry}
Let $\lambda\ge 0$, if $u$ is a nonnegative weak solution to (\ref{e:main3}), then $u$ is smooth and positive in $\mathbf R^{n}\setminus\{0\}$, radial symmetric about the origin and strictly decreasing in radial directions.
\end{cor}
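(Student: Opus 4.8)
The plan is to deduce Corollary~\ref{c:fractional-symmetry} from Theorem~\ref{t:regulatiry} by passing between $u$ and its Caffarelli--Silvestre extension. Starting from a nonnegative weak solution $u\in H^s(\mathbf R^n)$ of \eqref{e:main3}, I would let $U$ be its $s$-harmonic extension to $\mathbf R^{n+1}_+$, i.e. the convolution of $u$ with the Poisson-type kernel of \cite{CaSiCPDE2007}. Since that kernel is nonnegative, $U\ge 0$; since $u\in H^s(\mathbf R^n)$, one has $U\in H^1(\mathbf R^{n+1}_+,t^{1-2s})$ with trace $U(\cdot,0)=u$, and the extension identity $-\lim_{t\to 0^+}t^{1-2s}\partial_t U(x,t)=\kappa_s(-\Delta)^su(x)$ together with the equation satisfied by $u$ shows that $U$ is a nonnegative weak solution of \eqref{e:mail-extention-c}. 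At this step one has to check the integrability implicit in Definition~\ref{d:weak-solutions}: the fractional Sobolev embedding $H^s(\mathbf R^n)\hookrightarrow L^{2^*(s)}(\mathbf R^n)$ gives $u\in L^{p+1}(\mathbf R^n)$ with $p=2^*(s)-1$, and the Hardy-type inequality \eqref{e:hardy-ineq} gives $u\in L^2(\mathbf R^n,\tfrac1{|x|^{2s}})$, so both boundary terms in \eqref{e:mail-extention-c} are admissible.

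With $U$ now an admissible nonnegative weak solution of \eqref{e:mail-extention-c}, Theorem~\ref{t:regulatiry} applies and yields that $U>0$, that $U\in C^\infty(\overline{\mathbf R^{n+1}_+}\setminus\{0\})$, and that for every fixed $t\ge 0$ the map $x\mapsto U(x,t)$ is radially symmetric about the origin and decreasing in the radial direction. Setting $t=0$, the trace $u=U(\cdot,0)$ inherits each of these: positivity on $\mathbf R^n\setminus\{0\}$ (the punctured hyperplane sits inside $\overline{\mathbf R^{n+1}_+}\setminus\{0\}$), $C^\infty$ regularity on $\mathbf R^n\setminus\{0\}$ (a function smooth up to the boundary restricts to a smooth function of the boundary variable), and radial symmetry about $0$ together with monotonicity in $|x|$.

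The one point beyond a literal restriction is the strict monotonicity. If $u$ coincided with a constant $c>0$ on an annulus $\{r_1\le|x|\le r_2\}$, then the radial derivative $\partial_r U$ (with $r=|x|$) of the extension, which is $\le 0$ throughout $\mathbf R^{n+1}_+$ because every slice $U(\cdot,t)$ is radially decreasing, would vanish on the boundary segment $\{(x,0):r_1\le|x|\le r_2\}$ while solving the linearized degenerate-elliptic problem there; the Hopf boundary lemma / strong maximum principle for $\mathrm{div}(t^{1-2s}\nabla\,\cdot\,)$ then forces $\partial_r U\equiv 0$ near that segment, which is incompatible with $U>0$ and $U(x,t)\to 0$ at infinity. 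Hence $u$ is strictly decreasing in $|x|$. I expect essentially no genuine obstacle: all the work is in Theorem~\ref{t:regulatiry}, and the only items needing a line of verification are the function-space bookkeeping that makes the extension an admissible weak solution of \eqref{e:mail-extention-c} and the short maximum-principle argument promoting ``decreasing'' to ``strictly decreasing''.
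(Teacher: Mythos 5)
Your construction of the extension and the reduction to Theorem~\ref{t:regulatiry} is exactly what the paper does, so the first two paragraphs are fine modulo bookkeeping. The genuine difference --- and the gap --- is in how you promote ``decreasing'' to ``strictly decreasing.''

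The paper argues directly at the level of the nonlocal equation \eqref{e:main3}. If $u$ were constant on an annulus $A_{r_1,r_2}$, one evaluates $(-\Delta)^su$ at two radially aligned points $\eta_1,\eta_2\in A_{r_1,r_2}$ with $|\eta_1|<|\eta_2|$ via the singular-integral representation; the contribution from the annulus vanishes, the contribution from $B_{r_1}$ is monotone in the evident direction, and the contribution from $\mathbf R^n\setminus\overline B_{r_2}$ is handled by reflecting across the perpendicular bisector of $[\eta_1,\eta_2]$ and using that $u$ is nonincreasing. This gives $(-\Delta)^su(\eta_1)<(-\Delta)^su(\eta_2)$, while the right-hand side of \eqref{e:main3} satisfies the opposite weak inequality because $u(\eta_1)=u(\eta_2)$ and $|\eta_1|<|\eta_2|$; contradiction. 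No Hopf lemma, no unique continuation, only the integral formula for $(-\Delta)^s$.

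Your route --- differentiate in $r$, note $\partial_r U\le 0$ vanishes on a boundary segment, and invoke a Hopf / strong maximum principle for $\mathrm{div}(t^{1-2s}\nabla\,\cdot\,)$ --- is a genuinely different idea, but as sketched it has several holes. First, $\partial_r U$ does \emph{not} satisfy $\mathrm{div}(t^{1-2s}\nabla\,\cdot\,)=0$: writing $U(x,t)=V(|x|,t)$ and differentiating the radial form of the equation, $W:=\partial_r V$ solves $\mathrm{div}(t^{1-2s}\nabla W)=\tfrac{n-1}{r^2}t^{1-2s}W$, so $-\partial_r U\ge 0$ is a \emph{subsolution}, whereas the maximum-principle tools the paper records (Lemma~\ref{l:non-negative}, Lemma~\ref{p:pn0}) are for supersolutions and do not apply directly. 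Second, differentiating the nonlinear boundary condition produces the source term $-\tfrac{2s\lambda}{|x|^{2s+1}}u$; this has a helpful sign when $\lambda>0$, but it vanishes when $\lambda=0$, and then you need a genuine Hopf lemma for the degenerate mixed boundary value problem with the zeroth-order term $\tfrac{n-1}{r^2}t^{1-2s}$, which is not in the paper's toolkit. Third, the local conclusion ``$\partial_r U\equiv 0$ near that segment'' does not by itself contradict $U>0$ and decay at infinity; you would also need a unique-continuation step to globalize the vanishing before the decay can bite. A Hopf-lemma proof can likely be made to work with extra input from the literature, but it is far from a one-line restriction, and the paper's direct nonlocal comparison bypasses all of these issues.
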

\begin{remark}
When $s=1$, this kind of results was obtained in \cite{Terrachini:ADE96,JinQinianLiyanyanXuADE2008}.
\end{remark}

In the classical case $s=1$, when $\lambda$ is greater than the Hardy constant $(n-2)^2/4$, there is no positive solution to (\ref{e:main3}), see, for example, \cite{Terrachini:ADE96,JinQinianLiyanyanXuADE2008}. In the fractional case, we have a similar nonexistence result as follows.
\begin{theorem}\label{t:lglne}
Assume that  $\lambda\ge \Lambda_{n,s}$, there is no nonnegative nontrivial solution of (\ref{e:mail-extention-c}) in $\dot{H}^1(\mathbf R^{n+1}_+,t^{1-2s})$.
\end{theorem}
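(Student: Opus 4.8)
The plan is to exploit the sharp Hardy inequality \eqref{e:hardy-ineq} together with the trace characterization of $\dot H^s(\mathbf R^n)$ coming from the Caffarelli--Silvestre extension. Recall that for $U\in \dot H^1(\mathbf R^{n+1}_+,t^{1-2s})$ one has the trace inequality
\[
\kappa_s^{-1}\inf_{V}\int_{\mathbf R^{n+1}_+}t^{1-2s}|\nabla V|^2\,dx\,dt \;=\; \|(-\Delta)^{s/2}u\|_{L^2(\mathbf R^n)}^2 \;=\;\int_{\mathbf R^n}|\xi|^{2s}|\hat u(\xi)|^2\,d\xi,
\]
where $u=U(\cdot,0)$ is the trace and the infimum is attained by the harmonic (with weight $t^{1-2s}$) extension. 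In particular, for the actual solution $U$ we have $\kappa_s^{-1}\int_{\mathbf R^{n+1}_+}t^{1-2s}|\nabla U|^2 \ge \int_{\mathbf R^n}|\xi|^{2s}|\hat u|^2$, and combining this with Herbst's inequality \eqref{e:hardy-ineq} gives
\[
\int_{\mathbf R^{n+1}_+}t^{1-2s}|\nabla U|^2\,dx\,dt \;\ge\; \kappa_s\,\Lambda_{n,s}\int_{\mathbf R^n}\frac{|u|^2}{|x|^{2s}}\,dx .
\]

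The next step is to test the weak formulation of \eqref{e:mail-extention-c} against $U$ itself. Since $U\in\dot H^1(\mathbf R^{n+1}_+,t^{1-2s})$ and $U$ is nonnegative, this is an admissible test function (a density/truncation argument may be needed to justify it rigorously, using the finiteness of the energy and of $\int |u|^{2^*(s)}$ from Sobolev embedding). This yields the energy identity
\[
\int_{\mathbf R^{n+1}_+}t^{1-2s}|\nabla U|^2\,dx\,dt \;=\; \kappa_s\lambda\int_{\mathbf R^n}\frac{u^2}{|x|^{2s}}\,dx \;+\;\kappa_s\int_{\mathbf R^n}u^{\frac{2n}{n-2s}}\,dx .
\]
Plugging the Hardy lower bound into the left side and rearranging, we obtain
\[
\kappa_s(\Lambda_{n,s}-\lambda)\int_{\mathbf R^n}\frac{u^2}{|x|^{2s}}\,dx \;\le\; -\,\kappa_s\int_{\mathbf R^n}u^{\frac{2n}{n-2s}}\,dx \;\le\;0.
\]
When $\lambda\ge\Lambda_{n,s}$ the left-hand side is nonnegative, forcing $\int u^{2^*(s)}=0$, hence $u\equiv 0$; then the energy identity gives $\int t^{1-2s}|\nabla U|^2=0$, so $U$ is constant, and being in $\dot H^1(\mathbf R^{n+1}_+,t^{1-2s})$ with zero trace forces $U\equiv 0$.

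The main obstacle is the rigorous justification of using $U$ as a test function, i.e. showing that $U$ lies in the space of admissible test functions and that all three integrals are finite and the integration by parts is valid up to the boundary with no contribution at infinity or at the singularity $x=0$. This requires an approximation argument: cut off $U$ near $t=0$, near $x=0$, and near infinity, verify that the boundary terms generated by the cutoffs vanish in the limit (using the energy finiteness, the Hardy inequality to control the term near $0$, and Sobolev to control the critical term), and pass to the limit. A secondary technical point is that \eqref{e:hardy-ineq} is stated for $u\in C_0^\infty(\mathbf R^n)$, so one must extend it to the trace $u$ by density in the appropriate homogeneous Sobolev norm, which is standard given that both sides are continuous in that norm.
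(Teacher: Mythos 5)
Your proposal does not work: there is a sign error that hides the fact that the naive energy estimate cannot give the result. Testing the weak formulation of \eqref{e:mail-extention-c} against $U$ itself and using the Hardy lower bound
\[
\int_{\mathbf R^{n+1}_+}t^{1-2s}|\nabla U|^2 \;\ge\; \kappa_s\Lambda_{n,s}\int_{\mathbf R^n}\frac{u^2}{|x|^{2s}}
\]
in the energy identity
\[
\int_{\mathbf R^{n+1}_+}t^{1-2s}|\nabla U|^2 \;=\; \kappa_s\lambda\int_{\mathbf R^n}\frac{u^2}{|x|^{2s}} + \kappa_s\int_{\mathbf R^n}u^{2^*(s)}
\]
yields
\[
\kappa_s(\Lambda_{n,s}-\lambda)\int_{\mathbf R^n}\frac{u^2}{|x|^{2s}} \;\le\; \kappa_s\int_{\mathbf R^n}u^{2^*(s)},
\]
with a plus sign on the right, not a minus as you wrote. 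When $\lambda\ge\Lambda_{n,s}$, the left-hand side is \emph{nonpositive} (you wrote nonnegative, a second sign slip), while the right-hand side is nonnegative; the inequality is therefore vacuous and produces no contradiction. In other words, Hardy's inequality controls the energy from below and the nonlinear term has the \emph{wrong} sign for this method. The two sign errors conspire to make the argument look like it closes, but it does not, and no amount of approximation or cutting off can repair this: the obstruction is structural, not technical.

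The paper's proof works around this by testing against a function that is \emph{not} in $\dot H^1(\mathbf R^{n+1}_+,t^{1-2s})$: the homogeneous profile $W_1(X)=|X|^{(2s-n)/2}\psi_1(X/|X|)$, where $\psi_1>0$ is the first eigenfunction of the half-sphere eigenvalue problem \eqref{e:sextension}. Because $W_1$ solves $\mathrm{div}(t^{1-2s}\nabla W_1)=0$ with boundary flux exactly $\Lambda_{n,s}\kappa_s W_1/|x|^{2s}$, pairing the two equations over an annulus $\mathcal A_{r,R}^+$ produces the identity \eqref{e:llwu}, whose left-hand side equals $\kappa_s\int_{A_{r,R}}\bigl[(\lambda-\Lambda_{n,s})|x|^{-2s}W_1 U + W_1 U^{\frac{n+2s}{n-2s}}\bigr]$ and is \emph{strictly positive} (since $U>0$ by the Harnack inequality and $W_1>0$). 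The rest of the work is to show that the spherical boundary terms tend to $0$ along sequences $r_k\to 0$, $R_k\to\infty$, using $U\in\dot H^1$ and the weighted Sobolev embedding (Lemma \ref{l:ewss}). You would need to introduce $W_1$ (or an equivalent optimizer of Hardy's inequality) as the comparison function; testing against $U$ alone simply cannot see the failure of positivity of the Hardy quadratic form for $\lambda\ge\Lambda_{n,s}$.
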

Here $\dot{H}^1(\mathbf R^{n+1}_+,t^{1-2s})$ denotes the completion of the set $C^{\infty}_0(\overline{\mathbf R^{n+1}_+})$ under the norm
\begin{equation*}
\|U\|_{\dot{H}^1(\mathbf R^{n+1}_+,t^{1-2s})}=\left(\int_{\mathbf R^{n+1}_+}t^{1-2s}|\nabla U|\right)^{\frac{1}{2}}.
\end{equation*}

The key point of verifying this theorem is constructing proper test functions by an eigenvalue problem on the up-half unit sphere in $\mathbf R^{n+1}_+$. To be more precisely, let $\mathbf S^n$ be the unit $n$-dimensional sphere in $\mathbf R^{n+1}$ and
\begin{equation*}
\mathbf S^n_+=\{\theta=(\theta_1,\cdots,\theta_n,\theta_{n+1})\in \mathbf S^n:\,\theta_{n+1}>0\}.
\end{equation*}
From the compactness of $\mathbf S^n_+$ and \cite[Theorem 6.16]{Salsa08},
there exists some positive function $\psi_1$ in $H^1(\mathbf S^n_+,\theta_{n+1}^{1-2s})$ satisfying
\begin{equation}\label{e:sextension}
\left\{\begin{array}{cc}
         {\rm div}_{\mathbf S^n}(\theta_{n+1}^{1-2s}\nabla_{\mathbf S^n}\psi_1)=\left(\frac{2s-n}{2}\right)^2\theta_{n+1}^{1-2s}\psi_1 & \mbox{ in }\mathbf S^n_+,\\
          -\displaystyle\lim_{\theta_{n+1}\to 0}\theta_{n+1}^{1-2s}\nabla_{\mathbf S^n}\psi_1\cdot \mathbf e_{n+1}=\Lambda_{n,s}\kappa_s\psi_1, & \mbox{ on }\partial\mathbf S^n_+.
       \end{array}
\right.
\end{equation}
Here $\mathbf e_{n + 1} = (0,\cdot\cdot\cdot, 0, 1)\in \mathbf S^n_+$.
This yields the nonexistence for $\lambda\ge \Lambda_{n,s}$ by choosing $|X|^{\frac{2s-n}{2}}\psi_1\left(\frac{X}{|X|}\right)$ as a test function for problem (\ref{e:mail-extention-c}). For more details, see Section \ref{sb:proof-1.5} below.

As a consequence, we have
\begin{cor}
Assume that $\lambda\ge \Lambda_{n,s}$, then there is no nonnegative nontrivial solution of (\ref{e:main3}) in $\dot{H}^s(\mathbf R^n)$.
\end{cor}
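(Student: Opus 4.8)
The plan is to deduce this corollary from Theorem~\ref{t:lglne} by lifting a putative solution on $\mathbf R^n$ to the half-space $\mathbf R^{n+1}_+$ through the Caffarelli--Silvestre extension. Suppose, for contradiction, that $u\in\dot H^s(\mathbf R^n)$ is a nonnegative nontrivial weak solution of (\ref{e:main3}). The first step is to introduce its $s$-harmonic extension $U(x,t)=(P_s(\cdot,t)*u)(x)$, where $P_s$ denotes the Caffarelli--Silvestre Poisson kernel. By \cite{CaSiCPDE2007}, $U$ satisfies ${\rm div}(t^{1-2s}\nabla U)=0$ in $\mathbf R^{n+1}_+$, has trace $U(\cdot,0)=u$ on $\partial\mathbf R^{n+1}_+$, obeys $-\lim_{t\to0^+}t^{1-2s}\partial_tU(x,t)=\kappa_s(-\Delta)^su(x)$ in the weak sense, and the extension map is, up to the multiplicative constant $\kappa_s$, an isometry from $\dot H^s(\mathbf R^n)$ onto $\dot H^1(\mathbf R^{n+1}_+,t^{1-2s})$; in particular $U\in\dot H^1(\mathbf R^{n+1}_+,t^{1-2s})$.

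The second step is to verify that $U$ is a nonnegative nontrivial weak solution of the extended problem (\ref{e:mail-extention-c}). Nonnegativity is immediate since the Poisson kernel $P_s$ is strictly positive and $u\ge0$, and nontriviality follows because the trace of $U$ is $u\not\equiv0$. To check the weak formulation, test the degenerate equation against $\Phi\in C_0^\infty(\overline{\mathbf R^{n+1}_+})$: integration by parts gives $\int_{\mathbf R^{n+1}_+}t^{1-2s}\nabla U\cdot\nabla\Phi=\kappa_s\int_{\mathbf R^n}(-\Delta)^su\,\Phi(\cdot,0)$, and since $u$ solves (\ref{e:main3}) the right-hand side equals $\kappa_s\int_{\mathbf R^n}\left(\frac{\lambda}{|x|^{2s}}u+u^{\frac{n+2s}{n-2s}}\right)\Phi(\cdot,0)$, which is exactly the weak form of (\ref{e:mail-extention-c}). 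The boundary integrals are finite: by the Sobolev embedding $\dot H^s(\mathbf R^n)\hookrightarrow L^{2^*(s)}(\mathbf R^n)$ one has $u^{\frac{n+2s}{n-2s}}\in L^{(2^*(s))'}(\mathbf R^n)$, and by the Hardy--Herbst inequality (\ref{e:hardy-ineq}) one has $\int_{\mathbf R^n}|x|^{-2s}u^2<\infty$.

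With a nonnegative nontrivial $U\in\dot H^1(\mathbf R^{n+1}_+,t^{1-2s})$ solving (\ref{e:mail-extention-c}) at hand and $\lambda\ge\Lambda_{n,s}$, Theorem~\ref{t:lglne} forces $U\equiv0$, hence $u\equiv0$, a contradiction. The only delicate point is the first step, namely that the extension of a $\dot H^s$ function lies in the homogeneous weighted space $\dot H^1(\mathbf R^{n+1}_+,t^{1-2s})$ and that the weak Neumann-type trace identity holds in the precise form demanded by the definition of weak solution used in Theorem~\ref{t:lglne}; but this is the standard content of Caffarelli--Silvestre extension theory and is already used implicitly elsewhere in the paper, so no new estimates are required.
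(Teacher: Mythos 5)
Your proof is correct and matches the intended argument: the paper states this corollary as an immediate consequence of Theorem \ref{t:lglne}, and the mechanism is exactly the one you describe — lift $u$ to its Caffarelli--Silvestre extension $U=\mathcal P_s*u$, use (\ref{e:gtrace}) and Lemma \ref{l:minimal-eu} to see $U\in\dot H^1(\mathbf R^{n+1}_+,t^{1-2s})$ with the right weak boundary condition, and then invoke Theorem \ref{t:lglne} to force $U\equiv0$, hence $u\equiv0$.
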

Here and what follows, $\dot{H}^s(\mathbf R^n)$ denotes the completion of $C_0^{\infty}(\mathbf R^n)$ with respect to the norm $\|\,\cdot\,\|_{\dot{H}^s(\mathbf R^n)}$ induced by the scalar product
\begin{equation}\label{e:inner-product-f}
\int_{\mathbf R^n}|\xi|^{2s}\hat u(\xi)\hat v(\xi)d\xi,\quad u,\,v\in C_0^{\infty}(\mathbf R^n).
\end{equation}
We denote the corresponding inner product by $\langle\cdot ,\cdot\rangle_{\dot{H}^s(\mathbf R^n)}$.

Finally, we will prove the following existence result.
\begin{theorem}\label{t:existence-6}
If $0\le \lambda<\Lambda_{n,s}$, then there exists a positive solution to (\ref{e:mail-extention-c}) in $\dot{H}^1(\mathbf R^{n+1}_+,t^{1-2s})$.
\end{theorem}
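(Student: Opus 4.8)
The plan is to obtain the positive solution of~(\ref{e:mail-extention-c}) as a minimizer of the associated Rayleigh-type quotient on $\dot H^1(\mathbf R^{n+1}_+,t^{1-2s})$. Define, for $U\in\dot H^1(\mathbf R^{n+1}_+,t^{1-2s})\setminus\{0\}$,
\begin{equation*}
Q_\lambda(U)=\frac{\displaystyle\int_{\mathbf R^{n+1}_+}t^{1-2s}|\nabla U|^2\,dx\,dt-\kappa_s\lambda\int_{\mathbf R^n}\frac{|U(x,0)|^2}{|x|^{2s}}\,dx}{\displaystyle\left(\int_{\mathbf R^n}|U(x,0)|^{2^*(s)}\,dx\right)^{2/2^*(s)}},
\end{equation*}
and set $S_\lambda=\inf Q_\lambda$, the infimum being taken over the indicated set. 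The first step is to check that $Q_\lambda$ is well defined and $S_\lambda>0$: this uses the trace/Sobolev inequality $\big(\int_{\mathbf R^n}|U(\cdot,0)|^{2^*(s)}\big)^{2/2^*(s)}\le C\int_{\mathbf R^{n+1}_+}t^{1-2s}|\nabla U|^2$ together with the Herbst--Hardy inequality~(\ref{e:hardy-ineq}) (lifted to the extension via the trace), which gives $\int t^{1-2s}|\nabla U|^2-\kappa_s\lambda\int\frac{|U(\cdot,0)|^2}{|x|^{2s}}\ge\big(1-\lambda/\Lambda_{n,s}\big)\int t^{1-2s}|\nabla U|^2\ge 0$, and is strictly positive on nonzero $U$ precisely because $\lambda<\Lambda_{n,s}$. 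Hence the numerator is an equivalent norm squared on $\dot H^1$, and $S_\lambda\ge(1-\lambda/\Lambda_{n,s})S_0>0$ where $S_0$ is the sharp fractional Sobolev--trace constant.

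The second step is to produce a minimizer. I would take a minimizing sequence $U_k$, normalized by $\int_{\mathbf R^n}|U_k(\cdot,0)|^{2^*(s)}=1$, so that it is bounded in $\dot H^1(\mathbf R^{n+1}_+,t^{1-2s})$; pass to a weakly convergent subsequence $U_k\rightharpoonup U$. The issue is that the embedding into $L^{2^*(s)}(\mathbf R^n)$ is not compact because of the dilation and translation invariances; I would run the concentration-compactness argument of P.-L. Lions (in the version adapted to the Caffarelli--Silvestre extension, as in Palatucci--Pisante or Tan, and in the subcritical-$\lambda$ setting as in Brezis--Nirenberg type arguments). Because $\lambda>0$, the modified functional is strictly below the free one, $S_\lambda<S_0$ when $\lambda>0$ (and $S_\lambda=S_0$ only at $\lambda=0$, but even then existence holds since $S_0$ is attained by the Aubin--Talenti/fractional bubble); this strict inequality rules out the dichotomy and vanishing alternatives, forcing compactness up to scaling and translation, so a rescaled subsequence converges strongly and the limit $U$ attains $S_\lambda$ with $\int|U(\cdot,0)|^{2^*(s)}=1$.

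The third step is to convert the minimizer into a solution of~(\ref{e:mail-extention-c}). By Lagrange multipliers, $U$ satisfies $\operatorname{div}(t^{1-2s}\nabla U)=0$ in $\mathbf R^{n+1}_+$ and $-\lim_{t\to0^+}t^{1-2s}\partial_tU=\mu(\kappa_s\lambda|x|^{-2s}U(x,0)+|U(x,0)|^{2^*(s)-2}U(x,0))$ for some $\mu>0$ (positivity of $\mu$ comes from testing against $U$ and using $S_\lambda>0$); replacing $U$ by $\mu^{1/(2^*(s)-2)}U$ makes the multiplier equal to one. To get nonnegativity, replace $U_k$ by $|U_k|$ at the outset—this does not increase the numerator (Kato-type inequality for the weighted gradient: $\big||\nabla|U|\big|\le|\nabla U|$) and does not change the denominator—so the minimizer may be taken nonnegative; then strict positivity and smoothness in $\overline{\mathbf R^{n+1}_+}\setminus\{0\}$ follow from Theorem~\ref{t:regulatiry} (part (1)), since $U$ is a nonnegative weak solution of~(\ref{e:mail-extention-c}).

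The main obstacle I anticipate is the second step: establishing the strict inequality $S_\lambda<S_0$ for $0<\lambda<\Lambda_{n,s}$ and running the concentration-compactness analysis cleanly in the degenerate-weight setting. One must control the behavior of minimizing sequences near the singular point $x=0$ (where the Hardy potential is critical) and at infinity simultaneously; the profile decomposition must account for bubbles concentrating at $0$ (which interact with the Hardy term) as well as bubbles escaping to infinity or spreading out (which see only the free Sobolev term). The strict inequality is what separates the attainable case from the non-attainable one, and it is obtained by plugging a suitably truncated and rescaled Aubin--Talenti bubble centered at the origin into $Q_\lambda$ and expanding: the Hardy term contributes a strictly negative correction of the right order in the scaling parameter, beating the error terms, precisely because $\lambda>0$ and the bubble has the critical decay rate $|x|^{-(n-2s)}$ matching the weight $|x|^{-2s}$ in dimension-independent fashion. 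Carrying out this expansion carefully, with the correct truncation, is the technical heart of the proof.
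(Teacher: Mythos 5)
Your proposal is correct in outline but takes a genuinely different route from the paper, and you misassess where the real difficulty lies. You propose the Brezis--Nirenberg/concentration-compactness strategy: show $S_\lambda<S_0$, then use that strict gap to kill dichotomy, vanishing, and escape of mass. But the strict inequality $S_\lambda<S_0$ for $0<\lambda<\Lambda_{n,s}$ is \emph{not} a delicate expansion that ``beats error terms of the right order in the scaling parameter''---it is essentially immediate. The Hardy potential $|x|^{-2s}$ scales \emph{identically} to the critical Sobolev trace norm, so for the Aubin--Talenti bubble $u_a(x)=c\big(\tfrac{a}{a^2+|x|^2}\big)^{(n-2s)/2}$ centered at the origin, the ratio $\int u_a^2|x|^{-2s}\,dx\,/\,\|u_a\|_{2^*(s)}^2$ is a fixed positive constant independent of $a$ (one checks the powers of $a$ cancel exactly, and both convergences at $0$ and at $\infty$ hold since $0<2s<n$). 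Plugging the Caffarelli--Silvestre extension of $u_a$ into $Q_\lambda$ therefore gives $S_\lambda\le S_0-\kappa_s\lambda\cdot\mathrm{const}<S_0$ with no truncation and no $\varepsilon$-expansion whatsoever; the ``technical heart'' you identify is empty. What \emph{would} require real work in your scheme is the part you only gesture at: running a Lions-type profile decomposition for the degenerate weight $t^{1-2s}$ on the half-space, showing that the only non-compact symmetry is the dilation $\mathcal R$ fixing the origin (there is no translation invariance because of $|x|^{-2s}$), and ruling out profiles concentrating at a nonzero point or at infinity using $S_\lambda<S_0$. That analysis is plausible but nontrivial, and you do not carry it out.

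By contrast, the paper avoids profile decomposition entirely. After showing (Lemma~\ref{l:qukv}, Lemma~\ref{l:nontrivial-limit}) that a minimizing sequence with a \emph{nontrivial} weak limit converges strongly, the key step (Lemma~\ref{l:trivial-limit}) handles the trivial-weak-limit case by passing to the trace quotient $\tilde\beta(\lambda)$ on $\dot H^s(\mathbf R^n)$ and applying the argument of \cite[Theorem 1.5]{DiMoPeSc15}: replace $u_k$ by its Schwarz symmetrization (which does not increase $\|u_k\|_{\dot H^s}$, does not decrease $\int u_k^2|x|^{-2s}$, and preserves $\|u_k\|_{2^*(s)}$, so the symmetrized sequence is still minimizing), then use the improved Sobolev inequality of \cite[Theorem 1.1]{PP:CVPDE14} to read off a nontrivial concentration scale, and apply the dilation of Lemma~\ref{l:rinvariant} to produce a strongly convergent minimizing sequence with a nontrivial limit, which is then extended back to $\dot H^1(\mathbf R^{n+1}_+,t^{1-2s})$ via Lemma~\ref{l:minimal-eu}. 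This rearrangement argument removes the translation degeneracy outright and reduces the only loss of compactness to the one-parameter dilation group, with the improved embedding supplying the quantitative lower bound that your concentration-compactness alternative would have to labor for. Your remaining steps (Lagrange multiplier with normalization of the multiplier; passing to $|U|$ via the Kato-type gradient identity; invoking Theorem~\ref{t:regulatiry}(1) for positivity and smoothness) are fine and agree in spirit with the paper.
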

In order to prove this result, it is natural to analyze the corresponding minimization problem, see (\ref{e:minimization}).
Firstly, we prove that any minimization sequence $\{U_k\}$ with nontrivial weak limit $V$ yields that $\{U_k\}$ approaches to $V$ with respect to the $H^1(\mathbf R^{n+1}_+,t^{1-2s})$-norm topology. Thus the existence follows. Secondly, inspired by \cite[Proof of Theorem 1.5]{DiMoPeSc15}, we show that if a minimization sequence $\{U_k\}$ has a trivial weak limit, then after rearrangement and some proper rescaling of $\{U_k\}$, there exists a minimization sequence $\{\hat U_k\}$ with nontrivial weak limit.

From Corollary \ref{c:fractional-symmetry} and Theorem \ref{t:existence-6}, we have

\begin{cor}\label{c:f-exitence-s}
If $0\le \lambda< \Lambda_{n,s}$, then (\ref{e:main3}) has a positive solution in $\dot{H}^s(\mathbf R^n)$ and it is smooth and positive in $\mathbf R^{n}\setminus\{0\}$, radial symmetric about the origin and strictly decreasing in radial directions.
\end{cor}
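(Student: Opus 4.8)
The plan is to read off Corollary~\ref{c:f-exitence-s} as the conjunction of two results already in hand: the existence statement of Theorem~\ref{t:existence-6} for the extended problem (\ref{e:mail-extention-c}), and the regularity/symmetry statement of Corollary~\ref{c:fractional-symmetry} for the nonlocal problem (\ref{e:main3}). The only genuine work is to pass from a solution of the extended problem to a solution of (\ref{e:main3}) that lives in the right function space, after which everything is a citation.

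First I would invoke Theorem~\ref{t:existence-6}: since $0\le\lambda<\Lambda_{n,s}$, there is a positive $U\in\dot{H}^1(\mathbf R^{n+1}_+,t^{1-2s})$ that is a weak solution of (\ref{e:mail-extention-c}) in the sense of Definition~\ref{d:weak-solutions}. Because $U$ satisfies ${\rm div}(t^{1-2s}\nabla U)=0$ in $\mathbf R^{n+1}_+$, it is, up to normalization, the Caffarelli--Silvestre extension of its trace $u:=U(\cdot,0)$; by the extension/trace identity of \cite{CaSiCPDE2007}, $\|u\|_{\dot{H}^s(\mathbf R^n)}$ is a constant multiple of $\|U\|_{\dot{H}^1(\mathbf R^{n+1}_+,t^{1-2s})}$, hence finite, so $u\in\dot{H}^s(\mathbf R^n)$. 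Using the same extension formula — the Dirichlet-to-Neumann map of $U$ is $\kappa_s(-\Delta)^s u$ — the weak formulation of (\ref{e:mail-extention-c}) restricted to test functions independent of $t$ near the boundary becomes exactly the weak formulation of (\ref{e:main3}) for $u$; and $u\ge 0$ (indeed $u>0$ on $\mathbf R^n\setminus\{0\}$) because $U$ is positive by Theorem~\ref{t:regulatiry}, which applies as $\lambda\ge 0$. Thus $u$ is a nonnegative weak solution of (\ref{e:main3}) in $\dot{H}^s(\mathbf R^n)$.

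Next I would apply Corollary~\ref{c:fractional-symmetry} to this $u$: being a nonnegative weak solution of (\ref{e:main3}), it is smooth and positive in $\mathbf R^n\setminus\{0\}$, radially symmetric about the origin, and strictly decreasing in radial directions. Together with the membership $u\in\dot{H}^s(\mathbf R^n)$ established above, this yields every assertion of the corollary.

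The main (and essentially only) obstacle is the bookkeeping in the passage $U\mapsto u=U(\cdot,0)$: one must be sure that the trace operator genuinely maps $\dot{H}^1(\mathbf R^{n+1}_+,t^{1-2s})$ into the homogeneous space $\dot{H}^s(\mathbf R^n)$ — this uses the density of $C_0^\infty(\overline{\mathbf R^{n+1}_+})$ in the former and the sharp trace inequality with constant $\kappa_s$ — and that the two notions of weak solution, Definition~\ref{d:weak-solutions} for (\ref{e:mail-extention-c}) and the one implicit in Corollary~\ref{c:fractional-symmetry} for (\ref{e:main3}), correspond under $U\leftrightarrow U(\cdot,0)$. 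Both are by now standard consequences of the Caffarelli--Silvestre machinery developed in \cite{CaSiCPDE2007}, so no new estimate is required; the rest is a direct appeal to Theorems~\ref{t:existence-6} and~\ref{t:regulatiry} and Corollary~\ref{c:fractional-symmetry}.
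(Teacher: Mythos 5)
Your proposal is correct and takes exactly the route the paper intends: invoke Theorem~\ref{t:existence-6} for a positive extended solution $U$, pass to the trace $u=U(\cdot,0)\in\dot{H}^s(\mathbf R^n)$ via the Caffarelli--Silvestre correspondence (cf.\ Lemma~\ref{l:eminimal}), and then cite Corollary~\ref{c:fractional-symmetry} for smoothness, positivity, radial symmetry and strict monotonicity. The paper states this corollary as an immediate consequence of those two results and gives no further proof; your write-up merely makes the trace/weak-formulation bookkeeping explicit.
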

\begin{remark}
(1) For the classical case $s=1$, the existence and classification of positive solutions were investigated in many papers, see e.g. \cite{Terrachini:ADE96}, \cite{JinQinianLiyanyanXuADE2008}, \cite{smets2005nonlinear} and the references therein.

(2) The result in Corollary \ref{c:f-exitence-s} was also proved in \cite{DiMoPeSc15} by a nonlocal version of moving plane method. Moreover, it should be pointed out that the asymptotic property of the solutions to (\ref{e:main3}) for $0\le \lambda< \Lambda_{n,s}$ was obtained in \cite{DiMoPeSc15}. To be more precisely, if $u$ is a positive solution to (\ref{e:main3}), then there exist positive numbers $C_1$ and $C_2$ such that
\begin{equation*}
C_1\left(|x|^{1-\eta}(1+|x|^{2\eta})\right)^{-\frac{n-2s}{2}}\le u(x)\le C_2\left(|x|^{1-\eta}(1+|x|^{2\eta})\right)^{-\frac{n-2s}{2}},
\end{equation*}
where $\eta=1-\frac{2\vartheta}{n-2s}$ for some explicit determined number $\vartheta\in(0,\frac{n-2s}{2})$. For details, see \cite[Theorem 1.7]{DiMoPeSc15}.
\end{remark}

This paper is organized as follows: in Section \ref{s:preliminaries}, we give some preliminaries for our further investigation. In Section \ref{s:pohozaev}, we prove Theorem \ref{t:pozaev-identity} and Corollary \ref{c:nonexistence-g}. Section \ref{s:regularity} contains the proof of regularity property in Theorem \ref{t:regulatiry}. In Section \ref{s:symmetry}, we prove the symmetry and monotonicity properties in Theorem \ref{t:regulatiry} by the method of moving spheres. Sections \ref{s:none} and \ref{s:existence-d} are devoted to the proof of Theorems \ref{t:lglne} and \ref{t:existence-6} respectively. Appendix contains the detailed computation of Kelvin transformation.


\section{Preliminaries}\label{s:preliminaries}

In this section, we illustrate some definitions and basic results.

Let $\Omega$ be a domain in $\mathbf R^n$ with Lipschitz boundary and $s\in (0, 1)$. The $s$-order Sobolev space $H^s(\Omega)$ is defined by
\begin{equation*}
H^s(\Omega):=\left\{u\in L^2(\Omega)\,\Big|\,\int_{\Omega}\int_{\Omega}\frac{|u(x)-u(y)|^2}{|x-y|^{n+2s}}dxdy<\infty\right\}
\end{equation*}
with its norm given by
\begin{equation*}
\|u\|_{H^s{(\Omega)}}:=\left(\int_{\Omega}u^2dx+\int_{\Omega}\int_{\Omega}\frac{|u(x)-u(y)|^2}{|x-y|^{n+2s}}dxdy\right)^{\frac{1}{2}}.
\end{equation*}

Let $X := (x, t)\in \mathbf{R}^{n + 1}$ where $x\in \mathbf{R}^n$ and $t\in \mathbf{R}$, $D\subset \mathbf R^{n+1}_+$ be a domain with Lipschitz boundary. It is well-known that $|t|^{1-2s}$ is an element of the Muckenhoupt $A_2$ class in $\mathbf{R}^{n + 1}$ (see \cite{MuckTAMS72}). Define $L^2(D,t^{1-2s})$ to be the Hilbert space of all measurable functions $U$ on $D$ with norm
\begin{equation*}
\|U\|_{L^2(D,t^{1-2s})}:=\left(\int_{D}t^{1-2s}U^2(X)dX\right)^{\frac{1}{2}} < \infty.
\end{equation*}
The space of all functions $U\in L^2(D,t^{1-2s})$ with its weak derivatives $\nabla U$ exists and belongs to $L^2(D,t^{1-2s})$ is denoted as $H^1(D,t^{1-2s})$. The norm in $H^1(D,t^{1-2s})$ is
\begin{equation*}
\|U\|_{H^1(D,t^{1-2s})}:=\left(\int_{D}t^{1-2s}U^2(X)dX+\int_{D}t^{1-2s}|\nabla U|^2(X)dX\right)^{\frac{1}{2}}.
\end{equation*}
Correspondingly, the inner product on $H^1(D,t^{1-2s})$ is given by
\begin{equation*}
\langle U,V\rangle_{H^1(D,t^{1-2s})}:=\int_{D}t^{1-2s}(UV+\nabla U\cdot\nabla V)dX.
\end{equation*}

Every element in $H^1(D,t^{1-2s})$ has a well-defined trace. To be more precisely, one has the following proposition (for the details of its proof and other related results, see e.g. \cite{FabesKenigCarlosCPDE1982,JinLiXiongJEMS2014}).
\begin{lemma}\label{p:trace}(\cite[Proposition 2.1]{JinLiXiongJEMS2014})
Let $\Omega\subset \mathbf R^n$ be a domain with Lipschitz boundary and $R>0$. Set $D=\Omega\times (0,R)\subset \mathbf R^n\times \mathbf R_+$.
If $U\in H^1(D,t^{1-2s})\cap C(D\cup\partial'D)$, then $u:=U(\,\cdot\,,0)\in H^s(\Omega)$ and
\begin{equation*}
\|u\|_{H^s(\Omega)}\le C\|U\|_{H^1(D,t^{1-2s})},
\end{equation*}
where $C$ is a positive constant only depending on $n$, $s$, $R$ and $\Omega$. Therefore, for every $U\in H^1(D,t^{1-2s})$, the trace $U(\,\cdot\,,0)$ is well-defined and belongs to $H^s(\Omega)$. Furthermore, there exists a positive constant $C_{n,s}$ depending only on $n,\,s$ such that
\begin{equation*}
\|U(\,\cdot\,,0)\|_{L^{\frac{2n}{n-2s}}(\Omega)}\le C_{n,s}\|\nabla U\|_{L^2(D,t^{1-2s})}\quad \mbox{ for all }\quad  U\in C^{\infty}_c(D\cup\partial'D).
\end{equation*}
\end{lemma}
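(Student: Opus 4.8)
The plan is to prove both displayed inequalities first for $U\in C^\infty(D\cup\partial'D)\cap H^1(D,t^{1-2s})$ and then to recover the general statement by density: such functions are dense in $H^1(D,t^{1-2s})$ — a standard fact for the Muckenhoupt weight $t^{1-2s}\in A_2$, cf.\ \cite{FabesKenigCarlosCPDE1982,JinLiXiongJEMS2014} — and since the trace map $U\mapsto U(\cdot,0)$ will be shown continuous in the relevant norms, it extends to all of $H^1(D,t^{1-2s})$ with values in $H^s(\Omega)$.

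To bound $\|u\|_{L^2(\Omega)}$, use the fundamental theorem of calculus $U(x,0)=U(x,t)-\int_0^t\partial_\tau U(x,\tau)\,d\tau$ together with weighted Cauchy--Schwarz,
\[
\Big(\int_0^t|\partial_\tau U(x,\tau)|\,d\tau\Big)^2\le\Big(\int_0^t\tau^{2s-1}\,d\tau\Big)\int_0^t\tau^{1-2s}|\partial_\tau U(x,\tau)|^2\,d\tau=\frac{t^{2s}}{2s}\int_0^t\tau^{1-2s}|\partial_\tau U(x,\tau)|^2\,d\tau,
\]
which is legitimate because $2s-1>-1$. Multiplying the resulting pointwise bound for $|U(x,0)|^2$ by $t^{1-2s}$, integrating in $t$ over $(0,R)$ (legitimate because $2-2s>0$) and then in $x$ over $\Omega$ gives $\|u\|_{L^2(\Omega)}\le C(R,s)\|U\|_{H^1(D,t^{1-2s})}$.

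For the Gagliardo seminorm $[u]_{H^s(\Omega)}^2=\int_\Omega\int_\Omega|x-y|^{-n-2s}|u(x)-u(y)|^2\,dx\,dy$, after flattening $\partial\Omega$ locally and using a partition of unity I would reduce to pairs $x,y$ with $r:=|x-y|$ small and with the segment joining them contained in the domain (the contribution of $|x-y|$ bounded below is $\le C\|u\|_{L^2(\Omega)}^2$ and is already controlled). Then split $u(x)-u(y)=\big(U(x,0)-U(x,r)\big)+\big(U(x,r)-U(y,r)\big)+\big(U(y,r)-U(y,0)\big)$. For the middle (horizontal) term write $U(x,r)-U(y,r)=\int_0^1\nabla_xU(y+\theta(x-y),r)\cdot(x-y)\,d\theta$, so that $|x-y|^{-n-2s}|U(x,r)-U(y,r)|^2\le|x-y|^{2-n-2s}\int_0^1|\nabla_xU(y+\theta(x-y),r)|^2\,d\theta$, and the substitution $z=y+\theta(x-y)$ together with polar coordinates in $x-y$ bounds the corresponding double integral by $C\int\int_0^\infty r^{1-2s}|\nabla_xU(z,r)|^2\,dr\,dz$. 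For the first vertical term, $|U(x,0)-U(x,r)|\le\int_0^r|\partial_tU(x,t)|\,dt$, and integrating in $y$ in polar coordinates leaves $C\int_\Omega\int_0^\infty r^{-1-2s}\big(\int_0^r|\partial_tU(x,t)|\,dt\big)^2\,dr\,dx$, which the one-dimensional weighted Hardy inequality $\int_0^\infty r^{-1-2s}\big(\int_0^rg\big)^2\,dr\le s^{-2}\int_0^\infty t^{1-2s}g(t)^2\,dt$ — valid precisely because $s>0$ — bounds by $Cs^{-2}\|\nabla U\|_{L^2(D,t^{1-2s})}^2$; the third term is symmetric. Summing, $[u]_{H^s(\Omega)}^2\le C\big(\|\nabla U\|_{L^2(D,t^{1-2s})}^2+\|u\|_{L^2(\Omega)}^2\big)$, so combined with the previous step $\|u\|_{H^s(\Omega)}\le C\|U\|_{H^1(D,t^{1-2s})}$ with $C=C(n,s,R,\Omega)$.

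For the last inequality, given $U\in C_c^\infty(D\cup\partial'D)$ extend $U$ by zero to $\mathbf R^{n+1}_+$; then $u$ has compact support in $\mathbf R^n$ and repeating the Gagliardo-seminorm estimate on $\mathbf R^{n+1}_+$ — where the horizontal and Hardy steps now range over all of $(0,\infty)$ and involve only $\nabla U$, so no $L^2$ term enters — yields $[u]_{\dot H^s(\mathbf R^n)}^2\le C(n,s)\|\nabla U\|_{L^2(\mathbf R^{n+1}_+,t^{1-2s})}^2$ with a constant independent of $R$ and $\Omega$; the fractional Sobolev embedding $\dot H^s(\mathbf R^n)\hookrightarrow L^{2n/(n-2s)}(\mathbf R^n)$ then finishes the proof. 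The main obstacle is the Gagliardo-seminorm step on a general Lipschitz domain over a finite $t$-slab: the naive ``join $x$ to $y$ at height $|x-y|$'' argument breaks when the segment leaves $\Omega$ or when $|x-y|\ge R$, so one must localize and flatten, and one must track the weight exponents carefully — it is exactly there that $s>0$ (integrability of $\tau^{2s-1}$ at $0$ and validity of the Hardy inequality) and $s<1$ (local integrability of $t^{1-2s}$) are used.
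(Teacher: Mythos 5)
Your proposal gives a direct proof of a result that the paper merely cites from \cite[Proposition~2.1]{JinLiXiongJEMS2014}; the paper itself contains no proof of this lemma, so there is no ``paper route'' to compare against line by line. Your strategy is the classical one for weighted trace theorems, and the individual steps check out. The $L^2$ bound via the fundamental theorem of calculus and the weighted Cauchy--Schwarz inequality (using $\int_0^t\tau^{2s-1}\,d\tau<\infty$ for $s>0$ and $\int_0^R t^{1-2s}\,dt<\infty$ for $s<1$) is correct. For the Gagliardo seminorm, the three-piece decomposition at height $r=|x-y|$ is the standard device; the horizontal piece collapses by the substitution $z=y+\theta(x-y)$, $w=x-y$ (Jacobian $1$) followed by polar coordinates in $w$, producing exactly the weight $r^{1-2s}$, and the vertical pieces reduce to the one-dimensional Hardy inequality $\int_0^\infty r^{-1-2s}\bigl(\int_0^r g\bigr)^2 dr\le s^{-2}\int_0^\infty t^{1-2s}g^2\,dt$, whose constant $s^{-2}$ and validity for $s>0$ you state correctly (it follows by integrating $\int_0^\infty r^{-1-2s}G(r)^2\,dr$ by parts with $G(r)=\int_0^r g$ and applying Cauchy--Schwarz). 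The final $L^{2n/(n-2s)}$ bound by extending $U\in C_c^\infty(D\cup\partial'D)$ by zero to $\mathbf R^{n+1}_+$ and composing the full-space seminorm estimate with $\dot H^s(\mathbf R^n)\hookrightarrow L^{2n/(n-2s)}(\mathbf R^n)$ correctly yields a constant depending only on $n$ and $s$.

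Two places remain at sketch level and should be spelled out if you were to write this in full. First, the localization/flattening on a Lipschitz $\Omega$ over the slab $\Omega\times(0,R)$: you correctly flag that the segment-at-height-$|x-y|$ argument fails when $|x-y|\ge R$ or when the segment leaves $\Omega$, and that the far-field $|x-y|\gtrsim 1$ contribution is absorbed by $\|u\|_{L^2(\Omega)}^2$; carrying this out requires a bi-Lipschitz flattening in each chart and checking that the weight exponents are preserved under such a change of variables (they are, since the maps are Lipschitz in the $x$-variable and identity in $t$). Second, a small point of bookkeeping: your argument first proves the bound for $U\in C^\infty(D\cup\partial'D)\cap H^1$ and then extends by density, but the lemma's first assertion is for $U\in H^1\cap C(D\cup\partial'D)$. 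One should note that for such $U$, the trace obtained as the continuous extension of the operator from the dense smooth subspace coincides a.e.\ with the pointwise restriction $U(\cdot,0)$ — this is where the continuity assumption is actually used, and it is the hypothesis that makes the phrase ``$u:=U(\cdot,0)$'' meaningful before the density step. Neither of these is a gap in the logic; they are the unwritten but standard details your sketch already anticipates.
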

Here and after, for a domain $D\subset \mathbf{R}^{n+1}_+$ with boundary $\partial D$, $\partial'D$ denotes the interior of $\overline{D}\cap \partial \mathbf{R}^{n + 1}_{+}$ in $\mathbf{R}^n = \partial\mathbf{R}^{n + 1}$ and $\partial''D =: \partial D\setminus \partial'D$.

For further applications, we need the following Sobolev embedding inequality which was proved in \cite[Proposition 2.1.1]{DiMeVa15}. The embedding for more general weighted functions of $A_p$ type can be found in \cite{FabesKenigCarlosCPDE1982}.
\begin{lemma}\label{l:ewss}
There exists a constant $C$ such that, for all $U\in \dot{H}^{1}(\mathbf R^{n+1}_+,t^{1-2s})$, one has
\begin{equation*}
\left(\int_{\mathbf R^{n+1}_+}t^{1-2s}|U|^{2\gamma}\right)^{\frac{1}{2\gamma}}\le C\left(\int_{\mathbf R^{n+1}_+}t^{1-2s}|\nabla U|^{2}\right)^{\frac{1}{2}},
\end{equation*}
where $\gamma=1+\frac{2}{n-2s}$.
\end{lemma}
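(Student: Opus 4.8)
The plan is to deduce this from the standard theory of Muckenhoupt $A_2$ weights, the one delicate point being that the exponent $2\gamma$ is the \emph{sharp} one. Since $\dot H^1(\mathbf R^{n+1}_+,t^{1-2s})$ is by definition the completion of $C^\infty_0(\overline{\mathbf R^{n+1}_+})$ in the gradient norm, it suffices to prove the estimate for $U\in C^\infty_0(\overline{\mathbf R^{n+1}_+})$. For such $U$ I would first extend it to $\widetilde U$ on $\mathbf R^{n+1}$ by even reflection in the $t$ variable; then $\nabla_x\widetilde U$ is the even reflection of $\nabla_x U$ and $\partial_t\widetilde U$ the odd reflection of $\partial_t U$, so $\widetilde U$ is Lipschitz with compact support, each side of the desired inequality merely doubles, and the problem reduces to
\[
\Big(\int_{\mathbf R^{n+1}}|t|^{1-2s}\,|\widetilde U|^{2\gamma}\,dX\Big)^{\frac1{2\gamma}}\le C\Big(\int_{\mathbf R^{n+1}}|t|^{1-2s}\,|\nabla\widetilde U|^{2}\,dX\Big)^{\frac12}.
\]
Here $w(X)=|t|^{1-2s}$ is an $A_2$ weight on $\mathbf R^{n+1}$ (as already recalled in this section), and a direct computation shows that $d\mu:=w\,dX$ is Ahlfors regular of exponent $N:=n+2-2s$, i.e. $\mu(B_r(X))\simeq r^{N}$ uniformly in $X$ and $r$; in particular $\mu$ is doubling and $\mu(\lambda E)=\lambda^{N}\mu(E)$. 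Since $n>2s$ one has $N>2$, the number $2\gamma=\frac{2N}{N-2}$ is exactly the critical Sobolev exponent attached to the ``dimension'' $N$, and with this exponent the reduced inequality is invariant under $\widetilde U\mapsto\widetilde U(\lambda\,\cdot)$ — which is what pins down the value of $\gamma$.

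To establish the reduced inequality I would pass through the Dirichlet form $\mathcal E(f)=\int_{\mathbf R^{n+1}}|t|^{1-2s}|\nabla f|^{2}\,dX$ on $L^2(\mathbf R^{n+1},\mu)$, which is regular and strongly local, and whose metric measure space $(\mathbf R^{n+1},|\cdot|,\mu)$ enjoys volume doubling together with the scale-invariant $L^2$ Poincar\'e inequality — the latter being precisely the weighted Poincar\'e inequality for $A_2$ weights of Fabes--Kenig--Serapioni \cite{FabesKenigCarlosCPDE1982}. By the Grigor'yan--Saloff-Coste characterization, doubling plus Poincar\'e yield two-sided Gaussian bounds for the heat kernel of $\mathcal E$, in particular the on-diagonal bound $p_\tau(X,X)\le C\,\mu(B_{\sqrt\tau}(X))^{-1}\simeq C\,\tau^{-N/2}$; alternatively one may bypass the abstract characterization, since the heat kernel of $\mathcal E$ is an explicit product of the Euclidean kernel in $x$ and a Bessel-type kernel in $t$, whose on-diagonal size is $\tau^{-n/2}\cdot\tau^{-(2-2s)/2}=\tau^{-N/2}$ by direct computation. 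Varopoulos' theorem then converts this ultracontractivity estimate into the Sobolev inequality $\|f\|_{L^{2N/(N-2)}(\mu)}^{2}\le C\,\mathcal E(f)$, which is exactly what is needed. An essentially equivalent route stays inside \cite{FabesKenigCarlosCPDE1982}, which already furnishes a local weighted Sobolev--Poincar\'e inequality on balls whose gain, for the explicit power weight $|t|^{1-2s}$, can be tracked to equal $\gamma$; the passage to the global inequality for compactly supported functions is then routine (cover $\mathbf R^{n+1}$ by congruent balls and sum, or rescale a single-ball estimate and let its radius tend to infinity, using the scale invariance noted above).

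I expect the main obstacle to be obtaining the \emph{sharp} exponent $2\gamma$ rather than merely some exponent $>2$: the abstract $A_2$ machinery yields only the latter in a local Sobolev--Poincar\'e inequality. What resolves this is that for the explicit weight $|t|^{1-2s}$ the correct exponent is dictated solely by the Ahlfors regularity exponent $N=n+2-2s$ of $d\mu$, which is elementary to compute, and the heat-kernel route then produces the matching Sobolev exponent $\frac{2N}{N-2}=2\gamma$ automatically. Thus the two substantive inputs are the scale-invariant weighted Poincar\'e inequality of \cite{FabesKenigCarlosCPDE1982} and the $N$-Ahlfors regularity of $|t|^{1-2s}\,dX$; density, even reflection, and the standard Dirichlet-form facts are routine.
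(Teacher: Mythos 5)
Your reduction to $C^\infty_0(\overline{\mathbf R^{n+1}_+})$ by density, the even reflection, and the scaling computation identifying $2\gamma=\tfrac{2N}{N-2}$ with $N=n+2-2s$ are all fine; and since the paper does not prove this lemma but cites \cite[Proposition 2.1.1]{DiMeVa15}, there is no in-paper proof to compare against, so I can only test your argument against the facts. The load-bearing step---that $d\mu=|t|^{1-2s}\,dX$ is Ahlfors $N$-regular, i.e.\ $\mu(\mathcal B_r(X))\simeq r^{N}$ uniformly in $X$ and $r$---is false. Writing $X=(x_0,t_0)$, the claimed comparability holds when $|t_0|\lesssim r$, but when $|t_0|\ge 2r$ the weight is $\simeq|t_0|^{1-2s}$ throughout $\mathcal B_r(X)$, so $\mu(\mathcal B_r(X))\simeq|t_0|^{1-2s}r^{\,n+1}$, and the ratio to $r^{N}$ is $(|t_0|/r)^{1-2s}$, which is unbounded above for $s<\tfrac{1}{2}$ and tends to $0$ for $s>\tfrac{1}{2}$. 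Your ``direct computation'' of the Bessel factor has the same defect: the on-diagonal heat kernel of $-\partial_t^2-\tfrac{1-2s}{t}\partial_t$ on $L^2(\mathbf R_+,t^{1-2s}dt)$ is $\simeq\tau^{-(1-s)}$ only for $t\lesssim\sqrt\tau$; for $t\gg\sqrt\tau$ the asymptotics $I_\nu(z)\sim e^{z}/\sqrt{2\pi z}$ give $p^{(t)}_\tau(t,t)\simeq t^{2s-1}\tau^{-1/2}$, a different power of $\tau$, so the uniform on-diagonal bound $\tau^{-N/2}$ you invoke is not there.

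For $s<\tfrac{1}{2}$ your argument is repairable, because ultracontractivity only needs the one-sided lower volume bound $\mu(\mathcal B_r(X))\gtrsim r^{N}$, which does hold in that range ($|t_0|^{1-2s}\ge(2r)^{1-2s}$ when $|t_0|\ge 2r$); together with doubling and the weighted Poincar\'e inequality of \cite{FabesKenigCarlosCPDE1982} this yields $p_\tau(X,X)\lesssim\mu(\mathcal B_{\sqrt\tau}(X))^{-1}\lesssim\tau^{-N/2}$ and then the Sobolev inequality by Nash--Varopoulos. For $s>\tfrac{1}{2}$ even that one-sided bound fails, and so does the inequality you are proving: with fixed bumps $\phi\in C^\infty_c(\mathbf R^{n})$, $\psi\in C^\infty_c(-1,1)$ and $U(x,t)=\phi(x)\psi(t-T)$, one has $\|U\|_{L^{2\gamma}(t^{1-2s}\,dxdt)}\simeq T^{(1-2s)/(2\gamma)}$ and $\|\nabla U\|_{L^{2}(t^{1-2s}\,dxdt)}\simeq T^{(1-2s)/2}$, so the ratio grows like $T^{(1-2s)(1-\gamma)/(2\gamma)}\to\infty$ as $T\to\infty$. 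You should therefore replace the two-sided Ahlfors claim by the correct one-sided volume lower bound, note that the resulting proof (and the statement) requires $s\le\tfrac{1}{2}$, and drop the closing ``cover by congruent balls and sum'' remark, which meets the same obstruction: the local FKS inequality carries a factor $w(B)^{1/(2\gamma)-1/2}$ that is not a uniform power of $r(B)$, again because $w$ is not Ahlfors regular.
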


Let $\mathcal{B}_R(X)$ be the ball in $\mathbf{R}^{n + 1}$ with radius $R$ and center $X$, $\mathcal{B}^+_R(X) = \mathcal{B}_R(X)\cap \mathbf{R}^{n + 1}_+$ and $B_R(x)$ be the ball in $\mathbf{R}^n$ with radius $R$ centered at $x$. For simplicity, we will write $\mathcal{B}(0)$, $\mathcal{B}^+(0)$ and $B_R(0)$ as $\mathcal{B}$, $\mathcal{B}^+$ and $B_R$, respectively.
\begin{dfn}\label{d:weak-solutions}
We say $W\in H^1(\mathcal B_R^+,t^{1-2s})\cap L^2(\mathcal B^+_R,\frac{1}{|x|^\alpha})$ is a weak solution to
\begin{equation}\label{e:extention-bounded-domain}
\left\{\begin{array}{rl}
         {\rm div}(t^{1-2s}\nabla U)=0, & \mbox{ in }\mathcal B_R^+,\\
         -\displaystyle\lim_{t\to 0^+}t^{1-2s}\partial_t U(x,t)=\kappa_s\left(\dfrac{\lambda}{|x|^{\alpha}}U(x,0)+(|U|^{p-1}U)(x,0)\right), & \mbox{ on }\partial\mathcal B_R^+=B_R,
       \end{array}
\right.
\end{equation}
if for all $V\in C_0^{\infty}(\mathcal B_R^+\cup B_R)$, the following equation holds,
\begin{equation*}
\int_{\mathbf R^{n+1}_+}t^{1-2s}\nabla W\cdot\nabla V dxdt=\kappa_s\int_{B_R}\left(\dfrac{\lambda}{|x|^{\alpha}}W(x,0)+(|W|^{p-1}W)(x,0)\right)V dx.
\end{equation*}
\end{dfn}

The functions in $\dot{H}^s(\mathbf R^n)$ have natural extension in $\dot{H}^1(\mathbf R^{n+1}_+,t^{1-2s})$. To be more precisely,
for all $u\in \dot{H}^s(\mathbf R^n)$, define
\begin{equation}\label{e:extension-u}
U(x,t)=(\mathcal P_s\ast u)(x,t):=\int_{\mathbf R^n}\mathcal P_s(x-\xi,t)u(\xi)d\xi,\quad (x,t)\in\mathbf R^{n+1}_+,
\end{equation}
where
\begin{equation}\label{e:psk}
\mathcal P_s(x,t)=\iota(n,s)\frac{t^{2s}}{(|x|^2+t^2)^{\frac{n+2s}{2}}}
\end{equation}
with the normalized constant $\iota(n,s)$ such that $\int_{\mathbf R^n}\mathcal P_s(x,1)dx=1$. Then $U\in C^{\infty}(\mathbf R^{n+1}_+)$ and $U\in L^2(K,t^{1-2s})$ for any compact set $K$ in $\overline{\mathbf R^{n+1}_+}$, $\nabla U\in L^2(\mathbf R^{n+1}_+,t^{1-2s})$. Moreover,
\begin{equation*}
{\rm div}(t^{1-2s}\nabla U)=0,\quad \mbox{ in }\mathbf R^{n+1}_+,
\end{equation*}
and
\begin{equation}\label{e:gtrace}
\|\nabla U\|_{L^2(t^{1-2s},\mathbf R^{n+1}_+)}=\kappa_s\|u\|_{\dot{H}^s(\mathbf R^n)}.
\end{equation}
We call $U(x,t)=(\mathcal P_s\ast u)(x,t)$ the extension of $u(x)$ for any $u\in \dot{H}^s(\mathbf R^n)$.

\begin{lemma}\cite[Lemma A.3]{JinLiXiongJEMS2014}\label{l:minimal-eu}
Let $u\in C^{\infty}_0(\mathbf R^n)$ and $U(\cdot,t)=\mathcal P_s(\cdot,t)\ast u(\cdot)$. Then for any $W\in C^{\infty}_0(\overline{\mathbf R^{n+1}_+})$ with
$W(\cdot,0)=u(x)$, it holds that
\begin{equation*}
\int_{\mathbf R^{n+1}_+}t^{1-2s}|\nabla U|^2\le\int_{\mathbf R^{n+1}_+}t^{1-2s}|\nabla W|^2.
\end{equation*}
\end{lemma}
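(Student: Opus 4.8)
The plan is to show that among all functions $W \in C^\infty_0(\overline{\mathbf R^{n+1}_+})$ sharing the same boundary trace $u$, the Poisson-type extension $U = \mathcal P_s \ast u$ minimizes the weighted Dirichlet energy $\int_{\mathbf R^{n+1}_+} t^{1-2s}|\nabla \cdot|^2$. The standard route is to exploit that $U$ is the weak solution of the degenerate problem ${\rm div}(t^{1-2s}\nabla U) = 0$ in $\mathbf R^{n+1}_+$ with trace $u$ on $\partial \mathbf R^{n+1}_+$. Write $W = U + \varphi$, where $\varphi := W - U$ has zero trace on $\{t = 0\}$. Then
\begin{equation*}
\int_{\mathbf R^{n+1}_+} t^{1-2s}|\nabla W|^2 = \int_{\mathbf R^{n+1}_+} t^{1-2s}|\nabla U|^2 + 2\int_{\mathbf R^{n+1}_+} t^{1-2s}\nabla U\cdot\nabla\varphi + \int_{\mathbf R^{n+1}_+} t^{1-2s}|\nabla\varphi|^2,
\end{equation*}
so it suffices to prove the cross term vanishes: $\int_{\mathbf R^{n+1}_+} t^{1-2s}\nabla U\cdot\nabla\varphi = 0$.

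To establish the orthogonality, I would integrate by parts on the truncated region $\mathbf R^n \times (\varepsilon, 1/\varepsilon)$ (or intersect with a large ball), using ${\rm div}(t^{1-2s}\nabla U) = 0$ in the interior. The boundary contributions split into: (i) the slice at $t = 1/\varepsilon$ (and the lateral part at spatial infinity), which vanish because $\varphi$ has compact support while $U$ and its gradient decay — here one uses the explicit form \eqref{e:psk} of the Poisson kernel to control $t^{1-2s}|\nabla U|$ for large $t$; (ii) the slice at $t = \varepsilon$, where the integrand is $-t^{1-2s}\partial_t U\,\varphi$ evaluated at $t = \varepsilon$. Since $-\lim_{t\to 0^+} t^{1-2s}\partial_t U(x,t) = \kappa_s (-\Delta)^s u(x)$ is finite and $\varphi(x,\varepsilon) \to \varphi(x,0) = 0$ uniformly (as $\varphi \in C^\infty_0$), this slice also tends to $0$ as $\varepsilon \to 0$. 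Passing to the limit gives the desired identity, whence $\int t^{1-2s}|\nabla W|^2 \ge \int t^{1-2s}|\nabla U|^2$.

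The main obstacle is the careful justification of the boundary terms near $t = 0$: the weight $t^{1-2s}$ degenerates or blows up depending on the sign of $1 - 2s$, and one must verify that $t^{1-2s}\partial_t U$ has a well-defined finite trace and that the product with $\varphi(\cdot,\varepsilon)$ is integrable in $x$ uniformly in $\varepsilon$. For $u \in C^\infty_0(\mathbf R^n)$ this is classical from the Caffarelli–Silvestre theory (the conormal derivative exists and equals a constant multiple of $(-\Delta)^s u \in L^\infty \cap L^1$), and the compact support of $\varphi$ makes the $x$-integration automatic, so the estimate is routine once the structure is set up. Alternatively — and this avoids the trace discussion entirely — one may invoke \eqref{e:gtrace}: since $U = \mathcal P_s \ast u$ realizes equality $\|\nabla U\|_{L^2(t^{1-2s})}^2 = \kappa_s^2\|u\|_{\dot H^s}^2$ and this quantity is, by the Caffarelli–Silvestre characterization, the infimum of the weighted Dirichlet energy over all extensions with trace $u$, the inequality follows immediately; but since the lemma is quoted from \cite{JinLiXiongJEMS2014}, either presentation suffices.
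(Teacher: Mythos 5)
The paper itself gives no proof of this lemma; it is quoted verbatim from \cite[Lemma A.3]{JinLiXiongJEMS2014}, so there is no in-paper argument to compare against. Your orthogonality decomposition $W = U + \varphi$ with $\varphi|_{t=0}=0$ is the standard and correct route, and the alternative you sketch at the end (energy equality (\ref{e:gtrace}) plus the extremal characterization) is essentially the same fact repackaged, so it adds little.

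There is, however, a concrete slip you should fix: $\varphi = W - U$ is \emph{not} in $C^\infty_0(\overline{\mathbf R^{n+1}_+})$. Only $W$ has compact support; $U = \mathcal P_s * u$ does not, so outside the support of $W$ one has $\varphi = -U$, which merely decays. Your write-up invokes the compact support of $\varphi$ twice — once to kill the lateral and far boundary terms, and once to make "the $x$-integration automatic" on the slice $t=\varepsilon$ — and in both places the justification should instead rest on the decay of $U$, $\nabla U$, and $(-\Delta)^s u$ (all of which you can read off from the explicit kernel (\ref{e:psk}) since $u\in C^\infty_0$). Concretely: for the far boundary terms one bounds $|U(x,t)|\lesssim \|u\|_{L^1}\,t^{2s}(|x|^2+t^2)^{-(n+2s)/2}$ and similarly for $\nabla U$; for the slice at $t=\varepsilon$ one notes $t^{1-2s}\partial_t U(\cdot,\varepsilon)\to -\kappa_s(-\Delta)^s u$, which decays like $|x|^{-n-2s}$, while $\varphi(\cdot,\varepsilon)\to 0$ pointwise and stays bounded (by $\|u\|_\infty + \|W\|_\infty$), so dominated convergence finishes it. With that correction the argument is complete; as written, the appeal to "$\varphi\in C^\infty_0$" is a genuine (if easily repaired) error, and it sits oddly next to your correct remark that one must also control the decay of $U$ — if $\varphi$ really were compactly supported, no such decay would be needed.
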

\begin{lemma}\label{l:eminimal}
Let $U\in \dot{H}^1(\mathbf R^{n+1}_+,t^{1-2s})$. Then
\begin{equation}\label{e:trace-embedding}
\kappa_s\|U(\cdot, 0)\|_{\dot{H}^s(\mathbf R^n)}\le \|U\|_{\dot{H}^1(\mathbf R^{n+1}_+,t^{1-2s})}.
\end{equation}
\end{lemma}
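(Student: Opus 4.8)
My plan is to deduce (\ref{e:trace-embedding}) from the corresponding inequality on $C_0^{\infty}(\overline{\mathbf R^{n+1}_+})$ together with a density argument; the inequality on smooth functions is an immediate consequence of the two facts already recorded, namely the extension energy identity (\ref{e:gtrace}) and the minimality of the Poisson extension, Lemma \ref{l:minimal-eu}.

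\emph{The inequality on a dense class.} First I would take $W\in C_0^{\infty}(\overline{\mathbf R^{n+1}_+})$ and set $u:=W(\cdot,0)$, which lies in $C_0^{\infty}(\mathbf R^n)$ since $W$ is smooth with compact support in $\overline{\mathbf R^{n+1}_+}$. Writing $\mathcal E u:=\mathcal P_s\ast u$ for its Poisson extension (\ref{e:extension-u}), we have $(\mathcal E u)(\cdot,0)=u=W(\cdot,0)$, so Lemma \ref{l:minimal-eu} gives $\|\nabla(\mathcal E u)\|_{L^2(\mathbf R^{n+1}_+,t^{1-2s})}\le\|\nabla W\|_{L^2(\mathbf R^{n+1}_+,t^{1-2s})}$, while (\ref{e:gtrace}) identifies the left-hand side with $\kappa_s\|u\|_{\dot H^s(\mathbf R^n)}$. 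Hence
\begin{equation*}
\kappa_s\|W(\cdot,0)\|_{\dot H^s(\mathbf R^n)}\le\|W\|_{\dot H^1(\mathbf R^{n+1}_+,t^{1-2s})}\qquad\text{for every }W\in C_0^{\infty}(\overline{\mathbf R^{n+1}_+}).
\end{equation*}

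\emph{Passage to the limit.} Given $U\in\dot H^1(\mathbf R^{n+1}_+,t^{1-2s})$, I would pick $W_k\in C_0^{\infty}(\overline{\mathbf R^{n+1}_+})$ with $W_k\to U$ in $\dot H^1(\mathbf R^{n+1}_+,t^{1-2s})$; applying the displayed inequality to the differences $W_k-W_j$ shows that $\{W_k(\cdot,0)\}$ is Cauchy in $\dot H^s(\mathbf R^n)$, with some limit $v$. It then remains to check $v=U(\cdot,0)$. To this end, fix $\rho>0$, set $D=B_\rho\times(0,\rho)$, and use Lemma \ref{l:ewss} (on $W_k-W_j$) to see that $\{W_k\}$ is Cauchy in $L^{2\gamma}(\mathbf R^{n+1}_+,t^{1-2s})$, hence in $L^2(D,t^{1-2s})$ since $t^{1-2s}\in L^1(D)$; combined with $\nabla W_k\to\nabla U$ in $L^2(D,t^{1-2s})$ this gives $W_k\to U$ in $H^1(D,t^{1-2s})$. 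Then Lemma \ref{p:trace} yields $W_k(\cdot,0)\to U(\cdot,0)$ in $H^s(B_\rho)$, hence in $L^2(B_\rho)$, while $W_k(\cdot,0)\to v$ in $\dot H^s(\mathbf R^n)\hookrightarrow L^{2^*(s)}(\mathbf R^n)\subset L^2_{\mathrm{loc}}(\mathbf R^n)$; so $v=U(\cdot,0)$ on $B_\rho$ for every $\rho>0$. Passing to the limit in the smooth-function estimate then gives
\begin{equation*}
\kappa_s\|U(\cdot,0)\|_{\dot H^s(\mathbf R^n)}=\lim_{k\to\infty}\kappa_s\|W_k(\cdot,0)\|_{\dot H^s(\mathbf R^n)}\le\lim_{k\to\infty}\|W_k\|_{\dot H^1(\mathbf R^{n+1}_+,t^{1-2s})}=\|U\|_{\dot H^1(\mathbf R^{n+1}_+,t^{1-2s})},
\end{equation*}
which is (\ref{e:trace-embedding}).

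\emph{Main obstacle.} The smooth-function estimate and the final limit are routine; the delicate point is the identification $v=U(\cdot,0)$. Convergence in the homogeneous norm $\dot H^1(\mathbf R^{n+1}_+,t^{1-2s})$ controls only the gradients, so one must first upgrade it to genuine $H^1(D,t^{1-2s})$-convergence on bounded cylinders — this is exactly where the weighted Sobolev embedding of Lemma \ref{l:ewss} enters (it also forces the $L^{2\gamma}$-limit of the $W_k$ to be $U$ itself, since two functions in $L^{2\gamma}(\mathbf R^{n+1}_+,t^{1-2s})$ with the same distributional gradient on the connected half-space differ by a constant, necessarily zero) — after which the local trace theorem, Lemma \ref{p:trace}, applies and the rest is lower semicontinuity of the norm along the approximating sequence. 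I would carry out the steps in the order above: the estimate on $C_0^{\infty}$, then the approximation and the Cauchy argument in $\dot H^s$, then the cylinder-by-cylinder identification of the limiting trace, then passing to the limit.
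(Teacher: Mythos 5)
Your argument is correct and takes essentially the same route as the paper, which simply states that (\ref{e:trace-embedding}) follows immediately by combining (\ref{e:gtrace}) with Lemma \ref{l:minimal-eu}. What you have added is a careful, cylinder-by-cylinder justification (via Lemma \ref{l:ewss} and Lemma \ref{p:trace}) of the density step that the paper treats as obvious, in particular that the $\dot H^s(\mathbf R^n)$-limit of the traces $W_k(\cdot,0)$ really is $U(\cdot,0)$.
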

\begin{proof}
Combining (\ref{e:gtrace}) and Lemma \ref{l:minimal-eu}, we immediately have (\ref{e:trace-embedding}).
\end{proof}


\section{Proof of Theorem \ref{t:pozaev-identity} and Corollary \ref{c:nonexistence-g}}\label{s:pohozaev}

In this section, we prove a Pohozaev-type identity for (\ref{e:mail-extention}) inspired by the proof of \cite[Theorem 3.1]{FallFelliCPDE2014}.
Moreover, as its applications, we investigate the nonexistence of solutions for problem (\ref{e:main1}).
\begin{proof}[Proof of Theorem \ref{t:pozaev-identity}]
Let $\rho<r<R$. Set $O_{\delta}:=(\mathcal B_r^+\setminus \overline{\mathcal B_{\rho}^+})\cap \{(x,t)\,|\, t>\delta\}$ with $\delta>0$.
Let $\partial' O_{\delta}=\bar O_{\delta}\cap \{t=\delta\}$, $\partial''O_{\delta}=\partial O_{\delta}\setminus \partial'O_{\delta}$ and $\nu$ be the unit outer normal of $\partial O_{\delta}$. Multiplying (\ref{e:mail-extention}) by $X\cdot\nabla U$ and integrating by parts in $O_{\delta}$, we obtain that
\begin{eqnarray*}
&&-\int_{\partial'O_{\delta}}t^{1-2s}\partial_tU(X\cdot\nabla U)+\int_{\mathbf S_r^+\cap \{t>\delta\}}t^{1-2s}r\left|\frac{\partial U}{\partial\nu}\right|^2-\int_{\mathbf S_{\rho}^+\cap \{t>\delta\}}t^{1-2s}\rho\left|\frac{\partial U}{\partial\nu}\right|^2\notag\\
&&=\int_{O_{\delta}}t^{1-2s}\nabla U\cdot\nabla (X\cdot \nabla U)\notag\\
&&=\int_{O_{\delta}}t^{1-2s}|\nabla U|^2+\frac{1}{2}\int_{O_{\delta}}t^{1-2s}X\cdot\nabla(|\nabla U|^2)\notag\\
&&=\int_{O_{\delta}}t^{1-2s}|\nabla U|^2+\frac{1}{2}\int_{O_{\delta}}{\rm div}(t^{1-2s}|\nabla U|^2 X)-\frac{1}{2}\int_{O_{\delta}}{\rm div}(t^{1-2s} X)|\nabla U|^2\notag\\
&&=-\frac{n-2s}{2}\int_{O_{\delta}}t^{1-2s}|\nabla U|^2-\frac{1}{2}\int_{\partial'O_{\delta}}t^{2-2s}|\nabla U|^2
+\frac{1}{2}\int_{\partial''O_{\delta}}t^{1-2s}|\nabla U|^2X\cdot\nu.
\end{eqnarray*}
That is,
\begin{eqnarray}\label{e:pohozaev-2}
&&-\int_{\partial'O_{\delta}}\delta^{1-2s}\partial_tU(X\cdot\nabla U)+\int_{\mathbf S_r^+\cap \{t>\delta\}}t^{1-2s}r\left|\frac{\partial U}{\partial\nu}\right|^2-\int_{\mathbf S_{\rho}^+\cap \{t>\delta\}}t^{1-2s}\rho\left|\frac{\partial U}{\partial\nu}\right|^2\notag\\
&&=-\frac{n-2s}{2}\int_{O_{\delta}}t^{1-2s}|\nabla U|^2-\frac{1}{2}\int_{\partial'O_{\delta}}\delta^{2-2s}|\nabla U|^2\notag\\
&&\quad+\frac{1}{2}\int_{\mathbf S_r^+\cap \{t>\delta\}}t^{1-2s}r|\nabla U|^2-\frac{1}{2}\int_{\mathbf S_{\rho}^+\cap \{t>\delta\}}t^{1-2s}\rho|\nabla U|^2.
\end{eqnarray}
Rewriting the first term of the left side of (\ref{e:pohozaev-2}), we have
\begin{equation*}
\int_{\partial'O_{\delta}}\delta^{1-2s}\partial_tU(X\cdot\nabla U)=\delta^{2-2s}\int_{\partial'O_{\delta}}|\partial_t U|
+\int_{\partial'O_{\delta}}\delta^{1-2s}(x\cdot\nabla_x U)\partial_t U.
\end{equation*}
Thus
\begin{eqnarray*}
\frac{n-2s}{2}\int_{O_{\delta}}t^{1-2s}|\nabla U|^2&=&-\frac{\delta^{2-2s}}{2}\int_{\partial'O_{\delta}}|\nabla U|^2
+\delta^{2-2s}\int_{\partial'O_{\delta}}|\partial_tU|^2\notag\\
&&+\frac{r}{2}\int_{\mathbf S_r^+\cap \{t>\delta\}}t^{1-2s}|\nabla U|^2-r\int_{\mathbf S_r^+\cap \{t>\delta\}}t^{1-2s}\left|\frac{\partial U}{\partial\nu}\right|^2\notag\\
&&-\frac{\rho}{2}\int_{\mathbf S_{\rho}^+\cap \{t>\delta\}}t^{1-2s}|\nabla U|^2+\rho\int_{\mathbf S_{\rho}^+\cap \{t>\delta\}}t^{1-2s}\left|\frac{\partial U}{\partial\nu}\right|^2\notag\\
&&+\int_{\partial' O_{\delta}}\delta^{1-2s}(x\cdot\nabla_x U)\partial_t U.
\end{eqnarray*}

Since $U\in H^1(\mathbf R^{n+1},t^{1-2s})$, there exists a sequence $\delta_n\to 0$ such that
\begin{equation*}
\lim_{n\to\infty}\left(\frac{\delta_n^{2-2s}}{2}\int_{\partial'O_{\delta_n}}|\nabla U|^2+\delta_n^{2-2s}\int_{\partial'O_{\delta_n}}|\partial_tU|^2\right)=0.
\end{equation*}
From the Dominated Convergence Theorem and Lemma 3.3, Remark 3.6 in \cite{FallFelliCPDE2014}, it holds that
\begin{equation*}
\lim_{\delta\to 0}\int_{\partial' O_{\delta}}\delta^{1-2s}(x\cdot\nabla_x U)\partial_t U=-\kappa_s\int_{B_r\setminus B_{\rho}}(x\cdot\nabla_x U)\left(\dfrac{\lambda}{|x|^{\alpha}}U(x,0)+(|U|^{p-1}U)(x,0)\right).
\end{equation*}
Note that
\begin{eqnarray}\label{e:lim}
&&\int_{B_r\setminus B_{\rho}}(x\cdot\nabla_x U)\left(\dfrac{\lambda}{|x|^{\alpha}}U(x,0)+(|U|^{p-1}U)(x,0)\right)\\
&&=\frac{1}{2}\int_{B_r\setminus B_{\rho}}\frac{\lambda x}{|x|^{\alpha}}\cdot\nabla_x (U^2)+\frac{1}{p+1}\int_{B_r\setminus B_{\rho}}x\cdot\nabla_x(|U|^{p+1})\notag\\
&&:=T_1+T_2.
\end{eqnarray}
Then integrating by parts, we have
\begin{eqnarray*}
T_1&=&\frac{1}{2}\int_{B_r\setminus B_{\rho}}{\rm div}\left[\frac{\lambda x}{|x|^{\alpha}}U^2\right]-U^2{\rm div}\left[\frac{\lambda x}{|x|^{\alpha}}\right]\notag\\
&=&\frac{\lambda r}{2}\int_{\partial B_r}\frac{U^2}{|x|^{\alpha}}-\frac{\lambda \rho}{2}\int_{\partial B_{\rho}}\frac{U^2}{|x|^{\alpha}}
-\frac{n-\alpha}{2}\int_{B_r\setminus B_{\rho}}\frac{U^2}{|x|^{\alpha}}
\end{eqnarray*}
and
\begin{eqnarray*}
T_2&=&\frac{1}{p+1}\int_{B_r\setminus B_{\rho}}{\rm div}(|U|^{p+1}x)-|U|^{p+1}{\rm div}(x)\notag\\
&=&\frac{r}{p+1}\int_{\partial B_r}|U|^{p+1}-\frac{\rho}{p+1}\int_{\partial B_{\rho}}|U|^{p+1}-\frac{n}{p+1}\int_{B_r\setminus B_{\rho}}|U|^{p+1}.
\end{eqnarray*}
Therefore,
\begin{eqnarray}\label{e;pohozaev-3}
&&\frac{n-2s}{2}\int_{\mathcal B_r^+\setminus \mathcal B_{\rho}^+}t^{1-2s}|\nabla U|^2\\
&&=\frac{r}{2}\int_{\mathbf S_r^+}t^{1-2s}|\nabla U|^2-r\int_{\mathbf S_r^+}t^{1-2s}\left|\frac{\partial U}{\partial\nu}\right|^2
-\frac{\rho}{2}\int_{\mathbf S_{\rho}^+}t^{1-2s}|\nabla U|^2+\rho\int_{\mathbf S_{\rho}^+}t^{1-2s}\left|\frac{\partial U}{\partial\nu}\right|^2\notag\\
&&-\frac{\kappa_s\lambda r}{2}\int_{\partial B_r}\frac{U^2}{|x|^{\alpha}}+\frac{\kappa_s\lambda \rho}{2}\int_{\partial B_{\rho}}\frac{U^2}{|x|^{\alpha}}
+\kappa_s\frac{n-\alpha}{2}\int_{B_r\setminus B_{\rho}}\frac{U^2}{|x|^{\alpha}}\notag\\
&&-\frac{\kappa_sr}{p+1}\int_{\partial B_r}|U|^{p+1}+\frac{\kappa_s\rho}{p+1}\int_{\partial B_{\rho}}|U|^{p+1}+\frac{\kappa_sn}{p+1}\int_{B_r\setminus B_{\rho}}|U|^{p+1}.\notag
\end{eqnarray}
Since $U\in H^{1}(\mathbf R^{n+1}_+)$, $U(\cdot, 0)\in L^2(\mathbf R^{n},\frac{1}{|x|^{\alpha}})\cap L^{p+1}(\mathbf R^n)$, there exists a sequence $\rho_i\to 0$ such that when $i\to \infty$
\begin{equation*}
\frac{\rho_i}{2}\int_{\mathbf S_{\rho_i}^+}t^{1-2s}|\nabla U|^2+\rho_i\int_{\mathbf S_{\rho_i}^+}t^{1-2s}\left|\frac{\partial U}{\partial\nu}\right|^2\to 0,
\end{equation*}
\begin{equation*}
\frac{\kappa_s\lambda \rho_i}{2}\int_{\partial B_{\rho_i}}\frac{U^2}{|x|^{\alpha}}\to 0
\end{equation*}
and
\begin{equation*}
\frac{\kappa_s\rho_i}{p+1}\int_{\partial B_{\rho_i}}|U|^{p+1}\to 0.
\end{equation*}
Choosing $\rho=\rho_i$ and letting $i\to\infty$ in (\ref{e;pohozaev-3}), we have
\begin{eqnarray}\label{e:pohozaev}
\left(\frac{n-2s}{2}\right)\int_{\mathcal B_r^+}t^{1-2s}|\nabla U|^2&=&\frac{r}{2}\int_{\mathbf S_r^+}t^{1-2s}|\nabla U|^2-r\int_{\mathbf S_r^+}t^{1-2s}\left|\frac{\partial U}{\partial\nu}\right|^2\\
&-&\frac{\kappa_s\lambda r}{2}\int_{\partial B_r}\frac{U^2}{|x|^{\alpha}}
+\kappa_s\lambda\left(\frac{n-\alpha}{2}\right)\int_{B_r}\frac{U^2}{|x|^{\alpha}}\notag\\
&-&\frac{\kappa_sr}{p+1}\int_{\partial B_r}|U|^{p+1}+\frac{\kappa_sn}{p+1}\int_{B_r}|U|^{p+1}.\notag
\end{eqnarray}

We now prove that for a.e. $r\in (0,R)$,
\begin{equation}\label{e:pohozaev-1}
\int_{\mathcal B^+_r}t^{1-2s}|\nabla U|^2
=\int_{\mathbf S_r^+}t^{1-2s}\frac{\partial U}{\partial\nu}U+\kappa_s\lambda\int_{B_r}\frac{U^2}{|x|^{\alpha}}+\kappa_s\int_{B_r}|U|^{p+1}.
\end{equation}
Indeed, let $\eta_k(\rho)$ be a sequence of cut-off functions such that $\eta_k(\rho)=1$ if $0\le\rho<r-\frac{1}{k}$, $\eta_k(\rho)=0$ if $\rho>r$ and
$\eta_k=k(r-\rho)$ if $r-\frac{1}{k}\le\rho\le r$. Choosing $\eta_k(|X|)U(X)$ as a testing function in (\ref{e:extention-bounded-domain}), we have that
\begin{eqnarray}\label{e:wbrae1}
&&\int_{\mathcal B_r^+ }t^{1-2s}\nabla U(X)\cdot \nabla \left[\eta_k(|X|)U(X)\right]\notag\\
&&=\kappa_s\int_{B_r}\frac{\lambda}{|x|^{\alpha}}U^2(x,0)\eta_k(|x|)+\kappa_s\int_{B_r}|U|^{p+1}(x,0)\eta_k(|x|).
\end{eqnarray}
A direct calculation yields that
\begin{eqnarray*}
&&\int_{\mathcal B_r^+ }t^{1-2s}\nabla U(X)\cdot \nabla \left[\eta_k(|X|)U(X)\right]\\
&&=\int_{\mathcal B_r^+ }t^{1-2s}|\nabla U(X)|^2\eta_k(|X|)-k\int_{\mathcal B_r^+\setminus\mathcal B_{r-\frac{1}{k}}}t^{1-2s}\frac{\partial U}{\partial \nu}U\notag\\
&&=\int_{\mathcal B_r^+ }t^{1-2s}|\nabla U(X)|^2\eta_k(|X|)-k\int_{r-\frac{1}{k}}^r\left(\int_{\mathbf S_{\rho}^+}t^{1-2s}\frac{\partial U}{\partial \nu}UdS\right)d\rho.\notag
\end{eqnarray*}
Note that $t^{1-2s}\frac{\partial U}{\partial \nu}U\in L^1(\mathbf R^{n+1}_+)$, so for a.e. $r\in (0,R)$ there holds
\begin{equation*}
k\int_{r-\frac{1}{k}}^r\left(\int_{\mathbf S_{\rho}^+}t^{1-2s}\frac{\partial U}{\partial \nu}UdS\right)d\rho\to \int_{\mathbf S_{r}^+}t^{1-2s}\frac{\partial U}{\partial \nu}UdS,\quad \mbox{ as }k\to\infty.
\end{equation*}
On the other hand, as $k\to \infty$, we have
\begin{multline*}
\kappa_s\int_{B_r}\frac{\lambda}{|x|^{\alpha}}U^2(x,0)\eta_k(|x|)+\kappa_s\int_{B_r}|U|^{p+1}(x,0)\eta_k(|x|)\\
\to \kappa_s\int_{B_r}\frac{\lambda}{|x|^{\alpha}}|U|^2(x,0)+\kappa_s\int_{B_r}|U|^{p+1}(x,0)
\end{multline*}
and
\begin{equation}\label{e:wbrae2}
\int_{\mathcal B_r^+ }t^{1-2s}|\nabla U(X)|^2\eta_k(|X|)\to \int_{\mathcal B_r^+ }t^{1-2s}|\nabla U(X)|^2.
\end{equation}
Therefore, from (\ref{e:wbrae1})-(\ref{e:wbrae2}), we obtain (\ref{e:pohozaev-1}).

Finally, by (\ref{e:pohozaev}) and (\ref{e:pohozaev-1}), we have (\ref{e:pohozaev0}). This completes the proof.
\end{proof}

\begin{proof}[Proof of Corollary \ref{c:nonexistence-g}]
(1) From the Pohozaev-type identity (\ref{e:pohozaev0}), we deduce that if $U$ is a nontrivial solution to (\ref{e:mail-extention}), then for a.e. $r>0$ there holds
\begin{eqnarray}\label{e:pU1}
&&\kappa_s\lambda\left(\frac{2s-\alpha}{2}\right)\int_{B_r}\frac{U^2}{|x|^{\alpha}}+\kappa_s\left(\frac{n}{p+1}-\frac{n-2s}{2}\right)\int_{B_r}|U|^{p+1}\\
&&=\frac{r}{2}\int_{\mathbf S_r^+}t^{1-2s}|\nabla U|^2-r\int_{\mathbf S_r^+}t^{1-2s}\left|\frac{\partial U}{\partial\nu}\right|^2
-\frac{\kappa_s\lambda r}{2}\int_{\partial B_r}\frac{U^2}{|x|^{\alpha}}\notag\\
&&-\frac{\kappa_sr}{p+1}\int_{\partial B_r}|U|^{p+1}-\frac{n-2s}{2}\int_{\mathbf S^+_r}t^{1-2s}\frac{\partial U}{\partial\nu}U.\notag
\end{eqnarray}
By assumption of $\lambda,\,\alpha,\,p$, the left side of (\ref{e:pU1}) is positive.

Now we claim that there exists a sequence $r_i\to \infty$ such that
\begin{equation*}
\frac{r_i}{2}\int_{\mathbf S_{r_i}^+}t^{1-2s}|\nabla U|^2\to 0,\quad \mbox{as }i\to \infty.
\end{equation*}
Indeed, if this is not true, then there is a constant $C>0$ such that
\begin{equation*}
\frac{r}{2}\int_{\mathbf S_r^+}t^{1-2s}|\nabla U|^2\ge C>0,\quad \mbox{as }r\to \infty.
\end{equation*}
Thus, there exists an $r_1$ sufficiently large such that for all $r>r_1$,
\begin{equation}\label{e:rate-r}
\int_{\mathbf S_r^+}t^{1-2s}|\nabla U|^2\ge \frac{C}{r}.
\end{equation}
Since $U\in H^1(\mathbf R^{n+1},t^{1-2s})$, it holds that
\begin{equation*}
\int_{0}^{\infty}dr\int_{\mathbf S_r^+}t^{1-2s}|\nabla U|^2<\infty.
\end{equation*}
This is impossible when (\ref{e:rate-r}) holds. So we get our claim. Similarly, because $U\in H^1(\mathbf R^{n+1},t^{1-2s})$ and $U(\cdot, 0)\in H^s(\mathbf R^n)\cap L^2(\mathbf R^n,\frac{1}{|x|^{\alpha}})$, there exists a sequence $r_i\to \infty$ such that
\begin{eqnarray*}
r_i\int_{\mathbf S_{r_i}^+}t^{1-2s}\left|\frac{\partial U}{\partial\nu}\right|^2
+\frac{\kappa_s\lambda r_i}{2}\int_{\partial B_{r_i}}\frac{U^2}{|x|^{\alpha}}
+\frac{\kappa_sr_i}{p+1}\int_{\partial B_{r_i}}U^{p+1}+\frac{n-2s}{2}\int_{\mathbf S^+_{r_i}}t^{1-2s}\frac{\partial U}{\partial\nu}U\to 0.
\end{eqnarray*}
Therefore, choosing $r=r_i$ and letting $i\to \infty$ in (\ref{e:pU1}), we have
\begin{equation*}
\int_{\mathbf R^n}|U|^{p+1}=0.
\end{equation*}
That means that $U(\cdot,0)\equiv 0$ in $\mathbf R^n$. From (\ref{e:pohozaev-1}), we have $U\equiv 0$ in $\mathbf R^{n+1}_+$. This yields the result of (1).

(2) Assume that $U\in H^1(\mathbf R^{n+1}_+,t^{1-2s})\cap L^{p+1}(\mathbf R^n)$. When $\alpha=2s$, by (\ref{e:pohozaev0}), we have for a.e. $r>0$
\begin{eqnarray}\label{e:pU2}
&&\kappa_s\left(\frac{n}{p+1}-\frac{n-2s}{2}\right)\int_{B_r}|U|^{p+1}\\
&&=\frac{r}{2}\int_{\mathbf S_r^+}t^{1-2s}|\nabla U|^2-r\int_{\mathbf S_r^+}t^{1-2s}\left|\frac{\partial U}{\partial\nu}\right|^2
-\frac{\kappa_s\lambda r}{2}\int_{\partial B_r}\frac{U^2}{|x|^{2s}}\notag\\
&&-\frac{\kappa_sr}{p+1}\int_{\partial B_r}|U|^{p+1}-\frac{n-2s}{2}\int_{\mathbf S^+_r}t^{1-2s}\frac{\partial U}{\partial\nu}U.\notag
\end{eqnarray}
A similar argument as in the proof of (1) yields that $U\equiv 0$ in $\mathbf R^{n+1}_+$.

(3) Let $U\in H^s(\mathbf R^n)\cap L^{2}(\mathbf R^n,\frac{1}{|x|^{\alpha}})$. From Pohozaev type identity (\ref{e:pohozaev0}), for $p= 2^*(s)-1$, it holds that
\begin{eqnarray}\label{e:pU3}
\kappa_s\lambda\left(\frac{2s-\alpha}{2}\right)\int_{B_r}\frac{U^2}{|x|^{\alpha}}
&=&\frac{r}{2}\int_{\mathbf S_r^+}t^{1-2s}|\nabla U|^2-r\int_{\mathbf S_r^+}t^{1-2s}\left|\frac{\partial U}{\partial\nu}\right|^2
-\frac{\kappa_s\lambda r}{2}\int_{\partial B_r}\frac{U^2}{|x|^{\alpha}}\notag\\
&&-\frac{\kappa_sr}{p+1}\int_{\partial B_r}|U|^{p+1}-\frac{n-2s}{2}\int_{\mathbf S^+_r}t^{1-2s}\frac{\partial U}{\partial\nu}U.\notag
\end{eqnarray}
Similarly, we have $U\equiv 0$ in $\mathbf R^{n+1}_+$ by an argument in the proof of (1). This completes the proof.
\end{proof}


\section{Regularity of the solutions}\label{s:regularity}

In this section, we will give the proof of the regularity conclusion in Theorem \ref{t:regulatiry}. Through out this section and Section \ref{s:symmetry} below,
we omit the constant $\kappa_s$ for simplicity since it is not essential in the proof.

Let $D\subset \mathbf R^{n+1}_+$ be a bounded domain with $\partial' D\ne \emptyset$ and $0\notin \partial'D$, $a\in L^{\frac{2n}{n+2s}}(\partial'D)$ and $b\in L^1_{loc}(\partial' D)$. Consider the following boundary value problem,
\begin{equation}\label{e:extention_bounded}
\left\{\begin{array}{rl}
         {\rm div}(t^{1-2s}\nabla U(x,t))=0,& {\rm in}\, D, \\
         \displaystyle-\lim_{t\to 0^+}t^{1-2s}\partial_t U(x,t)=a(x)U(x,0)+b(x) ,&{\rm on}\,\partial' D.
       \end{array}
\right.
\end{equation}
\begin{dfn}\label{d:weak-solution}
$U\in H^1(D,t^{1-2s})$ is a weak solution (resp. supersolution, subsolution) of (\ref{e:extention_bounded}), if for every nonnegative $V\in C_c^{\infty}(D\cup\partial'D)$, one has
\begin{equation*}
\int_{D}t^{1-2s}\nabla U \nabla V=(\mbox{ resp. }\ge,\,\le)\int_{\partial'D}aUV+bV.
\end{equation*}
\end{dfn}

Let $Q_R=B_R\times (0,R)$, $f^+=\max(f,0)$ and $f^-=\max(-f,0)$. First, we recall
\begin{lemma}\label{l:moser-greater}{\rm (\cite[Lemma 2.8]{JinLiXiongJEMS2014})}
Suppose that $a\in L^{\frac{n}{2s}}(B_1)$, $b\in L^p(B_1)$ with $p>\frac{n}{2s}$ and $U\in H^1(Q_1,t^{1-2s})$ is a weak solution of (\ref{e:extention_bounded}) in $Q_1$. There exists $\delta>0$ which depends only on $n$ and $s$ such that if $\|a^+\|_{L^{\frac{n}{2s}}(B_1)}<\delta$, then
\begin{equation*}
\|U^+(\,\cdot\,,0)\|_{L^q(\partial'Q_{1/2})}\le C\left(\|U^+\|_{H^1(Q_1,t^{1-2s})}+\|b^+\|_{L^p(B_1)}\right),
\end{equation*}
where $C>0$ only depends on $n,\,p,\,s\,,\delta$ and $q=\min\left(\frac{2(n+1)}{n-2s},\frac{n(p-1)}{(n-2s)p}\cdot\frac{2n}{n-2s}\right)$.
\end{lemma}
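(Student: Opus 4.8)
This is the boundary analogue of the classical De Giorgi--Nash--Moser local $L^q$ bound, so the plan is to run a Moser-type argument adapted to the weight $t^{1-2s}$ and to the Robin-type condition on $\partial'Q_1$ (compare \cite[Lemma~2.8]{JinLiXiongJEMS2014}). First I would fix a cutoff $\eta\in C_c^\infty(\mathcal B_1\cup B_1)$ with $0\le\eta\le1$, $\eta\equiv1$ on a neighbourhood of $\overline{Q_{1/2}}$ and $\operatorname{supp}\eta\subset Q_{3/4}$, and, with $k:=\|b^+\|_{L^p(B_1)}$ and a truncation level $T>0$, set $\bar U:=\min(U^+,T)+k$. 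For an exponent $\beta\ge1$ (to be constrained later) I would use
\[
V:=\eta^2\bigl(\bar U^{2\beta-1}-k^{2\beta-1}\bigr)
\]
as a test function in Definition~\ref{d:weak-solution}: $V$ is nonnegative, compactly supported in $\mathcal B_1\cup B_1$, vanishes on $\{U\le0\}$, and lies in $H^1(Q_1,t^{1-2s})$ since $U^+$ does and truncation together with bounded powers preserves membership, so it is admissible after the routine $C_c^\infty$-approximation. The shift by $k$ and the subtraction of $k^{2\beta-1}$ are the usual devices for feeding the forcing $b$ into the unknown and for killing $V$ where $U\le0$.

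Substituting $V$ into $\int_{Q_1}t^{1-2s}\nabla U\cdot\nabla V=\int_{B_1}(aUV+bV)$, expanding $\nabla V$, discarding the favourable term proportional to $\eta^2\bar U^{2\beta-2}|\nabla\bar U|^2$ and applying Cauchy--Schwarz to the $\nabla\eta$--$\nabla\bar U$ cross term, I would reach a Caccioppoli inequality of the schematic shape
\[
\int_{Q_1}t^{1-2s}\bigl|\nabla(\eta\,\bar U^{\beta})\bigr|^2
\le C\beta\Bigl(\int_{Q_1}t^{1-2s}|\nabla\eta|^2\,\bar U^{2\beta}
+\int_{B_1}a^+\eta^2\bar U^{2\beta}
+\int_{B_1}b^+\eta^2\bar U^{2\beta-1}\Bigr),
\]
in which the potential and forcing enter through $aUV\le a^+\bar U\,V$ and $bV\le b^+\eta^2\bar U^{2\beta-1}$, up to a remainder supported on $\{U^+>T\}$ that has to be shown to vanish as $T\to\infty$. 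The potential term is then controlled by H\"older on $B_1$ with exponents $\tfrac{n}{2s}$ and $\tfrac{n}{n-2s}$ together with the trace--Sobolev inequality of Lemma~\ref{p:trace} applied to $\eta\bar U^\beta$, giving $\int_{B_1}a^+\eta^2\bar U^{2\beta}\le C_{n,s}^2\,\|a^+\|_{L^{n/2s}(B_1)}\,\|\nabla(\eta\bar U^\beta)\|_{L^2(Q_1,t^{1-2s})}^2$; choosing $\delta=\delta(n,s)$ so small that the resulting factor is at most $\tfrac12$, this term is absorbed into the left-hand side, and since the homogeneous Moser step uses no information about $p$, the threshold $\delta$ depends only on $n$ and $s$.

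With the Dirichlet energy of $\eta\bar U^\beta$ under control, I would invoke the weighted Sobolev embedding (Lemma~\ref{l:ewss}) and its trace consequence to upgrade integrability, bounding $\bar U^\beta(\cdot,0)$ in $L^{2n/(n-2s)}(B_{1/2})$, i.e.\ $U^+(\cdot,0)$ in $L^{2\beta n/(n-2s)}(B_{1/2})$, in terms of $\|U^+\|_{H^1(Q_1,t^{1-2s})}+k$. The admissible range of $\beta$ is dictated by keeping the right-hand side of the Caccioppoli inequality bounded uniformly in $T$: the $|\nabla\eta|^2$-term needs $\bar U^{2\beta}$ uniformly in $L^1(Q_1,t^{1-2s})$, which through the a priori membership $U^+\in L^{2\gamma}$ ($\gamma=1+\tfrac{2}{n-2s}$, Lemma~\ref{l:ewss}) caps $\beta$ and, after the book-keeping proper to the $(n+1)$-dimensional weighted estimate, yields the ceiling $q=\tfrac{2(n+1)}{n-2s}$; the $b$-term, estimated by $\int_{B_1}b^+\eta^2\bar U^{2\beta-1}\le\|b^+\|_{L^p(B_1)}\,\|\eta^2\bar U^{2\beta-1}\|_{L^{p'}(B_1)}$ with $p'=\tfrac{p}{p-1}$, imposes the competing ceiling $\tfrac{n(p-1)}{(n-2s)p}\cdot\tfrac{2n}{n-2s}$. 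The exponent $q$ in the statement is the minimum of these two, and letting $T\to\infty$ by monotone convergence removes the truncation, the constant depending only on $n,p,s,\delta$.

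The difficulty is not structural---this is standard Moser iteration---but lies in two quantitative points: one must keep the constant in the trace--Sobolev inequality $\|\eta\bar U^\beta(\cdot,0)\|_{L^{2n/(n-2s)}(B_1)}\le C_{n,s}\|\nabla(\eta\bar U^\beta)\|_{L^2(Q_1,t^{1-2s})}$ fully explicit so that the smallness threshold $\delta$ genuinely depends only on $n$ and $s$, and one must run the exponent accounting carefully so that the homogeneous Sobolev improvement and the ceiling forced by $b\in L^p$ combine into precisely $q=\min\bigl(\tfrac{2(n+1)}{n-2s},\,\tfrac{n(p-1)}{(n-2s)p}\cdot\tfrac{2n}{n-2s}\bigr)$; I would also have to handle the truncation remainder on $\{U^+>T\}$ as $T\to\infty$, which is where the a priori bound $U^+(\cdot,0)\in L^{2n/(n-2s)}(B_1)$ furnished by Lemma~\ref{p:trace} is used. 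Checking that the truncated, shifted, power test functions are legitimate in the sense of Definition~\ref{d:weak-solution} (nonnegativity, compact support in $\mathcal B_1\cup B_1$, approximability in $H^1(Q_1,t^{1-2s})$ by $C_c^\infty(\mathcal B_1\cup B_1)$ functions) is routine and I would dispose of it once at the outset.
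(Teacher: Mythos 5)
This statement is imported verbatim from \cite[Lemma 2.8]{JinLiXiongJEMS2014} and the present paper gives no proof of it, so there is nothing in the paper to compare your sketch against. That said, your plan is the right one: a one-step Moser-type estimate with the truncated, shifted power test function $V=\eta^2(\bar U^{2\beta-1}-k^{2\beta-1})$, a weighted Caccioppoli inequality for $\eta\bar U^{\beta}$, absorption of the potential term via H\"older and the trace--Sobolev inequality once $\|a^+\|_{L^{n/2s}}$ is small, and competing ceilings on the output exponent coming from the bulk weighted integrability and from the $L^p$ integrability of $b$. This is precisely the structure of the argument in \cite{JinLiXiongJEMS2014}, and the ancillary issues you flag (admissibility of the test function, the remainder on $\{U^+>T\}$) are the right ones to flag.

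Two points deserve more care than your sketch gives them. First, the claim that $\delta$ ends up depending only on $n,s$ is not automatic: the coefficient that multiplies $\int a^+\eta^2\bar U^{2\beta}$ after the Caccioppoli step grows like $\beta$, so the smallness threshold you need a priori shrinks as $\beta$ grows. What rescues the statement is that the exponent $q$ is capped by $\tfrac{2(n+1)}{n-2s}$, which bounds $\beta$ above by a constant depending only on $n$ and $s$; saying ``the homogeneous Moser step uses no information about $p$'' elides this mechanism and should be made explicit. Second, you derive the ceiling $\tfrac{2(n+1)}{n-2s}$ from the membership $U\in L^{2\gamma}(Q_1,t^{1-2s})$ furnished by Lemma~\ref{l:ewss}, but that lemma gives $2\gamma=\tfrac{2(n+2-2s)}{n-2s}$, which equals $\tfrac{2(n+1)}{n-2s}$ only when $s=1/2$, and the resulting cap $2\beta\le 2\gamma$ followed by the trace--Sobolev upgrade produces the exponent $\tfrac{2n(n-2s+2)}{(n-2s)^2}$, not $\tfrac{2(n+1)}{n-2s}$. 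The constant $\tfrac{2(n+1)}{n-2s}$ in the cited lemma must therefore come from a different (sharper) weighted trace or interpolation inequality on the slab, and the book-keeping you acknowledge as delicate is genuinely where the work is; as written, your accounting does not reproduce the stated exponent.
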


\begin{lemma}\label{p:harnack_1}{\rm (\cite[Proposition 2.6]{JinLiXiongJEMS2014})}
Suppose that $a,\,b\in L^p(B_1)$ for some $p>\frac{n}{2s}$. Let $U\in H^1(Q_1,t^{1-2s})$ be a nonnegative weak solution to (\ref{e:extention_bounded}) in $Q_1$. Then

(1) $\forall \nu>0$, there holds
\begin{equation*}
\sup_{Q_{1/2}}U\le C(\|U\|_{L^{\nu}(Q_1,t^{1-2s})}+\|b^+\|_{L^p(B_1)}),
\end{equation*}
where $C>0$ only depends on $n,\,s,\,p,\,\nu$ and $\|a^+\|_{L^p(B_1)}$.

(2)
We have the following Harnack inequality,
\begin{equation*}
\sup_{Q_{1/2}}U\le C(\inf_{Q_{1/2}}U+\|b\|_{L^p(B_1)}),
\end{equation*}
where $C>0$ only depends on $n,\,s,\,p$ and $\|a\|_{L^p(B_1)}$. Moreover, any weak solution $U$ of (\ref{e:extention_bounded}) is in $C^\varrho(\overline{Q_{1/2}})$ for some $\varrho\in (0, 1)$ only depending on $n$, $s$, $p$ and $\|a\|_{L^p(B_1)}$.
\end{lemma}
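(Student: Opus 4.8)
The plan is to run the De Giorgi--Nash--Moser iteration for the degenerate operator ${\rm div}(t^{1-2s}\nabla\,\cdot\,)$, adapted to the Muckenhoupt $A_2$ weight $t^{1-2s}$ and to the Neumann-type boundary datum $aU+b$ on $\partial'Q_1$. One may first reflect evenly across $\{t=0\}$ (set $\widetilde U(x,t)=U(x,|t|)$ on $B_1\times(-1,1)$), so that ${\rm div}(|t|^{1-2s}\nabla\widetilde U)=0$ away from $\{t=0\}$ and the boundary condition becomes the weak identity $\int|t|^{1-2s}\nabla\widetilde U\cdot\nabla\phi=2\int_{B_1}(a\widetilde U(\cdot,0)+b)\phi(\cdot,0)$ for even test functions $\phi\ge 0$, with the appropriate inequality when $U$ is a super- or subsolution; equivalently one works directly with test functions supported up to $\partial'Q_1$. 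Since $|t|^{1-2s}\in A_2(\mathbf R^{n+1})$, the Fabes--Kenig--Serapioni weighted Sobolev--Poincar\'e inequality holds with some gain exponent $\chi>1$, and the weighted trace inequality of Lemma \ref{p:trace} controls $\|v(\cdot,0)\|_{L^{2n/(n-2s)}(B_r)}$ by $\|v\|_{H^1(Q_r,t^{1-2s})}$ (after rescaling).

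The engine is a Caccioppoli-type estimate. Set $\bar U=U+k$ with $k=\|b^+\|_{L^p(B_1)}$, so that $b$ is dominated by $(|b|/k)\bar U$ with $\||b|/k\|_{L^p}=1$; fix $\beta\ge 0$ and a cutoff $\eta$, and test with $\eta^2\bar U^{2\beta+1}$ to obtain a bound of the shape
$$\int\eta^2|t|^{1-2s}\,\big|\nabla\bar U^{\beta+1}\big|^2\ \le\ C(\beta+1)^2\!\!\int |\nabla\eta|^2|t|^{1-2s}\bar U^{2\beta+2}\ +\ C(\beta+1)\!\!\int_{B_1}(a^++|b|/k)\,\eta^2\bar U^{2\beta+2}(\cdot,0).$$
The boundary integral is treated by H\"older with exponents $p,p'$ and then the trace inequality: it is $\lesssim\|a^+\|_{L^p}\|\eta\bar U^{\beta+1}\|_{L^{2p'}(B_1)}^2$, and since $p>\frac n{2s}$ one has $2<2p'<\frac{2n}{n-2s}$, so an interpolation inequality lets one bound it by $\varepsilon\int\eta^2|t|^{1-2s}|\nabla\bar U^{\beta+1}|^2$ plus a controlled bulk term, and $\varepsilon$ is absorbed on the left. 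This absorption step --- where the strict subcriticality $p>\frac n{2s}$ together with the $A_2$ trace theory is exactly what makes the scheme close, and where one must check that all constants depend only on $n,s,p$ and $\|a^+\|_{L^p(B_1)}$ (resp. $\|a\|_{L^p(B_1)}$) and never on $U$ --- is the main obstacle; the remaining steps are bookkeeping.

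With the Caccioppoli estimate, the weighted Sobolev inequality converts it into a reverse-H\"older inequality relating the weighted $L^{2\chi(\beta+1)}$ and $L^{2(\beta+1)}$ averages of $\bar U$ on concentric cylinders; iterating over $\beta_j$ with $\beta_j+1=\chi^j$ along a shrinking sequence of radii gives $\sup_{Q_{1/2}}\bar U\le C\,\big(\text{weighted }L^{\nu_0}\text{-average of }\bar U\text{ on }Q_1\big)$, and a further standard Moser interpolation lowers $\nu_0$ to an arbitrary $\nu>0$, which is part (1). For part (2), the companion weak Harnack inequality for nonnegative supersolutions follows by testing with negative powers $\eta^2\bar U^{2\beta+1}$ ($\beta<0$), together with the logarithmic test function $\eta^2\bar U^{-1}$, which shows $\log\bar U\in\mathrm{BMO}(Q_{3/4})$; John--Nirenberg then yields $\big(\text{weighted }L^{\nu}\text{-average of }\bar U\big)\cdot\big(\text{weighted }L^{\nu}\text{-average of }\bar U^{-1}\big)\le C$ for some small $\nu>0$, and chaining this with part (1) at exponent $\nu$ produces $\sup_{Q_{1/2}}U\le C(\inf_{Q_{1/2}}U+\|b\|_{L^p(B_1)})$. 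Finally, applying this Harnack inequality on cylinders $Q_r(X_0)$ with $X_0\in\overline{Q_{1/2}}$ to the nonnegative functions $\sup_{Q_r}U-U$ and $U-\inf_{Q_r}U$ gives an oscillation-decay estimate $\operatorname{osc}_{Q_{r/2}}U\le\gamma\,\operatorname{osc}_{Q_r}U+Cr^{\sigma}$ with $\gamma\in(0,1)$, whence $U\in C^\varrho(\overline{Q_{1/2}})$ for some $\varrho\in(0,1)$ depending only on $n,s,p$ and $\|a\|_{L^p(B_1)}$, as claimed.
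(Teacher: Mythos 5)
This lemma is not proved in the paper at all; it is reproduced verbatim as a citation of Jin, Li and Xiong (JEMS 2014, Proposition 2.6), so there is no in-paper argument to compare against. Your outline is, however, a faithful reconstruction of the De Giorgi--Nash--Moser scheme used in that reference: even reflection across $\{t=0\}$ so the Neumann datum becomes a surface-measure term, a Caccioppoli estimate for $\bar U = U + \|b^+\|_{L^p}$ obtained by testing with $\eta^2\bar U^{2\beta+1}$, absorption of the boundary term through the strict subcriticality $2p' < \tfrac{2n}{n-2s}$ and a trace-interpolation inequality
\begin{equation*}
\|v(\cdot,0)\|_{L^{2p'}(B_1)}^2 \le \varepsilon\,\|\nabla v\|_{L^2(Q_1,t^{1-2s})}^2 + C_\varepsilon\,\|v\|_{L^2(Q_1,t^{1-2s})}^2,
\end{equation*}
(which follows, e.g., from the compactness of the trace embedding $H^1(Q_1,t^{1-2s}) \hookrightarrow L^q(B_1)$ for $q < 2^*(s)$ in the $A_2$-weighted Sobolev theory of Fabes--Kenig--Serapioni), Moser iteration for part (1), and negative exponents plus the John--Nirenberg lemma applied to $\log\bar U$ to obtain the weak Harnack inequality, part (2), and the oscillation decay giving H\"older continuity. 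The one bookkeeping point worth making explicit is that, because the boundary term carries a factor $(\beta+1)$, the absorption parameter must be chosen $\varepsilon \sim (\beta+1)^{-1}$; the resulting polynomial growth of $C_\varepsilon$ in $\beta$ is still summable in the Moser product and causes no difficulty. With that caveat noted, your sketch is correct and matches the proof in the cited reference.
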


Let $D$ be a domain in $\mathbf R^{n+1}_+$ with $\partial' D\ne \emptyset$ and $0\notin \partial' D$. We consider the following problem,
\begin{equation}\label{e:extention_v_b}
\left\{\begin{array}{rl}
         {\rm div}(t^{1-2s}\nabla U(x,t))=0,& {\rm in}\, D, \\
         \displaystyle-\lim_{t\to 0^+}t^{1-2s}\partial_t U(x,t)=\frac{\lambda}{|x|^{2s}}U(x,0)+U^{\frac{n+2s}{n-2s}}(x,0) ,&{\rm on}\,\partial' D.
       \end{array}
\right.
\end{equation}

\begin{prop}\label{p:Harnack}
Let $D=(B_R\setminus B_r)\times (0,1)$ where $0<r<R$, $0<\delta<\frac{R-r}{2}$ and $K_{\delta/2}=(B_{R-\delta/2}\setminus B_{r+\delta/2})\times (0,1/2)$.  Suppose $U\in H^1(t^{1-2s},D)$ is a nonnegative weak solution of (\ref{e:extention_v_b}). Then we have
\begin{equation*}
\sup_{K_{\delta/2}}U\le C\inf_{K_{\delta/2}} U,
\end{equation*}
where $C$ depends only on $n,\,s$ and $\delta$. Moreover, any weak solution $U$ of (\ref{e:extention_v_b}) is in $C^\varrho(\overline{K_{\delta/2}})$ for some $\varrho\in (0, 1)$ depending only on $n$, $s$ and $\delta$.
\end{prop}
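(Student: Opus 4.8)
The plan is to freeze the nonlinearity in (\ref{e:extention_v_b}) into a linear zeroth‑order boundary coefficient, to upgrade that coefficient from the scaling‑critical Lebesgue class to a subcritical one by a single bootstrap, and then to apply Lemma~\ref{p:harnack_1} on a finite family of rescaled half‑cylinders and interior balls covering $\overline{K_{\delta/2}}$, chaining the resulting local estimates. Since $U\ge 0$, I would rewrite the Neumann datum in (\ref{e:extention_v_b}) as $a(x)U(x,0)+b(x)$ with
\[
a(x):=\frac{\lambda}{|x|^{2s}}+U^{\frac{4s}{n-2s}}(x,0),\qquad b\equiv 0,
\]
so that $U$ becomes a nonnegative weak solution of (\ref{e:extention_bounded}) for these $a,b$. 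Because $0\notin\overline{B_R\setminus B_r}$, the term $\lambda|x|^{-2s}$ is bounded on $B_R\setminus B_r$; by the trace inequality of Lemma~\ref{p:trace}, $U(\cdot,0)$ belongs to $H^s$ locally, hence to $L^{\frac{2n}{n-2s}}$ locally, and since $\frac{4s}{n-2s}\cdot\frac{n}{2s}=\frac{2n}{n-2s}$ this gives $U^{\frac{4s}{n-2s}}(\cdot,0)\in L^{\frac{n}{2s}}_{\rm loc}(B_R\setminus B_r)$. Hence $a\in L^{\frac{n}{2s}}_{\rm loc}(B_R\setminus B_r)$, which is exactly the borderline exponent for Lemmas~\ref{l:moser-greater} and \ref{p:harnack_1}.

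The key step is to improve this to $a\in L^{p_1}_{\rm loc}$ for some $p_1>\frac{n}{2s}$. Let $\delta_0>0$ be the constant of Lemma~\ref{l:moser-greater}. By absolute continuity of $\int|a|^{n/(2s)}$ there is $\rho_0>0$ such that $\|a\|_{L^{\frac{n}{2s}}(B_\rho(y))}<\delta_0$ whenever $B_\rho(y)\subset B_R\setminus B_r$ and $\rho\le\rho_0$. The isotropic dilation $X\mapsto y+\rho X$ preserves the equation ${\rm div}(t^{1-2s}\nabla\,\cdot\,)=0$, turns the Neumann coefficient into $\rho^{2s}a(y+\rho\,\cdot\,)$, whose $L^{\frac{n}{2s}}(B_1)$‑norm equals $\|a\|_{L^{\frac{n}{2s}}(B_\rho(y))}<\delta_0$, and maps $B_\rho(y)\times(0,\rho)$ onto $Q_1$. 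So Lemma~\ref{l:moser-greater} applies on each such cylinder with $b\equiv 0$; since its right‑hand side $\|U^+\|_{H^1(Q_1,t^{1-2s})}$ is finite (as $U\in H^1(D,t^{1-2s})$), it yields $U(\cdot,0)\in L^{q_1}_{\rm loc}$ with $q_1>\frac{2n}{n-2s}$ strictly. Substituting back into the formula for $a$ gives $a\in L^{p_1}_{\rm loc}(B_R\setminus B_r)$ with $p_1=q_1\cdot\frac{n-2s}{4s}>\frac{n}{2s}$ — now subcritical — while $b\equiv 0\in L^{p_1}$ trivially.

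With $a\in L^{p_1}_{\rm loc}$, $p_1>\frac{n}{2s}$, and $b\equiv 0$, Lemma~\ref{p:harnack_1} applies locally. I would then cover the compact set $\overline{K_{\delta/2}}$ by finitely many rescaled half‑cylinders $y_i+Q_{\rho_i}$ touching $\{t=0\}$, chosen with $\rho_i$ so small that the doubled cylinder $y_i+Q_{2\rho_i}$ lies in $D$ and their conclusion regions cover the part of $\overline{K_{\delta/2}}$ adjacent to $\{t=0\}$, together with finitely many interior balls $\mathcal B_{\rho'}(X_i)$ compactly contained in $D$ covering the remaining part, which stays at positive distance from $\{t=0\}$ and where $t^{1-2s}$ is a bounded $A_2$ weight bounded away from $0$, so that the interior Harnack inequality and interior De Giorgi--Nash--Moser estimate apply (e.g.\ \cite{FabesKenigCarlosCPDE1982}). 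On each boundary cylinder Lemma~\ref{p:harnack_1} yields a local Harnack inequality $\sup_{Q_{1/2}}U\le C\inf_{Q_{1/2}}U$ and a local $C^\varrho$ bound on $\overline{Q_{1/2}}$; since $B_{R-\delta/2}\setminus B_{r+\delta/2}$ is connected the cover can be taken connected with overlapping pieces, and chaining the finitely many local Harnack inequalities and patching the local Hölder estimates yields the stated inequality $\sup_{K_{\delta/2}}U\le C\inf_{K_{\delta/2}}U$ and the regularity $U\in C^\varrho(\overline{K_{\delta/2}})$.

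The step I expect to be the main obstacle is the bootstrap of the second paragraph: a priori $a$ sits precisely in the scaling‑critical space $L^{\frac{n}{2s}}$, where neither local boundedness nor Harnack's inequality is available, and it is essential here that $b\equiv 0$, so that a single application of Lemma~\ref{l:moser-greater} already produces the \emph{strict} integrability gain $q_1>\frac{2n}{n-2s}$ and thereby pushes $a$ into the subcritical range in one step. The remaining points — the choice of $\rho_0$ from absolute continuity, the finite covering, and the bookkeeping of the constants down to $n$, $s$ and $\delta$ — are routine.
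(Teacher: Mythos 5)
Your proposal follows exactly the route of the paper's own proof: decompose the Neumann datum as $a(x)U(x,0)$ with $a=\lambda|x|^{-2s}+U^{\frac{4s}{n-2s}}(\cdot,0)$ and $b\equiv 0$, observe that $a\in L^{\frac{n}{2s}}_{\rm loc}$ is borderline, use Lemma~\ref{l:moser-greater} (with $b\equiv 0$) to push $U(\cdot,0)$ into $L^{q}$ with $q>\frac{2n}{n-2s}$ and hence $a$ into $L^{p_1}_{\rm loc}$ with $p_1>\frac{n}{2s}$, and then invoke Lemma~\ref{p:harnack_1}. You are simply more explicit than the paper about two points the paper leaves implicit, namely achieving the smallness $\|a^+\|_{L^{n/(2s)}(B_1)}<\delta$ by absolute continuity and rescaling, and the finite covering and chaining argument on $\overline{K_{\delta/2}}$; both refinements are correct and in the spirit of the original.
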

\begin{proof}
Note that $\partial'D=B_R\setminus B_r$, it follows that $\frac{1}{|x|^{n-2s}}\in L^{\infty}(\partial'D)$. Since $U(\,\cdot\,,0)\in H^s(\partial'D)\subset L^{\frac{2n}{n-2s}}$ (\cite[Proposition 2.1]{JinLiXiongJEMS2014}), $U^{\frac{4s}{n-2s}}(\,\cdot\,,0)\in L^{\frac{n}{2s}}(\partial'D)$. Hence from Lemma \ref{l:moser-greater} (in our case $b=0$, so $p$ can be any positive number $> \frac{n}{2s}$), we have
$$U(\,\cdot\,,0)\in L^q(\partial'K_{3\delta/4}),$$
where $q=\min\left(\frac{2(n+1)}{n-2s},\frac{n(p-1)}{(n-2s)p}\cdot\frac{2n}{n-2s}\right)>\frac{2n}{n-2s}$. Therefore
\begin{equation*}
U^{\frac{4s}{n-2s}}(\,\cdot\,,0)\in L^{q'}(\partial'K_{3\delta/4})
\end{equation*}
with $q'>\frac{n}{2s}$. Finally, by Lemma \ref{p:harnack_1}, we obtain the desired estimate.
\end{proof}

\begin{proof}[Proof of Theorem \ref{t:regulatiry} (1)]
As in the proof of Proposition \ref{p:Harnack}, set $a(x) = \frac{\lambda}{|x|^{n-2s}} + U^{\frac{4s}{n-2s}}(x,0)$ and $b(x) = 0$. Then for any $x\in \mathbf R^n\setminus\{0\}$, by Proposition \ref{p:Harnack}, $a(x)\in C^\varrho(\overline{B_r(x)})$ for some small positive number $r$ and $\varrho\in (0, 1)$ which depends only on $n$, $s$, $\delta$, $p$. From Theorem 2.11 of \cite{JinLiXiongJEMS2014}, $U\in C^{\varrho + 2s}(\overline{\mathcal B_{r/2}^+(x)})$. By a bootstrapping argument, we then have $U\in C^{\infty}(\overline{\mathbf R^{n+1}_+}\setminus\{0\})$. This completes the proof.
\end{proof}


\section{Symmetry of the solutions: the method of moving spheres}\label{s:symmetry}

In this section, we prove Theorem \ref{t:regulatiry} (2) by the method of moving spheres. 

\subsection{Technique lemmas}
We begin with some technique lemmas.

Let $X\in \partial\mathbf R^{n+1}_+$, $\rho>0$ and $U_{X,\rho}$ be the Kelvin transformation defined by
\begin{equation*}
U_{X,\rho}(\xi)=\left(\frac{\rho}{|\xi-X|}\right)^{n-2s}U\left(X+\frac{\rho^2(\xi-X)}{|\xi-X|^2}\right),\quad \xi\in \overline{\mathbf R^{n+1}_+}\setminus \{X\}.
\end{equation*}

\begin{lemma}
If $U$ satisfies (\ref{e:mail-extention-c}), then $U_{X,\rho}$ is a solution of the following equation
\begin{equation}\label{e:kte}
\left\{\begin{array}{rl}
         {\rm div}(t^{1-2s}\nabla U_{X_0,\rho})(\xi)=0,\,{\rm in}\,\mathbf R^{n+1}_+, \\
         \displaystyle-\lim_{t\to 0^+}t^{1-2s}\partial_t U_{X_0,\rho}(\xi)=\left(\frac{\rho}{|x-x_0|}\right)^{4s}\frac{\lambda}{|x_{\rho,x_0}|^{2s}}U_{X_0,\rho}(x_{\rho,x_0},0)
+U_{X_0,\rho}^{\frac{n+2s}{n-2s}}(x_{\rho,x_0},0),\,{\rm on}\,\mathbf R^n,
       \end{array}
\right.
\end{equation}
where $\xi=(x,t)$, $X_0=(x_0,0)$ and
\begin{equation}\label{e:xxl}
x_{\rho,x_0}:=x_0+\frac{\rho^2(x-x_0)}{|x-x_0|^2}.
\end{equation}
\end{lemma}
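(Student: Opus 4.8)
The plan is to verify directly that the Kelvin transform $U_{X_0,\rho}$ satisfies the two conditions in \eqref{e:kte}, treating the interior equation and the boundary condition separately. Throughout I write $\Phi(\xi) = X_0 + \rho^2(\xi-X_0)/|\xi-X_0|^2$ for the inversion in the sphere $\partial\mathcal B_\rho(X_0)$, so that $U_{X_0,\rho}(\xi) = (\rho/|\xi-X_0|)^{n-2s}\,U(\Phi(\xi))$. The key algebraic facts I would record first are: $\Phi$ is an involution, $|\Phi(\xi)-X_0| = \rho^2/|\xi-X_0|$, and the conformal-factor identity $|D\Phi(\xi)| $ acts as multiplication by $\rho^2/|\xi-X_0|^2$ on tangent vectors; in particular $\Phi$ maps $\mathbf R^{n+1}_+$ to itself and the $t$-coordinate transforms by $t_{\Phi} = \rho^2 t/|\xi-X_0|^2$, which is exactly what makes the weight $t^{1-2s}$ behave well.

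For the interior equation, the claim is that the operator $\mathrm{div}(t^{1-2s}\nabla\,\cdot\,)$ is conformally covariant under Kelvin transform with weight exponent $n-2s$. I would prove this by a change-of-variables computation: if $V(\xi) = (\rho/|\xi-X_0|)^{n-2s} W(\Phi(\xi))$, then a direct calculation (using the formula for the Laplacian of a radial factor times a composition, together with the degenerate weight) gives
\begin{equation*}
\mathrm{div}\bigl(t^{1-2s}\nabla V\bigr)(\xi) = \left(\frac{\rho}{|\xi-X_0|}\right)^{n+2-2s}\bigl(\mathrm{div}(t^{1-2s}\nabla W)\bigr)(\Phi(\xi)).
\end{equation*}
Hence if $\mathrm{div}(t^{1-2s}\nabla U) = 0$ in $\mathbf R^{n+1}_+$, the same holds for $U_{X_0,\rho}$. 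This is the step I expect to be the main obstacle, since it is the one genuinely non-trivial computation: one must carefully track the cross terms between the gradient of the conformal factor and $\nabla(W\circ\Phi)$, and check that they cancel precisely because of the specific exponent $n-2s$ attached to the weight $t^{1-2s}$; the detailed version of this is exactly what is deferred to Appendix~\ref{s:kt}, so in the body I would state it and refer there.

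For the boundary condition, I would first note that the co-normal derivative also transforms covariantly: the same change of variables shows
\begin{equation*}
-\lim_{t\to 0^+}t^{1-2s}\partial_t U_{X_0,\rho}(x,t) = \left(\frac{\rho}{|x-x_0|}\right)^{n+2s}\Bigl(-\lim_{t\to 0^+}t^{1-2s}\partial_t U\Bigr)(x_{\rho,x_0},0),
\end{equation*}
where $x_{\rho,x_0}$ is as in \eqref{e:xxl}. Then I substitute the boundary condition from \eqref{e:mail-extention-c} for $U$ at the point $x_{\rho,x_0}$, namely $\kappa_s(\lambda |x_{\rho,x_0}|^{-2s} U(x_{\rho,x_0},0) + U^{(n+2s)/(n-2s)}(x_{\rho,x_0},0))$. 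Using $U(x_{\rho,x_0},0) = (|x-x_0|/\rho)^{n-2s} U_{X_0,\rho}(x,0)$, the critical-exponent term produces $(\rho/|x-x_0|)^{n+2s}(|x-x_0|/\rho)^{(n+2s)} U_{X_0,\rho}^{(n+2s)/(n-2s)}(x,0) = U_{X_0,\rho}^{(n+2s)/(n-2s)}(x,0)$, which is the clean conformal invariance of the critical nonlinearity, while the Hardy term keeps the extra factor $(\rho/|x-x_0|)^{n+2s}(|x-x_0|/\rho)^{n-2s} = (\rho/|x-x_0|)^{4s}$, giving exactly the coefficient $(\rho/|x-x_0|)^{4s}\lambda|x_{\rho,x_0}|^{-2s}$ appearing in \eqref{e:kte}. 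Collecting the two identities yields \eqref{e:kte}, and again I would point to Appendix~\ref{s:kt} for the verification of the covariance formulas, since on the trace level these are equivalent to the already-displayed fractional identity \eqref{e:ktfl}.
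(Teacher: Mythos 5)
Your proposal takes essentially the same route as the paper's Appendix~\ref{s:kt}: treat the interior equation and the boundary condition separately, reduce the boundary condition to a covariance identity for the weighted conormal derivative under the Kelvin inversion, substitute the original boundary relation at the inverted point, and then re-express $U$ in terms of $U_{X_0,\rho}$ to collect the conformal factors. Your conormal covariance factor $\left(\tfrac{\rho}{|x-x_0|}\right)^{n+2s}$ is exactly what the paper obtains by the direct product-rule and limit computation, and your final exponent bookkeeping is correct: the critical nonlinearity is conformally invariant (the factor $(\rho/|x-x_0|)^{n+2s}(|x-x_0|/\rho)^{n+2s}$ collapses to $1$), while the Hardy term retains the residual factor $(\rho/|x-x_0|)^{n+2s}(|x-x_0|/\rho)^{n-2s}=(\rho/|x-x_0|)^{4s}$. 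The only slip is in the claimed interior covariance exponent: if $V(\xi)=(\rho/|\xi-X_0|)^{n-2s}W(\Phi(\xi))$ then one should have
\begin{equation*}
\mathrm{div}\bigl(t^{1-2s}\nabla V\bigr)(\xi)=\left(\frac{\rho}{|\xi-X_0|}\right)^{n+2+2s}\bigl(\mathrm{div}(t^{1-2s}\nabla W)\bigr)(\Phi(\xi)),
\end{equation*}
with exponent $n+2+2s$ rather than your $n+2-2s$. A scaling check (dilations give $L(\lambda^kW(\lambda\,\cdot))=\lambda^{k+1+2s}(LW)(\lambda\,\cdot)$, hence with $k=n-2s$ and local scale $\lambda=(\rho/r)^2$ the factor is $(\rho/r)^{n+2+2s}$; similarly the conormal derivative picks up $\lambda^{k+2s}$, giving $(\rho/r)^{n+2s}$ and confirming the gap of $2$ in the exponents) settles this. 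The slip is harmless for the lemma since the interior equation is $=0$ and any nonzero conformal factor would do, but it is worth correcting.
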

The result of this lemma is the case of $\alpha=2s$, $p=2^*(s)$ in Lemma \ref{l:akte}. For the detailed proof, see Appendix \ref{s:kt}.
\begin{lemma}\label{l:kxx0l}
Let $0<\rho<|x_0|$. Then one has
\begin{equation}\label{e:kxgl}
\left(\frac{\rho}{|x-x_0|}\right)^{4s}\left(\frac{1}{|x_{\rho,x_0}|^{2s}}\right)\le \frac{1}{|x|^{2s}},\quad \forall\, \rho<|x-x_0|<|x_0|
\end{equation}
and
\begin{equation}\label{e:kxll}
\left(\frac{\rho}{|x-x_0|}\right)^{4s}\left(\frac{1}{|x_{\rho,x_0}|^{2s}}\right)\ge \frac{1}{|x|^{2s}},\quad \forall\, 0<|x-x_0|<\rho.
\end{equation}
\end{lemma}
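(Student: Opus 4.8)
The plan is to reduce both inequalities to an elementary estimate comparing $|x_{\rho,x_0}|$ with $|x|$ and $|x-x_0|$. Writing $y = x - x_0$ for brevity, so that $x_{\rho,x_0} = x_0 + \rho^2 y/|y|^2$, I first compute
\begin{equation*}
|x_{\rho,x_0}|^2 = |x_0|^2 + \frac{2\rho^2\, x_0\cdot y}{|y|^2} + \frac{\rho^4}{|y|^2}
= \frac{1}{|y|^2}\Bigl(|x_0|^2|y|^2 + 2\rho^2\, x_0\cdot y + \rho^4\Bigr).
\end{equation*}
Since $x_0\cdot y = x_0\cdot x - |x_0|^2$ and $|x|^2 = |x_0|^2 + 2x_0\cdot y + |y|^2$, one can rearrange the bracket. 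In fact the cleanest route is to recognize the identity
\begin{equation*}
|x|^2\,\bigl(|x-x_0|^2 - \rho^2\bigr)\bigl(|x_0|^2 - \rho^2\bigr)
= |x_0|^2\,|x-x_0|^4\left(\left(\tfrac{\rho}{|x-x_0|}\right)^{2}\tfrac{1}{|x_{\rho,x_0}|^{2}} \cdot |x|^2 - \left(\tfrac{\rho}{|x-x_0|}\right)^{?}\cdots\right),
\end{equation*}
which I would verify directly by expanding; the point is that the sign of the quantity
\[
|x|^{2}\left(\tfrac{\rho}{|x-x_0|}\right)^{2}\tfrac{1}{|x_{\rho,x_0}|^{2}} - 1
\]
is governed precisely by the sign of $(\rho^2 - |x-x_0|^2)(\,|x_0|^2 - \rho^2)$ together with the factor $|x|^2>0$.

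Concretely, I would carry out the steps as follows. First, establish the algebraic identity
\begin{equation*}
\left(\frac{\rho}{|x-x_0|}\right)^{2}\frac{|x|^{2}}{|x_{\rho,x_0}|^{2}} - 1
= \frac{\rho^{2}|x|^{2} - |x-x_0|^{2}|x_{\rho,x_0}|^{2}}{|x-x_0|^{2}|x_{\rho,x_0}|^{2}},
\end{equation*}
and then simplify the numerator $\rho^{2}|x|^{2} - |x-x_0|^{2}|x_{\rho,x_0}|^{2}$ using the expression for $|x_{\rho,x_0}|^2$ above; after the dust settles this numerator equals $\dfrac{(\rho^{2} - |x-x_0|^{2})(|x_0|^{2} - \rho^{2})}{1}$ up to a manifestly positive factor (I expect exactly $(\rho^{2}-|x-x_0|^{2})(|x_0|^{2}-\rho^{2})$ after cancellation, since $|x|^2 \to |x-x_0|^2$-type terms match). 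Second, observe that under the hypothesis $0<\rho<|x_0|$ the factor $|x_0|^{2}-\rho^{2}$ is strictly positive. Hence the sign of the left-hand side of the displayed identity is exactly the sign of $\rho^{2}-|x-x_0|^{2}$. Third, raise to the power $s>0$: when $\rho<|x-x_0|<|x_0|$ the numerator is negative, giving $\bigl(\tfrac{\rho}{|x-x_0|}\bigr)^{2}\tfrac{|x|^2}{|x_{\rho,x_0}|^{2}}\le 1$, i.e. $\bigl(\tfrac{\rho}{|x-x_0|}\bigr)^{4s}\tfrac{1}{|x_{\rho,x_0}|^{2s}}\le \tfrac{1}{|x|^{2s}}$, which is \eqref{e:kxgl}; when $0<|x-x_0|<\rho$ the numerator is positive, yielding the reverse inequality \eqref{e:kxll}.

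The only real work is the bookkeeping in the algebraic identity, and I expect the main (minor) obstacle to be simplifying $\rho^{2}|x|^{2} - |x-x_0|^{2}|x_{\rho,x_0}|^{2}$ cleanly; a slick way to avoid brute-force expansion is to note that the map $x \mapsto x_{\rho,x_0}$ is the inversion centered at $x_0$ of radius $\rho$, which is an involution and satisfies $|x_{\rho,x_0} - x_0| = \rho^2/|x-x_0|$ and, more usefully, the conformal relation $|x_{\rho,x_0} - z_{\rho,x_0}| = \dfrac{\rho^{2}|x-z|}{|x-x_0|\,|z-x_0|}$ for any two points; applying this with $z=0$ (so that $z_{\rho,x_0} = x_0 + \rho^2(-x_0)/|x_0|^2$, whose norm is $\bigl||x_0| - \rho^2/|x_0|\bigr| = \bigl|\,|x_0|^2-\rho^2\,\bigr|/|x_0|$) gives directly
\begin{equation*}
|x_{\rho,x_0}| \le |x_{\rho,x_0} - z_{\rho,x_0}| + |z_{\rho,x_0}| = \frac{\rho^{2}|x|}{|x-x_0|\,|x_0|} + \frac{|x_0|^2-\rho^2}{|x_0|},
\end{equation*}
and a matching lower bound via the reverse triangle inequality; squaring and comparing is then routine but still requires care with the cross term. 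Whichever route, once the sign of $\rho^2-|x-x_0|^2$ is isolated the conclusion is immediate from monotonicity of $t\mapsto t^{s}$ on $(0,\infty)$.
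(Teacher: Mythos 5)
Your central identity,
\begin{equation*}
\rho^{2}|x|^{2} - |x-x_0|^{2}\,|x_{\rho,x_0}|^{2} \;=\; \bigl(\rho^{2}-|x-x_0|^{2}\bigr)\bigl(|x_0|^{2}-\rho^{2}\bigr),
\end{equation*}
is correct (expanding with $y=x-x_0$ the cross terms $2\rho^2\langle x_0,y\rangle$ cancel exactly, leaving $\rho^2(|x_0|^2-\rho^2)-|y|^2(|x_0|^2-\rho^2)$), and it is genuinely cleaner than the paper's computation. The paper instead works with the quantity $\bigl|x+(|x-x_0|^2/\rho^2-1)x_0\bigr|^2-|x|^2$, which equals $\rho^{-4}\bigl(|x-x_0|^4|x_{\rho,x_0}|^2-\rho^4|x|^2\bigr)$; after factoring out $a=|x-x_0|^2/\rho^2-1$ a residual factor remains, which the paper bounds below by $|x|^2$ using $\rho<|x_0|$. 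Your factorization has no residual factor to control, so your route is more elementary in that respect.

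However, there is an exponent slip in your last step that leaves a gap as written. Raising $\bigl(\rho/|x-x_0|\bigr)^{2}\,|x|^{2}/|x_{\rho,x_0}|^{2}\le 1$ to the power $s$ yields
\begin{equation*}
\left(\frac{\rho}{|x-x_0|}\right)^{2s}\frac{1}{|x_{\rho,x_0}|^{2s}}\le \frac{1}{|x|^{2s}},
\end{equation*}
with exponent $2s$ on $\rho/|x-x_0|$, \emph{not} the $4s$ appearing in the lemma; your ``i.e.'' equating the two is false. What you have actually proved is a stronger inequality. To close the argument you need one more line: in the regime $\rho<|x-x_0|$ one has $\rho/|x-x_0|<1$, so $\bigl(\rho/|x-x_0|\bigr)^{4s}\le \bigl(\rho/|x-x_0|\bigr)^{2s}$ and the stated \eqref{e:kxgl} follows; in the regime $|x-x_0|<\rho$ one has $\rho/|x-x_0|>1$, so $\bigl(\rho/|x-x_0|\bigr)^{4s}\ge \bigl(\rho/|x-x_0|\bigr)^{2s}$ and \eqref{e:kxll} follows. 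Alternatively, if you insist on matching the lemma's exponents directly, the relevant numerator is $\rho^{4}|x|^{2}-|x-x_0|^{4}|x_{\rho,x_0}|^{2}$, which does \emph{not} collapse to $(\rho^2-|x-x_0|^2)(|x_0|^2-\rho^2)$; it factors as $(\rho^{2}-|x-x_0|^{2})\bigl[|x_0|^{2}(\rho^{2}+|x-x_0|^{2})+2\rho^{2}\langle x_0,x-x_0\rangle\bigr]$, and you then need the additional observation that the bracket is positive when $\rho<|x_0|$ (e.g.\ it equals $\rho^2(|x_0|-|x-x_0|)^2 + |x-x_0|^2(|x_0|^2-\rho^2)$ up to replacing $\langle x_0, x-x_0\rangle$ by $-|x_0||x-x_0|$ in the worst case) --- this extra positivity step is exactly what the paper's proof supplies for its version of the bracket.
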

\begin{proof}
A direct computation yields that
\begin{eqnarray*}
&&\left|x+\left(\frac{|x-x_0|^2}{\rho^2}-1\right)x_0\right|^2-|x|^2\\
&&=\left(\frac{|x-x_0|^2}{\rho^2}-1\right)\left(2\langle x,x_0\rangle
-\langle x_0,x_0\rangle+\left(\langle x,x\rangle-2\langle x,x_0\rangle +\langle x_0,x_0\rangle\right)\frac{\langle x_0,x_0\rangle}{\rho^2}\right)\notag.
\end{eqnarray*}
Since $0<\rho<|x_0|$, it holds that
\begin{eqnarray*}
&&2\langle x,x_0\rangle
-\langle x_0,x_0\rangle+\left(\langle x,x\rangle-2\langle x,x_0\rangle +\langle x_0,x_0\rangle\right)\frac{\langle x_0,x_0\rangle}{\rho^2}\notag\\
&&\ge 2\langle x,x_0\rangle
-\langle x_0,x_0\rangle+\left(\langle x,x\rangle-2\langle x,x_0\rangle +\langle x_0,x_0\rangle\right)\notag\\
&&=\langle x,x\rangle> 0.
\end{eqnarray*}
Hence, for all $\rho<|x-x_0|<|x_0|$,
\begin{eqnarray*}
\left|x+\left(\frac{|x-x_0|^2}{\rho^2}-1\right)x_0\right|^2-|x|^2\ge 0
\end{eqnarray*}
and,
for all $0<|x-x_0|<\rho$,
\begin{eqnarray*}
\left|x+\left(\frac{|x-x_0|^2}{\rho^2}-1\right)x_0\right|^2-|x|^2\le 0.
\end{eqnarray*}
Then we have (\ref{e:kxgl}) and (\ref{e:kxll}) from (\ref{e:xxl}). This completes the proof.
\end{proof}

Next, we need two versions of maximum principle.
\begin{lemma}\label{l:non-negative}{\rm (\cite[Lemma 2.5]{JinLiXiongJEMS2014})}
Let $U\in H^1(D,t^{1-2s})$ be a weak super-solution of (\ref{e:extention_bounded}) in $D$ with $a\equiv b\equiv0$. If $U\ge 0$ on $\partial'' D$ in the trace sense, then $U\ge 0$ in $D$.
\end{lemma}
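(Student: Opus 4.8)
The plan is to test the super-solution inequality against the negative part of $U$. Write $U^{-}=\max(-U,0)$. Since $t^{1-2s}$ is a Muckenhoupt $A_2$ weight, the weighted Sobolev space $H^1(D,t^{1-2s})$ is stable under this truncation (Stampacchia's chain rule for Sobolev functions carries over to the weighted setting), so $U^{-}\in H^1(D,t^{1-2s})$ with $\nabla U^{-}=-\chi_{\{U<0\}}\nabla U$. Moreover $U^{-}\ge 0$ everywhere, and the hypothesis that $U\ge 0$ on $\partial''D$ in the trace sense says exactly that the trace of $U^{-}$ on $\partial''D$ vanishes. Hence $U^{-}$ lies in the $H^1(D,t^{1-2s})$-closure of the nonnegative functions in $C_c^{\infty}(D\cup\partial'D)$; fix $V_k\in C_c^{\infty}(D\cup\partial'D)$ with $V_k\ge 0$ and $V_k\to U^{-}$ in $H^1(D,t^{1-2s})$.

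First I would put each $V_k$ into the defining inequality of a weak super-solution (Definition \ref{d:weak-solution} with $a\equiv b\equiv 0$), obtaining $\int_D t^{1-2s}\nabla U\cdot\nabla V_k\ge 0$, and then let $k\to\infty$; since $\nabla U\in L^2(D,t^{1-2s})$ and $\nabla V_k\to\nabla U^{-}$ in $L^2(D,t^{1-2s})$, continuity of the weighted bilinear form gives
\[
\int_D t^{1-2s}\,\nabla U\cdot\nabla U^{-}\ge 0 .
\]
Next I would simplify the integrand: from $\nabla U^{-}=-\chi_{\{U<0\}}\nabla U$ one has $\nabla U\cdot\nabla U^{-}=-\chi_{\{U<0\}}|\nabla U|^{2}=-|\nabla U^{-}|^{2}$, whence
\[
0\le \int_D t^{1-2s}\,\nabla U\cdot\nabla U^{-}=-\int_D t^{1-2s}|\nabla U^{-}|^{2}\le 0 ,
\]
so $\nabla U^{-}=0$ a.e. in $D$. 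Finally, $\nabla U^{-}=0$ forces $U^{-}$ to be constant on the (connected) domain $D$, and since the trace of $U^{-}$ vanishes on $\partial''D\ne\emptyset$ this constant is $0$; thus $U^{-}\equiv 0$, i.e. $U\ge 0$ in $D$.

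The main obstacle — in fact the only step that is not a one-line energy computation — is justifying that $U^{-}$ is an admissible test function: one must check that $U^{-}\in H^1(D,t^{1-2s})$ with the stated gradient, and that its trace on $\partial''D$ is zero, so that $U^{-}$ is an $H^1(D,t^{1-2s})$-limit of nonnegative functions in $C_c^{\infty}(D\cup\partial'D)$ and may legitimately replace $V$ in Definition \ref{d:weak-solution}. All of this rests on the $A_2$-property of $t^{1-2s}$ — the chain rule for truncations, the density of $C_c^{\infty}(D\cup\partial'D)$, and the weighted Poincaré inequality for functions vanishing on $\partial''D$ (see \cite{FabesKenigCarlosCPDE1982}); granting these, the argument concludes as above.
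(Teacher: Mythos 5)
The paper does not reproduce a proof of this lemma; it simply cites \cite[Lemma 2.5]{JinLiXiongJEMS2014}. Your energy argument---testing the supersolution inequality against $U^{-}$, obtaining $\int_D t^{1-2s}\nabla U\cdot\nabla U^{-}=-\int_D t^{1-2s}|\nabla U^{-}|^2\ge 0$, and then using the vanishing trace on $\partial''D$---is the standard proof of this type of maximum principle and is correct; it is what the cited reference does. Two small remarks. First, your final step deduces $U^{-}\equiv 0$ from $\nabla U^{-}=0$ by invoking connectedness of $D$, which the lemma does not assume. This can be avoided: since $U^{-}$ is an $H^1(D,t^{1-2s})$-limit of $V_k\in C_c^{\infty}(D\cup\partial'D)$ and $\|\nabla V_k\|_{L^2(D,t^{1-2s})}\to\|\nabla U^{-}\|_{L^2(D,t^{1-2s})}=0$, one may extend each $V_k$ by zero to $\mathbf R^{n+1}_+$, apply the weighted Sobolev inequality of Lemma \ref{l:ewss}, and then use H\"older on the bounded set $D$ to conclude $\|V_k\|_{L^2(D,t^{1-2s})}\to 0$, whence $U^{-}=0$ without any connectedness hypothesis. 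Second, the identification of the subspace of functions with vanishing trace on $\partial''D$ with the $H^1(D,t^{1-2s})$-closure of $C_c^{\infty}(D\cup\partial'D)$ is exactly the technical point you flag as ``the main obstacle''; it is indeed the substantive step, it holds for $A_2$ weights on Lipschitz domains, and your deferral to \cite{FabesKenigCarlosCPDE1982} is the right reference for it.
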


\begin{lemma}\label{p:pn0}\cite[Proposition 3.1]{JinLiXiongJEMS2014}
Assume that $U\in H^1(\mathcal B^+_1\setminus\overline{\mathcal B}^+_{\varepsilon},|t|^{1-2s})$ is a solution of the following boundary value problem
\begin{equation*}
\left\{\begin{array}{ll}
         {\rm div}(t^{1-2s}\nabla U)\le 0 & \mbox{ in }\mathcal B^+_1, \\
         -\displaystyle\lim_{t\to 0}t^{1-2s}\partial_t U(x,t)\ge 0 & \mbox{ on } B_1\setminus\bar{B}_{\varepsilon},
       \end{array}
\right.
\end{equation*}
for every $\varepsilon\in (0,1)$. If $U\in C(\mathcal B^+_1\cup B_1\setminus\{0\})$ and $U>0$ in $\mathcal B^+_1\cup B_1\setminus\{0\}$, then
\begin{equation*}
\displaystyle \liminf_{|X|\to 0} U(X)>0.
\end{equation*}
\end{lemma}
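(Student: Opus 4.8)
The plan is to trap $U$ from below near the origin by comparing it with an explicit barrier modeled on the fundamental solution $|X|^{2s-n}$ of the degenerate operator, and then to let the inner radius of the comparison region shrink to $0$. I would begin by fixing the scale $1/2$: since $U$ is continuous and strictly positive on $\mathcal B^+_1\cup B_1\setminus\{0\}$ and the closed half-sphere $\overline{\mathbf S^+_{1/2}}$ is a compact subset of it, the constant $m:=\min_{\overline{\mathbf S^+_{1/2}}}U$ is strictly positive. (Throughout I assume $n>2s$, as is the case whenever this lemma is applied in the present paper.)

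For each $\varepsilon\in(0,1/2)$ I would then work with the barrier
\begin{equation*}
\Psi_\varepsilon(X):=m\,\frac{|X|^{2s-n}-\varepsilon^{2s-n}}{(1/2)^{2s-n}-\varepsilon^{2s-n}},\qquad X\in\overline{\mathcal B^+_{1/2}}\setminus\overline{\mathcal B^+_\varepsilon}.
\end{equation*}
A direct computation gives ${\rm div}(t^{1-2s}\nabla(|X|^{2s-n}))=0$ in $\mathbf R^{n+1}_+\setminus\{0\}$ and, since $2-2s>0$, $t^{1-2s}\partial_t(|X|^{2s-n})=(2s-n)|X|^{2s-n-2}t^{2-2s}\to0$ as $t\to0^+$ for $x\ne0$; constants enjoy both properties as well, so $\Psi_\varepsilon$ solves ${\rm div}(t^{1-2s}\nabla\Psi_\varepsilon)=0$ in $\mathcal B^+_{1/2}\setminus\overline{\mathcal B^+_\varepsilon}$ with $-\lim_{t\to0^+}t^{1-2s}\partial_t\Psi_\varepsilon=0$ on $B_{1/2}\setminus\overline{B_\varepsilon}$, and moreover $0\le\Psi_\varepsilon\le m$, $\Psi_\varepsilon\equiv m$ on $\mathbf S^+_{1/2}$ and $\Psi_\varepsilon\equiv0$ on $\mathbf S^+_\varepsilon$. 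Setting $D_\varepsilon:=\mathcal B^+_{1/2}\setminus\overline{\mathcal B^+_\varepsilon}$ and $W:=U-\Psi_\varepsilon$, I would apply the weak maximum principle of Lemma \ref{l:non-negative}: both $U$ and $\Psi_\varepsilon$ belong to $H^1(D_\varepsilon,t^{1-2s})$, $W$ is a weak supersolution of the homogeneous problem on $D_\varepsilon$, and on $\partial''D_\varepsilon=\mathbf S^+_{1/2}\cup\mathbf S^+_\varepsilon$ one has $W=U-m\ge0$ on the outer half-sphere and $W=U>0$ on the inner half-sphere; hence $W\ge0$ on $D_\varepsilon$, that is, $U\ge\Psi_\varepsilon$ there.

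Finally, I would fix $X$ with $0<|X|<1/2$ and observe that $U(X)\ge\Psi_\varepsilon(X)$ for all $\varepsilon\in(0,|X|)$; dividing numerator and denominator of $\Psi_\varepsilon(X)$ by $\varepsilon^{2s-n}$ and using $\varepsilon^{2s-n}\to+\infty$ as $\varepsilon\to0^+$ shows $\Psi_\varepsilon(X)\to m$, whence $U(X)\ge m$ throughout $\mathcal B^+_{1/2}\setminus\{0\}$ and therefore $\liminf_{|X|\to0}U(X)\ge m>0$. The argument has no serious obstacle; the two points requiring care are the verification that $|X|^{2s-n}$ solves the homogeneous extension problem with vanishing Neumann datum (which rests entirely on $2-2s>0$) and the observation that Lemma \ref{l:non-negative} is applicable on each $D_\varepsilon$ — which holds precisely because the annulus stays away from the origin, so $U$ and the barrier are genuine elements of $H^1(D_\varepsilon,t^{1-2s})$ and the singularity never enters the comparison. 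The passage $\varepsilon\to0$ is exactly what converts this family of barriers, each vanishing on the shrinking inner sphere, into the single origin-independent lower bound $m$.
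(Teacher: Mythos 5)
Your proof is correct and follows the same strategy as the cited source \cite[Proposition 3.1]{JinLiXiongJEMS2014} (the paper itself gives no proof and just cites): a barrier built from the fundamental solution $|X|^{2s-n}$ of the extension operator, applied on shrinking annuli via the weak maximum principle of Lemma~\ref{l:non-negative}, with $\varepsilon\to0$ converting the family of barriers into the uniform bound $U\ge m$ on $\mathcal B^+_{1/2}\setminus\{0\}$. All the ingredients check out: $|X|^{2s-n}$ is indeed annihilated by ${\rm div}(t^{1-2s}\nabla\cdot)$ away from the origin and has vanishing weighted Neumann datum since $2-2s>0$; $U$ and $\Psi_\varepsilon$ lie in $H^1(D_\varepsilon,t^{1-2s})$ because $D_\varepsilon$ avoids the puncture; and the boundary inequalities on $\mathbf S^+_{1/2}$ and $\mathbf S^+_\varepsilon$ hold by continuity and positivity of $U$ on the compact half-spheres.
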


\subsection{The method of moving spheres}
We verify the symmetry property of the solutions for (\ref{e:mail-extention-c}).
Firstly, we prove
\begin{prop}\label{p:beginning}
For each $x_0\ne 0$, there exists a constant $\rho(x_0)>0$ depending on $x_0$ such that for any $0<\rho<\rho(x_0)$, it holds that
\begin{equation*}
U_{X_0,\rho}(\xi)\le U(\xi)\mbox{ in } \mathbf R^{n+1}_+\setminus (\overline{\mathcal B}^+_{\rho}(X_0)\cup\{0\}),
\end{equation*}
where $X_0=(x_0,0)$.
\end{prop}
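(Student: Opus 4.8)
The plan is to establish the ``starting position'' of the moving spheres method: for each fixed $x_0\neq 0$, the Kelvin transform $U_{X_0,\rho}$ lies below $U$ outside the small half-ball $\overline{\mathcal B}^+_\rho(X_0)$ whenever $\rho$ is sufficiently small. I would first set $W_{X_0,\rho}:=U-U_{X_0,\rho}$ and observe, using the equation \eqref{e:mail-extention-c} for $U$ and \eqref{e:kte} for $U_{X_0,\rho}$, that $W_{X_0,\rho}$ is $t^{1-2s}$-harmonic in $\mathbf R^{n+1}_+$ and on $\partial\mathbf R^{n+1}_+\setminus(\overline{B}_\rho(x_0)\cup\{0\})$ satisfies a boundary condition of the form
\begin{equation*}
-\lim_{t\to0^+}t^{1-2s}\partial_t W_{X_0,\rho}
= c(x)\,W_{X_0,\rho}(x,0) + E(x),
\end{equation*}
where $c(x)$ collects the mean-value coefficient from $U^{\frac{n+2s}{n-2s}}-U_{X_0,\rho}^{\frac{n+2s}{n-2s}}$ together with $\lambda/|x|^{2s}$, and the ``error'' term $E(x)$ is precisely the defect of conformal invariance, namely
\begin{equation*}
E(x)=\lambda\left(\frac{1}{|x|^{2s}}-\left(\frac{\rho}{|x-x_0|}\right)^{4s}\frac{1}{|x_{\rho,x_0}|^{2s}}\right)U_{X_0,\rho}(x_{\rho,x_0},0).
\end{equation*}
By Lemma \ref{l:kxx0l} (the first inequality \eqref{e:kxgl}, applicable since we will take $\rho<|x_0|$), we have $E(x)\geq 0$ on the annular region $\rho<|x-x_0|<|x_0|$, and since $\lambda\geq 0$ and $U_{X_0,\rho}\geq 0$, the term $E$ has a favourable sign there; near infinity $E$ decays and one must control its sign or smallness separately.

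Next I would handle the behaviour at the two singular points, $0$ and $X_0$. Near $X_0$: since $U$ is smooth and positive near $x_0$ (regularity part of Theorem \ref{t:regulatiry}, already proved), $U$ is bounded below by a positive constant on $\partial''\mathcal B^+_{\rho}(X_0)=\mathbf S^+_\rho(X_0)$, whereas $U_{X_0,\rho}=U$ on that sphere by the definition of the Kelvin transform, so $W_{X_0,\rho}=0$ there; more importantly, for $\xi$ just outside $\mathcal B^+_\rho(X_0)$ one exploits the standard fact that $U_{X_0,\rho}(\xi)\to U(\xi)$ as $|\xi-X_0|\downarrow\rho$ and that $\partial_r\big(U_{X_0,\rho}-U\big)$ at $r=\rho$ has the right sign for small $\rho$ because the radial derivative of $U$ at $x_0$ is $O(1)$ while the Kelvin factor contributes a term $\sim (n-2s)/\rho$. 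Near $0$: $U_{X_0,\rho}$ is smooth and bounded in a fixed neighbourhood of the origin (the origin is mapped by the Kelvin inversion to the finite point $x_0+\rho^2(-x_0)/|x_0|^2\ne 0$, where $U$ is finite), while $U$ may be large or small near $0$; here Lemma \ref{p:pn0} is the right tool to guarantee $\liminf_{|X|\to0}W_{X_0,\rho}(X)>0$ once we know $W_{X_0,\rho}>0$ on a punctured neighbourhood, so the origin does not obstruct the maximum principle. At infinity, both $U$ and $U_{X_0,\rho}$ decay (indeed $U_{X_0,\rho}(\xi)=O(|\xi|^{-(n-2s)})$), and a comparison of decay rates shows $W_{X_0,\rho}\geq 0$ for $|\xi|$ large when $\rho$ is small, because the leading coefficient of $U$ at infinity dominates that of $U_{X_0,\rho}$, which is $\sim\rho^{n-2s}U(x_0)\to 0$.

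The core of the argument is then a maximum-principle / narrow-domain argument on the region $\Omega_\rho:=\mathbf R^{n+1}_+\setminus(\overline{\mathcal B}^+_\rho(X_0)\cup\{0\})$. I would fix $\rho_0\in(0,|x_0|)$ small, split $\Omega_\rho$ into the ``near zone'' $\{\rho<|\xi-X_0|<\sigma\}$ for a suitable fixed $\sigma$ and the ``far zone'' $\{|\xi-X_0|\geq\sigma\}$, and on each piece apply Lemma \ref{l:non-negative} (the weak maximum principle for $t^{1-2s}$-superharmonic functions vanishing on the parabolic-type boundary) after absorbing the zeroth-order coefficient $c(x)$: in the near zone this is possible because the domain is thin, so a Poincaré/trace inequality beats $\|c^+\|$; in the far zone one uses the decay established above together with the nonnegativity of $E$ and of $c$-contributions, or introduces a comparison barrier $|\xi-X_0|^{-(n-2s)}$-type function to close the estimate. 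The main obstacle, and the place where the hypothesis $\lambda\geq 0$ and Lemma \ref{l:kxx0l} are genuinely used, is controlling the error term $E(x)$ coming from the failure of exact conformal invariance of \eqref{e:main3}: one must ensure that the region where $E$ could have the wrong sign (roughly $|x-x_0|>|x_0|$) is exactly the region where the decay of $U_{X_0,\rho}$ is strong enough, so that $W_{X_0,\rho}\geq 0$ survives there for all small $\rho$; balancing these two competing effects, uniformly in $\rho\to 0$, is the delicate part, and I expect the bulk of the technical work to go into making that balance quantitative.
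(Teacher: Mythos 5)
Your high-level template --- narrow-domain estimate near $X_0$, global comparison elsewhere, Lemma \ref{l:kxx0l} to control the defect of conformal invariance $E(x)$ --- does match the paper's strategy, and you have correctly identified the main structural pieces (Lemmas \ref{l:kxx0l}, \ref{l:non-negative}, \ref{p:pn0}, and the narrow-domain technique). But there are two genuine gaps in the way you propose to close the argument away from $X_0$, and both stem from missing the paper's one slick move: the explicit radial barrier.

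In the paper's Lemma \ref{l:xigl2}, one introduces
$\varphi(\xi)=\bigl(\rho_2/|\xi-X_0|\bigr)^{n-2s}\inf_{\partial''\mathcal B_{\rho_2}(X_0)}U$,
which is $t^{1-2s}$-harmonic in $\mathbf R^{n+1}_+\setminus\mathcal B^+_{\rho_2}(X_0)$ with \emph{zero} Neumann data. Since $U$ is a supersolution of that homogeneous problem (its Neumann data $\kappa_s(\lambda|x|^{-2s}U+U^{\frac{n+2s}{n-2s}})\ge 0$), the maximum principle gives $U\ge\varphi$ on the whole exterior region, and then $U_{X_0,\rho}\le(\rho_0/|\xi-X_0|)^{n-2s}\sup U\le\varphi\le U$ for $\rho_0$ small. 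This entirely decouples the far zone from the boundary nonlinearity and from the error term $E$: no ``balance'' between the sign of $E$ and decay rates is needed, contrary to what you anticipate as the ``delicate part.'' Your two weak points are precisely where this barrier would have saved you. First, you propose applying Lemma \ref{p:pn0} to $W=U-U_{X_0,\rho}$ near $0$, but that lemma takes $W>0$ near $0$ \emph{as a hypothesis} and also requires $W$ to be a supersolution there; the latter fails because the sign of $E$ given by Lemma \ref{l:kxx0l} is only favourable on $|x-x_0|<|x_0|$, and near $x=0$ one has $|x-x_0|\approx|x_0|$, so the sign of $E$ is not controlled. This makes the intended application of Lemma \ref{p:pn0} circular and, worse, inapplicable. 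The paper applies Lemma \ref{p:pn0} to $U$ itself (already known positive from the regularity/Harnack step) to get $\liminf_{|X|\to 0}U>0$, and then merely compares $U$ with the small constant $\varphi(\xi)\approx(\rho_2/|X_0|)^{n-2s}\inf U$ near the origin. Second, your ``comparison of decay rates at infinity'' presumes a quantitative lower bound for $U$ of order $|\xi|^{-(n-2s)}$, which is not available a priori from $U\in\dot H^1(\mathbf R^{n+1}_+,t^{1-2s})$; the barrier comparison $U\ge\varphi$ is exactly what \emph{produces} such a lower bound rather than assuming it. Finally, the Hopf-type estimate you sketch for $\partial_r(U_{X_0,\rho}-U)$ at $r=\rho$ is superfluous: in Lemma \ref{l:l2small} the paper tests the equation for $U_{X_0,\rho}-U$ against $(U_{X_0,\rho}-U)^+$ on the narrow annulus $\mathcal B^+_{\rho_2}(X_0)\setminus\mathcal B^+_{\rho}(X_0)$ and absorbs the Hölder-dual coefficient by shrinking $\rho_2$, with the needed zero boundary values on $\partial''$ supplied by Lemma \ref{l:lemma5.2} (outer sphere, via Harnack) and by the trivial equality $U_{X_0,\rho}=U$ on the inner sphere; no normal-derivative sign argument is required.
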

We divide the proof by several lemmas.
\begin{lemma}\label{l:lemma5.2}
For each $\rho_2\in (0,|X_0|)$, there exists $\rho_1 > 0$ small enough such that, when $\rho\in (0,\rho_1)$,
\begin{equation*}
U_{X_0,\rho}\le U\mbox{ on }\partial''\left[\mathcal B_{\rho_2}^+(X_0)\setminus\mathcal B_{\rho}^+(X_0)\right].
\end{equation*}
\end{lemma}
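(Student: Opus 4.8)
\textbf{Proof proposal for Lemma \ref{l:lemma5.2}.}

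The plan is to compare $U$ with its Kelvin transform $U_{X_0,\rho}$ on the outer boundary portion $\partial''\left[\mathcal B_{\rho_2}^+(X_0)\setminus\mathcal B_{\rho}^+(X_0)\right] = \mathbf S_{\rho_2}^+(X_0)$, i.e. on the sphere $\{|\xi - X_0| = \rho_2,\ t \ge 0\}$. The key observation is that on this set the Kelvin transform takes the explicit form $U_{X_0,\rho}(\xi) = (\rho/\rho_2)^{n-2s}\, U(X_0 + (\rho^2/\rho_2^2)(\xi - X_0))$, and the argument $X_0 + (\rho^2/\rho_2^2)(\xi - X_0)$ lies in $\overline{\mathcal B_{\rho^2/\rho_2}^+(X_0)}$, a small neighborhood of $X_0$ whose radius shrinks as $\rho \to 0$. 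First I would use the regularity and positivity established in Theorem \ref{t:regulatiry}(1): since $x_0 \ne 0$, $U$ is continuous and strictly positive on a closed half-ball $\overline{\mathcal B_{r_0}^+(X_0)}$ for some $r_0 < |X_0|$, hence there is a constant $m > 0$ with $U \ge m$ on that half-ball; likewise $U$ has a finite maximum $M_2 := \max_{\mathbf S_{\rho_2}^+(X_0)} U < \infty$ by continuity on the compact set $\mathbf S_{\rho_2}^+(X_0)$ (which avoids the origin because $\rho_2 < |X_0|$).

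Next I would combine these bounds. For $\xi \in \mathbf S_{\rho_2}^+(X_0)$ and $\rho$ small enough that $\rho^2/\rho_2 < r_0$, the image point lies in $\overline{\mathcal B_{r_0}^+(X_0)}$, so
\begin{equation*}
U_{X_0,\rho}(\xi) = \left(\frac{\rho}{\rho_2}\right)^{n-2s} U\!\left(X_0 + \frac{\rho^2}{\rho_2^2}(\xi - X_0)\right) \le \left(\frac{\rho}{\rho_2}\right)^{n-2s} \max_{\overline{\mathcal B_{r_0}^+(X_0)}} U.
\end{equation*}
Since $n - 2s > 0$, the prefactor $(\rho/\rho_2)^{n-2s} \to 0$ as $\rho \to 0$, so for $\rho_1$ sufficiently small and all $\rho \in (0,\rho_1)$ the right-hand side is $\le m \le \min_{\mathbf S_{\rho_2}^+(X_0)} U \le U(\xi)$. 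This gives $U_{X_0,\rho} \le U$ on $\partial''\left[\mathcal B_{\rho_2}^+(X_0)\setminus\mathcal B_{\rho}^+(X_0)\right]$, which is the claim. Strictly speaking one should double-check that $\partial''$ of the annular region $\mathcal B_{\rho_2}^+(X_0)\setminus\mathcal B_{\rho}^+(X_0)$ is exactly the outer spherical cap $\mathbf S_{\rho_2}^+(X_0)$ together with possibly the flat part; on the flat portion $\{t = 0\}$ the same estimate applies verbatim since the trace $U(\cdot,0)$ is continuous and positive away from $0$.

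There is no serious obstacle here: the lemma is essentially a soft consequence of the decay $(\rho/\rho_2)^{n-2s} \to 0$ of the Kelvin factor together with the uniform positivity and local boundedness of $U$ near $X_0$, both of which are guaranteed once we know $U$ is positive and smooth on $\overline{\mathbf R^{n+1}_+}\setminus\{0\}$. The only mild point of care is to fix the radii in the right order — first choose $r_0 < |X_0|$ and the lower bound $m$ on $\overline{\mathcal B_{r_0}^+(X_0)}$, then choose $\rho_1$ small enough that both $\rho^2/\rho_2 < r_0$ and $(\rho/\rho_2)^{n-2s}\max_{\overline{\mathcal B_{r_0}^+(X_0)}} U \le m$ hold for all $\rho \in (0,\rho_1)$ — and to note that the argument is uniform in $\xi$ over the compact cap $\mathbf S_{\rho_2}^+(X_0)$.
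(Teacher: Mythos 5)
Your approach is the same as the paper's: on the outer cap $\mathbf S_{\rho_2}^+(X_0)$ the Kelvin factor $(\rho/\rho_2)^{n-2s}$ shrinks to zero while $U$ is bounded above and away from zero on the compact set $\overline{\mathcal B_{\rho_2}^+(X_0)}$, which avoids the origin. The paper gets the two-sided bound by directly invoking the Harnack inequality (Proposition~\ref{p:Harnack}); you invoke the positivity and continuity from Theorem~\ref{t:regulatiry}(1), which itself rests on that Harnack inequality, so the substance is identical. Two small corrections, though. First, you have the boundary decomposition backwards: by the paper's convention $\partial'D$ is the flat part $\{t=0\}$ and $\partial''D=\partial D\setminus \partial'D$, so $\partial''\left[\mathcal B_{\rho_2}^+(X_0)\setminus\mathcal B_{\rho}^+(X_0)\right]$ consists of the \emph{two} spherical caps $\mathbf S_{\rho_2}^+(X_0)$ and $\mathbf S_{\rho}^+(X_0)$ and excludes the flat annulus. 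Your argument only treats the outer cap; on the inner cap $|\xi-X_0|=\rho$ one has $U_{X_0,\rho}(\xi)=U(\xi)$ identically (the Kelvin transform fixes that sphere), which must be stated since the inner cap is part of the set in question, and your $(\rho/\rho_2)^{n-2s}$ estimate does not apply there. Second, your chain $m\le\min_{\mathbf S_{\rho_2}^+(X_0)}U$ requires $\mathbf S_{\rho_2}^+(X_0)\subset\overline{\mathcal B_{r_0}^+(X_0)}$, i.e.\ $r_0\ge\rho_2$, which you never impose; simplest is to take $r_0=\rho_2$ (legal since $\rho_2<|X_0|$) or to set $M:=\sup_{\overline{\mathcal B_{\rho_2}^+(X_0)}}U$ and $m:=\inf_{\mathbf S_{\rho_2}^+(X_0)}U$ directly, as the paper effectively does.
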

\begin{proof}
Assume that $\xi\in \partial''\mathcal B_{\rho_2}^+(X_0)$. Then for every $\rho$ satisfying $0<\rho<\rho_1<\rho_2<|X_0|$, we have
\begin{equation*}
X_0+\frac{\rho^2(\xi-X_0)}{|\xi-X_0|}\in \mathcal B_{\rho_2}^+(X_0).
\end{equation*}
Therefore, by Proposition \ref{p:Harnack}, we can choose $\rho_1$ small enough such that
\begin{eqnarray*}
U_{X_0,\rho}(\xi)&=&\left(\frac{\rho}{|\xi-X_0|}\right)^{n-2s}U\left(X_0+\frac{\rho^2(\xi-X_0)}{|\xi-X_0|}\right)\notag\\
&\le&\left(\frac{\rho_1}{\rho_2}\right)^{n-2s}\sup_{\overline{\mathcal B}_{\rho_2}^+(X_0)}U\le \inf_{\partial''\mathcal B_{\rho_2}^+(X_0) }U\notag\\
&\le & U(\xi).
\end{eqnarray*}
Note that on $\partial''\mathcal B_{\rho}^+(X_0)$, $U_{X_0,\rho}=U$ holds obviously. This completes the proof of this lemma.
\end{proof}

\begin{lemma}\label{l:l2small}
If $\rho_2\in (0,|X_0|)$ is small enough, then there exists a constant $\rho_1 = \rho_1(\rho_2)$ depending on $\rho_2$ and satisfying  $0<\rho_1(\rho_2)<\rho_2$ such that, for all $\rho \in (0,\rho_1(\rho_2))$ and $\xi\in \mathcal B^+_{\rho_2}(X_0)\setminus \mathcal B^+_{\rho}(X_0)$, it holds that
\begin{equation*}
U_{X_0,\rho}(\xi)\le U(\xi).
\end{equation*}
\end{lemma}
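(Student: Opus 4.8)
\textbf{Proof proposal for Lemma \ref{l:l2small}.}
The plan is to run the method of moving spheres in the small-radius regime, using Lemma \ref{l:lemma5.2} to control the boundary values and the maximum principle (Lemma \ref{l:non-negative}) on the annular domain $\mathcal B^+_{\rho_2}(X_0)\setminus\mathcal B^+_{\rho}(X_0)$. Set $W_\rho := U - U_{X_0,\rho}$. By the Kelvin-transform equation \eqref{e:kte} together with the equation \eqref{e:mail-extention-c} for $U$, one has $\mathrm{div}(t^{1-2s}\nabla W_\rho)=0$ in $\mathcal B^+_{\rho_2}(X_0)\setminus\overline{\mathcal B}^+_{\rho}(X_0)$, and on the flat boundary piece $W_\rho$ solves a linearized Neumann-type problem whose zeroth-order coefficient, after using the elementary inequality $a^{\frac{n+2s}{n-2s}}-b^{\frac{n+2s}{n-2s}}=c(x)(a-b)$ with $c(x)\ge 0$ a bounded multiple of $\max(U,U_{X_0,\rho})^{\frac{4s}{n-2s}}$, is controlled. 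The new feature compared with \cite{JinLiXiongJEMS2014} is the Hardy term: on $\partial'$ the contribution is $\frac{\lambda}{|x|^{2s}}U - \left(\frac{\rho}{|x-x_0|}\right)^{4s}\frac{\lambda}{|x_{\rho,x_0}|^{2s}}U_{X_0,\rho}$, and here I would invoke Lemma \ref{l:kxx0l}: since $\rho<\rho_2<|X_0|$ and on the annulus $\rho<|x-x_0|<\rho_2<|x_0|$ we are in the regime \eqref{e:kxgl}, so the coefficient $\left(\frac{\rho}{|x-x_0|}\right)^{4s}|x_{\rho,x_0}|^{-2s}$ is dominated by $|x|^{-2s}$. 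Because $\lambda\ge 0$, this gives $\frac{\lambda}{|x|^{2s}}U - \left(\frac{\rho}{|x-x_0|}\right)^{4s}\frac{\lambda}{|x_{\rho,x_0}|^{2s}}U_{X_0,\rho}\ge \frac{\lambda}{|x|^{2s}}W_\rho$, so the Hardy part enters with a \emph{favorable} sign on the set where $W_\rho\ge 0$.

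Next I would turn this into a genuine comparison. The obstacle is that $W_\rho$ need not be nonnegative a priori, so one cannot directly apply the maximum principle; the standard device is to test the equation for $W_\rho$ against $W_\rho^- := \max(-W_\rho,0)$, extended by zero outside the annulus (this is legitimate because by Lemma \ref{l:lemma5.2} $W_\rho^-$ vanishes on $\partial''$ of the annulus, and $W_\rho^-$ vanishes near $0$ since $0\notin\overline{\mathcal B}^+_{\rho_2}(X_0)$ when $\rho_2<|X_0|$). Integrating by parts yields
\begin{equation*}
\int t^{1-2s}|\nabla W_\rho^-|^2 \le \int_{\{W_\rho<0\}\cap\partial'} \Big( c(x) + \frac{\lambda}{|x|^{2s}} \Big)(W_\rho^-)^2 \, dx,
\end{equation*}
where on $\{W_\rho<0\}$ one has $U<U_{X_0,\rho}$, hence $\max(U,U_{X_0,\rho})=U_{X_0,\rho}$ and $c(x)\le C\,U_{X_0,\rho}^{\frac{4s}{n-2s}}$. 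The point is that the measure-theoretic size of the annulus, the smallness of $\rho_2$, and the integrability $U_{X_0,\rho}^{\frac{4s}{n-2s}},\,|x|^{-2s}\in L^{n/2s}$ on the annulus (the Hardy weight is bounded there, away from $0$) combine, via the trace/Sobolev inequality of Lemma \ref{p:trace} (or Lemma \ref{l:ewss}) and Hölder, to make the right-hand side bounded by $\varepsilon(\rho_2)\int t^{1-2s}|\nabla W_\rho^-|^2$ with $\varepsilon(\rho_2)\to 0$ as $\rho_2\to 0$. Choosing $\rho_2$ small enough that $\varepsilon(\rho_2)<1$ forces $\nabla W_\rho^-\equiv 0$, hence $W_\rho^-\equiv 0$, i.e. $U_{X_0,\rho}\le U$ on the annulus.

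The role of $\rho_1 = \rho_1(\rho_2)$: it is exactly the threshold from Lemma \ref{l:lemma5.2} (possibly shrunk further) that guarantees $W_\rho\ge 0$ on $\partial''\big[\mathcal B^+_{\rho_2}(X_0)\setminus\mathcal B^+_{\rho}(X_0)\big]$; once $\rho_2$ is fixed small (for the $\varepsilon(\rho_2)<1$ requirement), Lemma \ref{l:lemma5.2} supplies such a $\rho_1(\rho_2)<\rho_2$, and for every $\rho\in(0,\rho_1(\rho_2))$ the argument above applies verbatim. I expect the main technical obstacle to be the precise justification that $W_\rho^-$ is an admissible test function (finite energy, correct boundary/trace behavior, and the vanishing near the origin and near $\partial''$), together with making the constant in the Hölder/Sobolev estimate genuinely depend only on $n$, $s$ and on $\rho_2$ in a way that tends to zero — this requires care because $U_{X_0,\rho}$ itself depends on $\rho$, so one should bound its $L^{n/2s}$ norm on the annulus uniformly in $\rho<\rho_1(\rho_2)$ using that the Kelvin transform maps the annulus into the fixed set $\mathcal B^+_{\rho_2}(X_0)$ on which $U$ is bounded by Proposition \ref{p:Harnack}. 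Everything else is the routine machinery of the moving-spheres method.
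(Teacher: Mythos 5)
Your proposal is correct and follows essentially the same narrow-domain moving-spheres argument as the paper: test $\mathrm{div}(t^{1-2s}\nabla(U_{X_0,\rho}-U))=0$ against $(U_{X_0,\rho}-U)^+$ (your $W_\rho^-$, the same function), use Lemma \ref{l:lemma5.2} to kill the $\partial''$ contribution, use Lemma \ref{l:kxx0l} to control the Kelvin-transformed Hardy coefficient, and close via H\"older plus the trace/Sobolev inequality with a constant that is made $<1$ by shrinking $\rho_2$. The only stylistic difference is that you first dominate the Kelvin coefficient by $|x|^{-2s}$ before applying H\"older (and the aside about a ``favorable sign on $\{W_\rho\ge 0\}$'' is a harmless non sequitur, since the relevant set is $\{W_\rho<0\}$), whereas the paper keeps the Kelvin factor and splits the Hardy term into a square term plus a sign-favorable remainder; both lead to the identical smallness conclusion.
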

\begin{proof}
From (\ref{e:mail-extention-c}) and (\ref{e:kte}), we have
\begin{equation}\label{e:divulambda-u}
{\rm div}\left(t^{1-2s}\nabla (U_{X_0,\rho}-U)\right)=0\quad\mbox{in  }\mathcal B^+_{\rho_2}(X_0)\setminus \overline{\mathcal B}^+_{\rho}(X_0)
\end{equation}
and
\begin{eqnarray}\label{e:b-ulambda-u}
-\lim_{t\to 0^+}\partial_t(U_{X_0,\rho}-U)
&=&\left[\left(\frac{\rho}{|x-x_0|}\right)^{4s}\frac{a}{|x_{\rho,x_0}|^{2s}}U_{X_0,\rho}(x_{\rho,x_0},0)-\frac{a}{|x|^{2s}}U(x,0)\right]\\
&&+\left[U_{X_0,\rho}^{\frac{n+2s}{n-2s}}(x_{\rho,x_0},0)-U^{\frac{n+2s}{n-2s}}(x,0)\right]\mbox{  on  }\partial'\left[\mathcal B^+_{\rho_2}(X_0)\setminus \overline{\mathcal B}^+_{\rho}(X_0)\right].\notag
\end{eqnarray}
Set $(U_{X_0,\rho}-U)^+:=\max(0,U_{X_0,\rho}-U)$, by Lemma \ref{l:lemma5.2},  it equals to $0$ on $\partial''\left[\mathcal B^+_{\rho_2}(X_0)\setminus \overline{\mathcal B}^+_{\rho}(X_0)\right]$. Let $(U_{X_0,\rho}-U)^+$ be a test function in the definition of weak solution for (\ref{e:extention_v_b}),
then from (\ref{e:divulambda-u}), (\ref{e:b-ulambda-u}) and Definition \ref{d:weak-solution}, we have
\begin{eqnarray*}
&&-\int_{\mathcal B^+_{\rho_2}(X_0)\setminus\mathcal B^+_{\rho}(X_0)}t^{1-2s}|\nabla(U_{X_0,\rho}-U)^+|^2\\
&&=\int_{\partial'\left[\mathcal B^+_{\rho_2}(X_0)\setminus \overline{\mathcal B}^+_{\rho}(X_0)\right]}
\left[\left(\frac{\rho}{|x-x_0|}\right)^{4s}\frac{\lambda}{|x_{\rho,x_0}|^{2s}}U_{X_0,\rho}(x_{\rho,x_0},0)-\frac{\lambda}{|x|^{2s}}U(x,0)\right]
(U_{X_0,\rho}-U)^+\notag\\
&&\quad +\int_{\partial'\left[\mathcal B^+_{\rho_2}(X_0)\setminus \overline{\mathcal B}^+_{\rho}(X_0)\right]}\left[U_{X_0,\rho}^{\frac{n+2s}{n-2s}}(x_{\rho,x_0},0)-U^{\frac{n+2s}{n-2s}}(x,0)\right](U_{X_0,\rho}-U)^+\notag\\
&&:=T_1+T_2.\notag
\end{eqnarray*}
In the following, we estimate $T_1$ and $T_2$ respectively.

(1) $T_1$: A direct computation yields that
\begin{eqnarray*}
T_1&=&\int_{\partial'\left[\mathcal B^+_{\rho_2}(X_0)\setminus \overline{\mathcal B}^+_{\rho}(X_0)\right]}
\left[\left(\frac{\rho}{|x-x_0|}\right)^{4s}\frac{\lambda}{|x_{\rho,x_0}|^{2s}}\left(U_{X_0,\rho}(x_{\rho,x_0},0)-U(x,0)\right)\right](U_{X_0,\rho}-U)^+\notag\\
&&+\int_{\partial'\left[\mathcal B^+_{\rho_2}(X_0)\setminus \overline{\mathcal B}^+_{\rho}(X_0)\right]}\left[\left(\frac{\rho}{|x-x_0|}\right)^{4s}\frac{\lambda}{|x_{\rho,x_0}|^{2s}}-\frac{\lambda}{|x|^{2s}}\right]U(x,0)
(U_{X_0,\rho}-U)^+\notag\\
&=&\int_{\partial'\left[\mathcal B^+_{\rho_2}(X_0)\setminus \overline{\mathcal B}^+_{\rho}(X_0)\right]}\left(\frac{\rho}{|x-x_0|}\right)^{4s}\frac{\lambda}{|x_{\rho,x_0}|^{2s}}\left((U_{X_0,\rho}-U)^+\right)^2\notag\\
&&+\int_{\partial'\left[\mathcal B^+_{\rho_2}(X_0)\setminus \overline{\mathcal B}^+_{\rho}(X_0)\right]}\left[\left(\frac{\rho}{|x-x_0|}\right)^{4s}\frac{\lambda}{|x_{\rho,x_0}|^{2s}}-\frac{\lambda}{|x|^{2s}}\right]U(x,0)
(U_{X_0,\rho}-U)^+.
\end{eqnarray*}
Since $\rho_2\le |x_0|=|X_0|$, then,
by Lemma \ref{l:kxx0l}, we have
\begin{equation*}
\left(\frac{\rho}{|x-x_0|}\right)^{4s}\frac{\lambda}{|x_{\rho,x_0}|^{2s}}-\frac{\lambda}{|x|^{2s}}\le 0,\quad \mbox{in } \partial'\left[\mathcal B^+_{\rho_2}(X_0)\setminus \overline{\mathcal B}^+_{\rho}(X_0)\right].
\end{equation*}
Therefore,
\begin{eqnarray*}
T_1\le \int_{\partial'\left[\mathcal B^+_{\rho_2}(X_0)\setminus \overline{\mathcal B}^+_{\rho}(X_0)\right]}\left(\frac{\rho}{|x-x_0|}\right)^{4s}\frac{\lambda}{|x_{\rho,x_0}|^{2s}}\left((U_{X_0,\rho}-U)^+\right)^2.
\end{eqnarray*}
By H\"older inequality, one has
\begin{eqnarray*}
&&T_1\leq \\
&&\left(\int_{\partial'\left[\mathcal B^+_{\rho_2}(X_0)\setminus \overline{\mathcal B}^+_{\rho}(X_0)\right]}\frac{\rho^{2n}}{|x-x_0|^{2n}}\frac{\lambda^{\frac{n}{2s}}}{|x_{\rho,x_0}|^{n}}\right)^{\frac{2s}{n}}
\left(\int_{\partial'\left[\mathcal B^+_{\rho_2}(X_0)\setminus \overline{\mathcal B}^+_{\rho}(X_0)\right]}\left((U_{X_0,\rho}-U)^+\right)^{\frac{2n}{n-2s}}\right)^{\frac{n-2s}{n}}.
\end{eqnarray*}

(2) $T_2$:
By the mean value theorem and H\"older inequality, we have
\begin{eqnarray*}
T_2&\le&C\int_{\partial'\left[\mathcal B^+_{\rho_2}(X_0)\setminus \overline{\mathcal B}^+_{\rho}(X_0)\right]} U_{X_0,\rho}^{\frac{4s}{n-2s}}\left((U_{X_0,\rho}-U)^+\right)^2\notag\\
&\le& C\left(\int_{\partial'\left[\mathcal B^+_{\rho_2}(X_0)\setminus \overline{\mathcal B}^+_{\rho}(X_0)\right]} U_{X_0,\rho}^{\frac{2n}{n-2s}}\right)^{\frac{2s}{n}}\left(\int_{\partial'\left[\mathcal B^+_{\rho_2}(X_0)\setminus \overline{\mathcal B}^+_{\rho}(X_0)\right]}\left((U_{X_0,\rho}-U)^+\right)^{\frac{2n}{n-2s}}\right)^{\frac{n-2s}{n}}\notag\\
&\le& C \left(\int_{\mathcal{B}^+_{\rho_2}(X_0)} U^{\frac{2n}{n-2s}}\right)^{\frac{2s}{n}}\left(\int_{\partial'\left[\mathcal B^+_{\rho_2}(X_0)\setminus \overline{\mathcal B}^+_{\rho}(X_0)\right]}\left((U_{X_0,\rho}-U)^+\right)^{\frac{2n}{n-2s}}\right)^{\frac{n-2s}{n}},
\end{eqnarray*}
where $C > 0$ is a constant depending only on $n$ and $s$.

By Proposition \ref{p:trace},
\begin{eqnarray*}
T_1+T_2&\le&C\left[\left(\int_{\partial'\left[\mathcal B^+_{\rho_2}(X_0)\setminus \overline{\mathcal B}^+_{\rho}(X_0)\right]}\frac{\rho^{2n}}{|x-x_0|^{2n}}\frac{\lambda^{\frac{n}{2s}}}{|x_{\rho,x_0}|^{n}}\right)^{\frac{2s}{n}}+\left(\int_{\mathcal{B}^+_{\rho_2}(X_0)} U^{\frac{2n}{n-2s}}\right)^{\frac{2s}{n}}\right]\notag\\
&&\,\,\,\,\times\int_{\mathcal B^+_{\rho_2}(X_0)\setminus \overline{\mathcal B}^+_{\rho}(X_0)}t^{1-2s}|\nabla(U_{X_0,\rho}-U)^+|^2.
\end{eqnarray*}
Thus, if we choose $\rho_2$ small enough such that
\begin{equation*}
C\left[\left(\int_{\partial'\left[\mathcal B^+_{\rho_2}(X_0)\setminus \overline{\mathcal B}^+_{\rho}(X_0)\right]}\frac{\rho^{2n}}{|x-x_0|^{2n}}\frac{\lambda^{\frac{n}{2s}}}{|x_{\rho,x_0}|^{n}}\right)^{\frac{2s}{n}}+\left(\int_{\mathcal{B}^+_{\rho_2}(X_0)} U^{\frac{2n}{n-2s}}\right)^{\frac{2s}{n}}\right]<\frac{1}{2},
\end{equation*}
we get
\begin{equation*}
\nabla(U_{X_0,\rho}-U)^+=0 \mbox{ in } \mathcal B^+_{\rho_2}(X_0)\setminus\mathcal B^+_{\rho}(X_0).
\end{equation*}
Since $(U_{X_0,\rho}-U)^+=0$ on $\partial''(\mathcal B^+_{\rho_2}(X_0)\setminus\mathcal B^+_{\rho}(X_0))$, we have
\begin{equation*}
(U_{X_0,\rho}-U)^+=0 \mbox{ in } \mathcal B^+_{\rho_2}(X_0)\setminus\mathcal B^+_{\rho}(X_0).
\end{equation*}
Therefore, for $0<\rho<\rho_1(\rho_2)$, one has
\begin{equation*}
U_{X_0,\rho}\le U\mbox{ in }\mathcal B^+_{\rho_2}(X_0)\setminus\mathcal B^+_{\rho}(X_0).
\end{equation*}
This completes the proof.
\end{proof}

\begin{lemma}\label{l:xigl2}
Let $\rho_1$ be the number given in the proof of Lemma \ref{l:l2small}, then there exists $\rho_0\in (0,\rho_1)$ such that, for all $\rho\in (0,\rho_0)$ and $\xi\in \mathbf R^{n+1}_+\setminus(\bar{\mathcal B}^+_{\rho_2}(X_0)\cup \{0\})$, it holds that
\begin{equation*}
U_{X_0,\rho}(\xi)\le U(\xi).
\end{equation*}
\end{lemma}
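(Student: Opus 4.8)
The plan is to prove that $w:=U-U_{X_0,\rho}\ge 0$ on the unbounded set $D:=\mathbf R^{n+1}_+\setminus\big(\overline{\mathcal B}_{\rho_2}^+(X_0)\cup\{0\}\big)$ for all sufficiently small $\rho$, by an energy estimate in the spirit of the proof of Lemma~\ref{l:l2small}. Everything hinges on the \emph{uniform smallness} of $U_{X_0,\rho}$ on $D$: if $\xi\in D$ then $|\xi-X_0|\ge\rho_2$, so the reflected point $X_0+\dfrac{\rho^2(\xi-X_0)}{|\xi-X_0|^2}$ lies in $\overline{\mathcal B}_{\rho}^+(X_0)\subset\overline{\mathcal B}_{\rho_2}^+(X_0)$, a fixed compact set not containing $0$ (since $\rho_2<|X_0|$) on which $U$ is bounded by Theorem~\ref{t:regulatiry}(1); hence, with $M_\star:=\sup_{\overline{\mathcal B}_{\rho_2}^+(X_0)}U$,
\begin{equation*}
0\le U_{X_0,\rho}(\xi)\le\Big(\frac{\rho}{\rho_2}\Big)^{n-2s}M_\star\quad\text{on }D,\qquad U_{X_0,\rho}(\xi)\le 2^{n-2s}M_\star\,\rho^{n-2s}|\xi|^{-(n-2s)}\ \text{for }|\xi|\ge 2|X_0|,
\end{equation*}
so $\|U_{X_0,\rho}\|_{L^\infty(D)}\to 0$ as $\rho\to 0$ and $U_{X_0,\rho}\to 0$ at infinity.

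First I would check that $w^-:=(U_{X_0,\rho}-U)^+$ vanishes on $D\cap\{|\xi|\le\hat R\}$ for each fixed $\hat R$, provided $\rho$ is small. Indeed, Lemma~\ref{p:pn0} applies near the origin (there $U$ is positive, $t^{1-2s}$-harmonic, with nonnegative Neumann data because $\lambda\ge 0$) and gives $\liminf_{|\xi|\to 0}U(\xi)>0$; with the positivity and continuity of $U$ away from $0$ this yields $\inf_{D\cap\{|\xi|\le\hat R\}}U=:\delta(\hat R)>0$, which exceeds $(\rho/\rho_2)^{n-2s}M_\star$ for $\rho$ small. Combined with Lemma~\ref{l:lemma5.2} ($w^-=0$ on $\partial''\mathcal B_{\rho_2}^+(X_0)$) and the decay of $U_{X_0,\rho}$ at infinity, this shows $w^-$ is supported in $D$, bounded away from $0$, and decaying at infinity, hence is an admissible test function for the weak formulation of (\ref{e:extention_v_b}) on $D$ — using that $U\in\dot H^1(\mathbf R^{n+1}_+,t^{1-2s})$, or a truncation at infinity whose error vanishes with $R$ because $U_{X_0,\rho}(\xi)\le C|\xi|^{-(n-2s)}$.

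The core step is the energy estimate. Testing the weak formulation for $w$ on $D$ against $w^-$, using the Neumann conditions for $U$ and $U_{X_0,\rho}$ (as in (\ref{e:divulambda-u})--(\ref{e:b-ulambda-u})), the pointwise inequality of Lemma~\ref{l:kxx0l} — which, by the computation in its proof, holds throughout $\partial'D$ because $\rho<\rho_2\le|x-x_0|$ and $\rho<|x_0|$ (the further restriction $|x-x_0|<|x_0|$ stated there being unnecessary) — and the mean value theorem on the nonlinear term, one arrives at
\begin{equation*}
\int_D t^{1-2s}|\nabla w^-|^2\le\int_{\partial'D}\Big(\frac{\lambda}{|x|^{2s}}+\frac{n+2s}{n-2s}\,U_{X_0,\rho}^{\frac{4s}{n-2s}}\Big)(w^-)^2,
\end{equation*}
the integrand being supported in $\{U<U_{X_0,\rho}\}$. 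Now the Hardy term is $\le\lambda\int_{\mathbf R^n}|x|^{-2s}(w^-)^2\le\tfrac{\lambda}{\Lambda_{n,s}}\int_{\mathbf R^{n+1}_+}t^{1-2s}|\nabla w^-|^2$ by the Hardy inequality (\ref{e:hardy-ineq}) and the trace inequality (\ref{e:trace-embedding}), while the nonlinear term is $\le C\rho^{4s}\int_{\mathbf R^{n+1}_+}t^{1-2s}|\nabla w^-|^2$ by Hölder's inequality, the trace Sobolev inequality of Lemma~\ref{p:trace}, and the bound $\|U_{X_0,\rho}(\cdot,0)\|_{L^{2n/(n-2s)}(\partial'D)}\le C\rho^{n-2s}$ (from the uniform smallness). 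Hence $\big(1-\tfrac{\lambda}{\Lambda_{n,s}}-C\rho^{4s}\big)\int_D t^{1-2s}|\nabla w^-|^2\le 0$, so for $0\le\lambda<\Lambda_{n,s}$ and $\rho$ small $\nabla w^-\equiv 0$, and since $w^-$ vanishes near $0$, $w^-\equiv 0$; that is, $U_{X_0,\rho}\le U$ on $D$. Choosing $\rho_0\in(0,\rho_1)$ below all the thresholds above proves the lemma (and, with Lemma~\ref{l:l2small}, Proposition~\ref{p:beginning}).

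The hard part is the borderline character of the Hardy potential at infinity: the Hardy term in the estimate above cannot be absorbed into the Dirichlet energy by Hölder's inequality, because $|x|^{-2s}\notin L^{n/2s}$ near infinity, which is exactly why one must use the \emph{sharp} Hardy inequality (\ref{e:hardy-ineq}) and work under $\lambda<\Lambda_{n,s}$ (when $\lambda\ge\Lambda_{n,s}$ there is no nontrivial nonnegative solution, cf.\ Theorem~\ref{t:lglne}). A secondary technical point is the admissibility of $w^-$ as a test function — i.e.\ that it has finite weighted Dirichlet energy — which is where one uses the decay of $U$ (equivalently $U\in\dot H^1(\mathbf R^{n+1}_+,t^{1-2s})$), handled by a routine cut-off at infinity since $U_{X_0,\rho}$ decays like $|\xi|^{-(n-2s)}$.
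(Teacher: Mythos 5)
Your approach is genuinely different from the paper's, and it is worth comparing. The paper's proof of Lemma~\ref{l:xigl2} is a pure barrier/comparison argument: it introduces the explicit function $\varphi(\xi)=(\rho_2/|\xi-X_0|)^{n-2s}\inf_{\partial''\mathcal B_{\rho_2}(X_0)}U$, which is $t^{1-2s}$-harmonic with zero Neumann data, verifies $U\ge\varphi$ near $0$ via Proposition~\ref{p:pn0} and on the outer annulus by the maximum principle (Lemma~\ref{l:non-negative}), and then chooses $\rho_0$ so small that $U_{X_0,\rho}\le\varphi$ pointwise. No energy estimate, no Hardy inequality, and --- crucially --- no restriction on $\lambda$ beyond $\lambda\ge 0$. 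Your proposal replaces this by an integral estimate on the unbounded exterior $D$, tested against $(U_{X_0,\rho}-U)^+$, with the potential term absorbed via the sharp Hardy/trace inequality and the nonlinear term via Hölder and the uniform smallness $\|U_{X_0,\rho}\|_{L^\infty(D)}=O(\rho^{n-2s})$. The local ingredients you use (Lemma~\ref{p:pn0} near $0$, the observation that the restriction $|x-x_0|<|x_0|$ in Lemma~\ref{l:kxx0l} is not actually needed for (\ref{e:kxgl}), and the decay of $U_{X_0,\rho}$ at infinity) are all sound.

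The substantive gap is the $\lambda$-range. Your energy estimate ends with $\bigl(1-\lambda/\Lambda_{n,s}-C\rho^{4s}\bigr)\int_D t^{1-2s}|\nabla(U_{X_0,\rho}-U)^+|^2\le 0$, which forces the conclusion only when $\lambda<\Lambda_{n,s}$; at $\lambda=\Lambda_{n,s}$ the coefficient is already negative for every $\rho>0$, so the argument gives nothing. Theorem~\ref{t:regulatiry} (and hence this lemma, which feeds it) is stated for all $\lambda\ge 0$. You patch this by appealing to Theorem~\ref{t:lglne} (nonexistence for $\lambda\ge\Lambda_{n,s}$), but that theorem is proved in Section~\ref{s:none}, \emph{after} the moving-spheres argument of Section~\ref{s:symmetry}; adopting your route would require either reordering the paper or explicitly checking that Theorem~\ref{t:lglne} does not quietly rely on Section~\ref{s:symmetry} (it does not, but this must be stated). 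The paper's barrier argument avoids this logical dependency entirely, and is also lighter on technical care: it works with maximum principles on bounded domains and a limiting $\varepsilon\to 0$, whereas your estimate needs the admissibility of the test function on an unbounded domain and the Hardy inequality at the sharp constant, precisely because $|x|^{-2s}\notin L^{n/(2s)}$ at infinity, as you correctly point out. In short: your proof is a valid alternative for $0\le\lambda<\Lambda_{n,s}$, trades the barrier for a global energy estimate, but loses the endpoint $\lambda\ge\Lambda_{n,s}$ without an extra (and forward-referencing) ingredient.
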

\begin{proof}
Set
\begin{equation*}
\varphi(\xi)=\left(\frac{\rho_2}{|\xi-X_0|}\right)^{n-2s}\inf_{\partial''\mathcal B_{\rho_2}(X_0)}U,
\end{equation*}
where $\rho_2$ is given by Lemma \ref{l:l2small}. A direct calculation yields that
\begin{equation*}
\left\{\begin{array}{ll}
         {\rm div}(t^{1-2s}\nabla \varphi)=0, & \mbox{ in }\mathbf R^{n+1}_+\setminus\mathcal B^+_{\rho_2}(X_0), \\
         \displaystyle-\lim_{t\to 0^+}\partial_t\varphi(x,t)=0, & \mbox{ on }\mathbf R^n\setminus \overline{B}_{\rho_2}(x_0).
       \end{array}
\right.
\end{equation*}
From Proposition \ref{p:pn0}, for $\rho_2>0$ small enough, there exists a positive number $\varepsilon_0$ such that, for all $\varepsilon\in (0,\varepsilon_0)$,
\begin{equation*}
U(\xi)\ge \varphi(\xi),\quad \mbox{ in }\overline{\mathcal B}^+_{\varepsilon}(0)\setminus \{0\}.
\end{equation*}
Particularly,
\begin{equation*}
U(\xi)\ge \varphi(\xi),\quad \mbox{on }\partial''\mathcal B^+_{\varepsilon}(0).
\end{equation*}
Note that $\varphi(\xi)\le U(\xi)$ on $\partial''\mathcal B^+_{\rho_2}(X_0)$, then by Lemma \ref{l:non-negative}, we have
\begin{equation*}
U(\xi)\ge \varphi(\xi), \quad \mbox{in }\mathbf R^{n+1}_+\setminus[\mathcal B^+_{\rho_2}(X_0)\cup\mathcal B^+_{\varepsilon}(0)].
\end{equation*}
Define
\begin{equation*}
\rho_0=\min\left\{\rho_1,\rho_2\left(\frac{\inf_{\partial''\mathcal B_{\rho_2}(X_0)}U}{\sup_{\mathcal B_{\rho_2}(X_0)}U}\right)^{\frac{1}{n-2s}}\right\}.
\end{equation*}
Then in $\mathbf R^{n+1}_+\setminus[\mathcal B^+_{\rho_2}(X_0)\cup\mathcal B^+_{\varepsilon}(0)]$, one has
\begin{eqnarray*}
U_{X_0,\rho}(\xi)&=&\left(\frac{\rho}{|\xi-X_0|}\right)^{n-2s}U\left(X_0+\frac{\rho^2(\xi-X_0)}{|\xi-X_0|^2}\right)\notag\\
&\le& \left(\frac{\rho_0}{|\xi-X_0|}\right)^{n-2s}\sup_{\mathcal B_{\rho_2}(X_0)}U\notag\\
&\le&\left(\frac{\rho_2}{|\xi-X_0|}\right)^{n-2s}\inf_{\partial''\mathcal B_{\rho_2}(X_0)}U\le U(\xi).
\end{eqnarray*}
Since $\varepsilon\in (0,\varepsilon_0)$ is arbitrarily, it then holds that
\begin{equation*}
U_{X_0,\rho}(\xi)\le U(\xi),\quad \mbox{ in }\mathbf R^{n+1}_+\setminus(\overline{\mathcal B}^+_{\rho_2}(X_0)\cup \{0\}).
\end{equation*}
This completes the proof.
\end{proof}
\begin{proof}[Proof of Proposition \ref{p:beginning}]
Combining Lemma \ref{l:l2small} and Lemma \ref{l:xigl2}, we get the desired result.
\end{proof}

Define
\begin{equation*}
\bar\rho(X_0)=\sup\{0<\mu\le|X_0|: U_{X_0,\rho}\le U\,\,\mbox{ in }\mathbf R^{n+1}_+\setminus \left(\mathcal B^+_{\rho}(X_0)\cup\{0\}\right),\,\forall\, 0<\rho<\mu\}.
\end{equation*}
By Proposition \ref{p:beginning}, it holds that $\bar\rho(X_0)>0$.
\begin{prop}\label{p:n+1symmetry}
For all $X_0\in\mathbf R^{n+1}_+\setminus \{0\}$, we have
\begin{equation*}
\bar\rho(X_0)=|X_0|.
\end{equation*}
\end{prop}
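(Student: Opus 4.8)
The plan is to argue by contradiction. Suppose $\bar\rho:=\bar\rho(X_0)<|X_0|$. Letting $\rho\uparrow\bar\rho$ in the defining inequality of $\bar\rho$ and using the continuity of $\rho\mapsto U_{X_0,\rho}$, one first obtains $U_{X_0,\bar\rho}\le U$ on the whole exterior region $\Sigma_{\bar\rho}:=\mathbf R^{n+1}_+\setminus\big(\overline{\mathcal B}^+_{\bar\rho}(X_0)\cup\{0\}\big)$ and its closure (with equality on $\partial''\mathcal B^+_{\bar\rho}(X_0)$). The goal is then to show that the inequality $U_{X_0,\rho}\le U$ still holds for every $\rho$ in some interval $(\bar\rho,\bar\rho+\varepsilon)$; this contradicts the maximality of $\bar\rho$ and forces $\bar\rho(X_0)=|X_0|$.

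\emph{Step 1: strict inequality.} Put $W:=U-U_{X_0,\bar\rho}\ge 0$. By \eqref{e:mail-extention-c} and \eqref{e:kte}, $\mathrm{div}(t^{1-2s}\nabla W)=0$ in $\Sigma_{\bar\rho}$, and on $\partial'\Sigma_{\bar\rho}$ one computes $-\lim_{t\to0^+}t^{1-2s}\partial_tW=\tilde c(x)W+g(x)$, where $\tilde c\ge 0$ is locally bounded on $\partial'\Sigma_{\bar\rho}$ (coming from $\lambda|x|^{-2s}$ and from the mean–value linearization of $u\mapsto u^{(n+2s)/(n-2s)}$, both nonnegative since $\lambda,U,U_{X_0,\bar\rho}\ge 0$) and the residual term $g(x)=\lambda\big(|x|^{-2s}-(\bar\rho/|x-x_0|)^{4s}|x_{\bar\rho,x_0}|^{-2s}\big)U_{X_0,\bar\rho}(x,0)$ is \emph{nonnegative} on $\partial'\Sigma_{\bar\rho}$ by Lemma~\ref{l:kxx0l}, \eqref{e:kxgl} — this is exactly where $\bar\rho<|X_0|$ is used. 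Hence $W\ge 0$ is a supersolution of the associated homogeneous degenerate problem, and the strong maximum principle (the Harnack-type estimates behind Proposition~\ref{p:Harnack} together with the weak minimum principle, Lemma~\ref{l:non-negative}) yields the alternative: either $W\equiv 0$ in the connected set $\Sigma_{\bar\rho}$, or $W>0$ throughout the interior of $\Sigma_{\bar\rho}$ and on $\partial'\Sigma_{\bar\rho}$. The first case is excluded, because $W\equiv 0$ would force $g\equiv 0$, contradicting the strict inequality in Lemma~\ref{l:kxx0l} for $x\ne 0$. So $W>0$ strictly.

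\emph{Step 2: enlarging the radius.} For $\rho$ slightly above $\bar\rho$, decompose $\Sigma_\rho$ into a thin shell $\mathcal B^+_{\bar\rho+\delta}(X_0)\setminus\overline{\mathcal B}^+_\rho(X_0)$ adjacent to the new sphere (with $\delta>\varepsilon$ small and fixed) and the complementary far region. On the far region one shows $U-U_{X_0,\rho}>0$ for all $\rho$ close to $\bar\rho$: on compact middle annuli this follows from the strict positivity of $W$ in Step~1 and the continuity of $(\xi,\rho)\mapsto U_{X_0,\rho}(\xi)$; near the origin it follows, exactly as in Lemma~\ref{l:xigl2}, from the barrier $\varphi$ and Proposition~\ref{p:pn0}; near infinity it follows by Kelvin-inverting $U$ about the origin and invoking Proposition~\ref{p:pn0} again (which also underlies Lemma~\ref{l:xigl2}), turning the neighborhood of infinity into a neighborhood of $0$ where the lower bound is recovered. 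On the thin shell we re-run the smallness-of-measure energy estimate of Lemma~\ref{l:l2small} — the residual Hardy term still having the good sign by Lemma~\ref{l:kxx0l} — so that, $\delta$ being small, the relevant $L^{n/2s}$-coefficients are small; since $(U_{X_0,\rho}-U)^+$ vanishes on the inner face $\{|\xi-X_0|=\rho\}$ (where $U_{X_0,\rho}=U$) and on the outer face $\{|\xi-X_0|=\bar\rho+\delta\}$ (by the far-region estimate just obtained), it follows that $(U_{X_0,\rho}-U)^+\equiv 0$ on the shell. Combining the two pieces gives $U_{X_0,\rho}\le U$ on all of $\Sigma_\rho$ for $\rho\in(\bar\rho,\bar\rho+\varepsilon)$, the desired contradiction.

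\emph{Main obstacle.} The crux, absent in the conformally invariant case, is the extra non-conformal Hardy factor $(\rho/|x-x_0|)^{4s}|x_{\rho,x_0}|^{-2s}$ in \eqref{e:kte}: the comparison with $|x|^{-2s}$ furnished by Lemma~\ref{l:kxx0l} has the favorable sign only while $\rho<|X_0|$, and this sign must be kept under control both in the maximum principle of Step~1 and — through the varying flat domain $\partial'[\mathcal B^+_{\rho_2}\setminus\mathcal B^+_\rho]$ — in the energy estimate of Step~2. A secondary technical point is the decay control at infinity used in the far region, which is already implicit in the proof of Lemma~\ref{l:xigl2}.
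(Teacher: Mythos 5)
Your proposal is the Kelvin-dual of the paper's argument: you work in the exterior domain $\Sigma_{\bar\rho}$ and use the exterior-side inequality \eqref{e:kxgl} from Lemma~\ref{l:kxx0l}, whereas the paper passes to the \emph{interior} of $\mathcal B^+_{\bar\rho}(X_0)$ via the Kelvin-transformed relation $U_{X_0,\bar\rho}\ge U$ in $\mathcal B^+_{\bar\rho}(X_0)\setminus\{X_0,0_{\bar\rho,X_0}\}$ and uses the interior-side inequality \eqref{e:kxll}. Both pictures deploy the same tools (Lemma~\ref{l:kxx0l} for the sign of the extra Hardy term, Lemma~\ref{p:harnack_1} for the strict dichotomy, Proposition~\ref{p:pn0} for the singular points, and the narrow-domain energy estimate for the thin shell near the critical sphere). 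The advantage of the interior picture, which you do not reproduce, is that both singularity points of the difference are \emph{finite}: the image of $0$ becomes $0_{\bar\rho,X_0}$ and the point at infinity becomes $X_0$, and Proposition~\ref{p:pn0} applies directly to $U_{X_0,\bar\rho}-U$ at each of them to produce the uniform $\liminf$ that survives when $\rho$ is nudged past $\bar\rho$.

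The genuine gap in your argument is the near-infinity part of Step~2. In the exterior picture, both $U$ and $U_{X_0,\rho}$ decay like $|\xi|^{-(n-2s)}$ at infinity, so the strict inequality of Step~1 does \emph{not} give a uniform positive gap there; you need a lower bound for $|\xi|^{n-2s}(U-U_{X_0,\rho})$. Your proposed remedy — Kelvin-invert about the origin and invoke Proposition~\ref{p:pn0} — is under-specified: after the $0$-inversion, $U$ becomes a solution of the same equation but $U_{X_0,\rho}$ becomes a conjugation of $U$ by two inversions about \emph{different} centers (a Möbius map, not a sphere inversion), so one would have to redo the computation of the transformed boundary flux for $\widetilde{U_{X_0,\bar\rho}}$ and re-verify that the transformed difference is a nonnegative supersolution near $0$ before Proposition~\ref{p:pn0} can be applied. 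Also, Lemma~\ref{l:xigl2} does not actually use a $0$-inversion for its far-field control; it uses the barrier $\varphi$, and that barrier's choice depends on a small $\rho_2$ and does not extend as $\rho$ approaches $\bar\rho$. The paper's interior formulation sidesteps all of this: the singularity at $X_0$ is where $U_{X_0,\bar\rho}-U$ is already a nonnegative supersolution of the homogeneous problem (by \eqref{e:xkx0bl} and $T_4\ge0$), and Proposition~\ref{p:pn0} gives the uniform gap immediately. A secondary remark: your exclusion of $W\equiv 0$ via $g\not\equiv 0$ genuinely requires $\lambda>0$; for $\lambda=0$ both $g\equiv 0$ and nontrivial solutions of $U_{X_0,\bar\rho}\equiv U$ exist, so the dichotomy degenerates — the paper's proof shares this limitation, tacitly assuming $U_{X_0,\bar\rho}-U\not\equiv0$.
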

\begin{proof}
We argue by contradiction. Suppose $\bar\rho(X_0)<|X_0|$. For simplicity, we set $\bar\rho:=\bar\rho(X_0)$. By the definition of $\bar\rho(X_0)$, it holds that
\begin{equation*}
U_{X_0,\bar\rho}\le U, \quad \mbox{ in }\mathbf R^{n+1}_+\setminus(\bar{\mathcal B}^+_{\bar\rho}(X_0)\cup \{0\}).
\end{equation*}
By Kelvin transformation, this is equivalent to
\begin{equation*}
U_{X_0,\bar\rho}\ge U,\quad \mbox{ in }\mathcal B^+_{\bar\rho}(X_0)\setminus\{X_0,0_{\bar\rho, X_0}\}.
\end{equation*}
Here $0_{\bar\rho, X_0}$ is the Kelvin transformation in $\mathbf{R}^{n + 1}_+$ of $0$. Let $\delta\in \left(0,\frac{1}{2}\min\{\bar\rho,\bar\rho-\frac{\bar\rho^2}{|X_0|}\}\right)$,
then, by (\ref{e:mail-extention-c}) and (\ref{e:kte}), $U_{X_0,\bar\rho}-U$ satisfies
\begin{equation*}
\begin{array}{ll}
         {\rm div}(t^{1-2s}(U_{X_0,\bar\rho}-U))=0\mbox{ in } \mathcal B^+_{\bar\rho}(X_0)\setminus [\mathcal B^+_{\delta}(X_0)\cup \mathcal B^+_{\delta}(0_{\bar\rho, X_0})]
       \end{array}
\end{equation*}
and
\begin{eqnarray*}
&&\displaystyle-\lim_{t\to 0^+}t^{1-2s}\partial_t (U_{X_0,\rho}-U)\notag\\
&&=\left(\frac{\rho}{|x-x_0|}\right)^{4s}\frac{\lambda}{|x_{\rho,x_0}|^{2s}}U_{X_0,\rho}(x_{\rho,x_0},0)
+U_{X_0,\rho}^{\frac{n+2s}{n-2s}}(x_{\rho,x_0},0)\notag\\
&&\quad -\left(\frac{\lambda}{|x|^{2s}}U(x,0)+U^{\frac{n+2s}{n-2s}}(x,0)\right)\notag\\
&&=\left[\left(\frac{\rho}{|x-x_0|}\right)^{4s}\frac{\lambda}{|x_{\rho,x_0}|^{2s}}U_{X_0,\rho}(x_{\rho,x_0},0)-\frac{\lambda}{|x|^{2s}}U(x,0)\right]\notag\\
&&\quad +\left[U_{X_0,\rho}^{\frac{n+2s}{n-2s}}(x_{\rho,x_0},0)-U^{\frac{n+2s}{n-2s}}(x,0)\right]\notag\\
&&:=T_3+T_4,\quad \mbox{ on }\partial'[\mathcal B^+_{\bar\rho}(X_0)\setminus (\mathcal B^+_{\delta}(X_0)\cup \mathcal B^+_{\delta}(0_{\bar\rho, X_0}))].
\end{eqnarray*}
Here $x_{\rho,x_0}$ is given by (\ref{e:xxl}).
Note that $T_4\ge 0$ on $\partial'[\mathcal B^+_{\bar\rho}(X_0)\setminus (\mathcal B^+_{\delta}(X_0)\cup \mathcal B^+_{\delta}(0_{\bar\rho, X_0}))]$.
So we only have to estimate $T_3$.

By Lemma \ref{l:kxx0l}, for $x\in B_{\bar\rho}(x_0)\setminus (B_{\delta}(x_0)\cup B_{\delta}(0_{\bar\rho, x_0}))$, it holds that
\begin{equation}\label{e:xkx0bl}
\left(\frac{\bar\rho}{|x-x_0|}\right)^{4s}\frac{\lambda}{|x_{\bar\rho,x_0}|^{2s}}\ge \frac{\lambda}{|x|^{2s}}.
\end{equation}
Then we have, in $\partial'[\mathcal B^+_{\bar\rho}(X_0)\setminus (\mathcal B^+_{\delta}(X_0)\cup \mathcal B^+_{\delta}(0_{\bar\rho, X_0}))]$,
\begin{eqnarray*}
T_3&\ge& \frac{\lambda}{|x|^{2s}}(U_{X_0,\rho}(x_{\rho,x_0},0)-U(x,0))\ge 0.
\end{eqnarray*}
Therefore,
\begin{equation*}
\displaystyle-\lim_{t\to 0^+}t^{1-2s}\partial_t (U_{X_0,\rho}-U)\ge 0\mbox{ in }\partial'[\mathcal B^+_{\bar\rho}(X_0)\setminus (\mathcal B^+_{\delta}(X_0)\cup \mathcal B^+_{\delta}(0_{\bar\rho, X_0}))].
\end{equation*}
If $U_{X_0,\rho}-U$ is not identically $0$, then by Lemma \ref{p:harnack_1}, we have
\begin{equation}\label{e:gtbl}
U_{X_0,\rho}>U,\mbox{ in } \overline{\mathcal B}^+_{\bar\rho}(X_0)\setminus [\mathcal B^+_{\delta}(X_0)\cup \mathcal B^+_{\delta}(0_{\bar\rho, X_0})\cup \partial''\mathcal B^+_{\bar\rho}(X_0)].
\end{equation}
From Proposition \ref{p:pn0}, it holds that
\begin{equation*}
\liminf_{\xi\to X_0}(U_{X_0,\bar\rho}(\xi)-U(\xi))>0
\end{equation*}
and
\begin{equation*}
\liminf_{\xi\to 0_{\bar\rho,X_0}}(U_{X_0,\bar\rho}(\xi)-U(\xi))>0.
\end{equation*}
Hence there exist $\varepsilon_1>0$ and $\delta_1>0$ such that
\begin{equation*}
U_{X_0,\bar\rho}(\xi)>U(X_0)+\varepsilon_1,\quad \forall\, 0<|\xi-X_0|<\delta_1
\end{equation*}
and
\begin{equation*}
U_{X_0,\bar\rho}(\xi)>U(0_{\bar\rho,X_0})+\varepsilon_1,\quad \forall\, 0<|\xi-0_{\bar\rho,X_0}|<\delta_1.
\end{equation*}
Then for all $0<|\xi-X_0|<\delta_1$ and $\bar\rho<\rho<|X_0|$, we have
\begin{eqnarray*}
U_{X_0,\rho}(\xi)&=&\left(\frac{\rho}{|\xi-X_0|}\right)^{n-2s}U\left(X_0+\frac{\rho^2(\xi-X_0)}{|\xi-X_0|^2}\right)\notag\\
&=&\left(\frac{\rho}{\bar\rho}\right)^{n-2s}\left(\frac{\bar\rho}{|\xi-X_0|}\right)^{n-2s}
U\left(X_0+\frac{\bar\rho^2\left(\frac{\bar\rho^2}{\rho^2}(\xi-X_0)\right)}{\left|\frac{\bar\rho^2}{\rho^2}(\xi-X_0)\right|^2}\right)\notag\\
&=&\left(\frac{\rho}{\bar\rho}\right)^{n-2s}U_{X_0,\bar\rho}\left(\frac{\bar\rho^2}{\rho^2}(\xi-X_0)\right)\notag\\
&\ge&U_{X_0,\bar\rho}\left(\frac{\bar\rho^2}{\rho^2}(\xi-X_0)\right)>U(X_0)+\varepsilon_1.
\end{eqnarray*}
Similarly, for all $0<|\xi-0_{\bar\rho,X_0}|<\delta_1$ and $\bar\rho<\rho<|X_0|$,
\begin{eqnarray*}
U_{X_0,\rho}(\xi)\ge U(0_{\bar\rho,X_0})+\varepsilon_1.
\end{eqnarray*}
Since $U$ is uniformly continuous in $\overline{\mathcal B}^+_{\bar\rho}(X_0)$, there exists $\delta_2\in (0,\delta_1/2)$ such that for all $|\xi_1-\xi_2|<\delta_2$ with $\xi_1,\,\xi_2\in \overline{\mathcal B}^+_{\delta_1}(0_{\bar\rho,X_0})$ or $\xi_1,\,\xi_2\in \overline{\mathcal B}^+_{\delta_1}(X_0)$,
$$|U(\xi_1)-U(\xi_2)|<\frac{\varepsilon_1}{2}.$$
Note that we have assume $\bar\rho<|X_0|$, so there exists $\delta_3>0$ such that for all $\bar\rho<\rho<\bar\rho+\delta_3<|X_0|$,
\begin{equation*}
|0_{\rho,X_0}-0_{\bar\rho,X_0}|<\frac{\delta_2}{2}.
\end{equation*}
Thus $0_{\rho,X_0}\in \mathcal B^+_{\frac{\delta_2}{2}}(0_{\bar\rho,X_0})\subset\mathcal B^+_{\delta_2}(0_{\bar\rho,X_0})$.
Hence for all $\xi\in\mathcal B^+_{\delta_2}(0_{\bar\rho,X_0})\setminus\{0_{\rho,X_0}\}$ and $\bar\rho<\rho<\bar\rho+\delta_3$, we have
\begin{equation*}
U_{X_0,\rho}(\xi)\ge U(\xi)+\frac{\varepsilon_1}{2}>U(\xi).
\end{equation*}
Similarly, we obtain that for all $\xi\in\mathcal B^+_{\delta_2}(X_0)\setminus\{X_0\}$ and $\bar\rho<\rho<\bar\rho+\delta_3$,
\begin{equation*}
U_{X_0,\rho}(\xi)\ge U(\xi)+\frac{\varepsilon_1}{2}>U(\xi).
\end{equation*}
Therefore, it holds that for all $\bar\rho<\rho<\bar\rho+\delta_3$,
\begin{equation*}
U_{X_0,\rho}>U \mbox{ in }\left[\mathcal B^+_{\delta_2}(X_0)\setminus\{X_0\}\right]\cup
\left[\mathcal B^+_{\delta_2}(0_{\bar\rho,X_0})\setminus\{0_{\rho,X_0}\}\right].
\end{equation*}

Let $\delta>0$ be a small number which will be fixed later. Denote
\begin{equation*}
K_{\delta,\delta_2}:=\overline{\mathcal B}^+_{\bar\rho-\delta}(X_0)\setminus \left[\mathcal B^+_{\delta_2}(X_0)\cup \mathcal B^+_{\delta_2}(0_{\bar\rho,X_0})\right].
\end{equation*}
From (\ref{e:gtbl}) and the compactness of $K_{\delta,\delta_2}$, there exists a positive $\varepsilon_3$ depending on $\delta$ and $\delta_2$ such that
\begin{equation*}
U_{X_0,\bar\rho}-U>\varepsilon_3 \mbox{ in }K_{\delta,\delta_2}.
\end{equation*}
Since $U$ is uniformly continuous in the compact set $K_{\delta,\delta_2}$, there is a constant $\delta_4$ satisfying $0 < \delta_4 <\delta_3$ such that for all $\bar\rho<\rho<\bar\rho+\delta_4$,
\begin{equation*}
U_{X_0,\rho}-U_{X_0,\bar\rho}>-\frac{\varepsilon_3}{2}\mbox{ in }K_{\delta,\delta_2}.
\end{equation*}
Hence
\begin{equation}\label{e:5.51}
U_{X_0,\rho}-U>\frac{\varepsilon_3}{2}\mbox{ in }K_{\delta,\delta_2}.
\end{equation}

Finally, we prove that $U_{X_0,\rho}>U$ in $\overline{\mathcal B}^+_{\rho}(X_0)\setminus\mathcal B^+_{\bar\rho-\delta}(X_0)$.
In fact, the proof is based on a narrow domain technique which is the same as that in Lemma \ref{l:l2small}. the function $U -U_{X_0,\rho}$ satisfies
\begin{equation}\label{e:divulambda-u1}
{\rm div}\left(t^{1-2s}\nabla (U -U_{X_0,\rho})\right)=0\mbox{ in }\mathcal B^+_{\rho}(X_0)\setminus \overline{\mathcal B}^+_{\bar\rho-\delta}(X_0)
\end{equation}
and
\begin{eqnarray}\label{e:b-ulambda-u1}
-\lim_{t\to 0^+}\partial_t(U -U_{X_0,\rho})
&=&\left[\frac{\lambda}{|x|^{2s}}U(x,0)-\left(\frac{\rho}{|x-x_0|}\right)^{4s}\frac{\lambda}{|x_{\rho,x_0}|^{2s}}U_{X_0,\rho}(x_{\rho,x_0},0)\right]\\
&&+\left[U^{\frac{n+2s}{n-2s}}(x,0)-U_{X_0,\rho}^{\frac{n+2s}{n-2s}}(x_{\rho,x_0},0)\right]\mbox{ on }\partial'\left[\mathcal B^+_{\rho}(X_0)\setminus \overline{\mathcal B}^+_{\bar\rho-\delta}(X_0)\right].\notag
\end{eqnarray}
Let $(U -U_{X_0,\rho})^+:=\max(0,U- U_{X_0,\rho})$ which equals to $0$ on $\partial''\left[\mathcal B^+_{\rho}(X_0)\setminus \overline{\mathcal B}^+_{\bar\rho-\delta}(X_0)\right]$ by (\ref{e:5.51}). We choose $(U - U_{X_0,\rho})^+$ as a test function in the definition of weak solution for (\ref{e:extention_bounded}).
Thus from (\ref{e:divulambda-u1}), (\ref{e:b-ulambda-u1}) and Definition \ref{d:weak-solution}, we have
\begin{eqnarray*}
&&\int_{\mathcal B^+_{\rho}(X_0)\setminus \overline{\mathcal B}^+_{\bar\rho-\delta}(X_0)}t^{1-2s}|\nabla(U -U_{X_0,\rho})^+|^2\\
&&=\int_{\partial'\left[\mathcal B^+_{\rho}(X_0)\setminus \overline{\mathcal B}^+_{\bar\rho-\delta}(X_0)\right]}
\left[\frac{\lambda}{|x|^{2s}}U(x,0)-\left(\frac{\rho}{|x-x_0|}\right)^{4s}\frac{\lambda}{|x_{\rho,x_0}|^{2s}}U_{X_0,\rho}(x_{\rho,x_0},0)\right]
(U -U_{X_0,\rho})^+\notag\\
&&\quad +\int_{\partial'\left[\mathcal B^+_{\rho}(X_0)\setminus \overline{\mathcal B}^+_{\bar\rho-\delta}(X_0)\right]}\left[U^{\frac{n+2s}{n-2s}}(x,0)-U_{X_0,\rho}^{\frac{n+2s}{n-2s}}(x_{\rho,x_0},0)\right](U -U_{X_0,\rho})^+\notag\\
&&:=T_5+T_6.\notag
\end{eqnarray*}
From Lemma \ref{l:kxx0l}, we have, in $\partial'\left[\mathcal B^+_{\rho}(X_0)\setminus \overline{\mathcal B}^+_{\bar\rho-\delta}(X_0)\right]$,
$$\left(\frac{\rho}{|x-x_0|}\right)^{4s}\frac{\lambda}{|x_{\rho,x_0}|^{2s}}\ge \frac{\lambda}{|x|^{2s}}.$$
Then
\begin{eqnarray*}
T_5&\le& \int_{\partial'\left[\mathcal B^+_{\rho}(X_0)\setminus \overline{\mathcal B}^+_{\bar\rho-\delta}(X_0)\right]}\frac{\lambda}{|x|^{2s}}\left((U -U_{X_0,\rho})^+\right)^2\notag\\
&\le& \left(\int_{\partial'\left[\mathcal B^+_{\rho}(X_0)\setminus \overline{\mathcal B}^+_{\bar\rho-\delta}(X_0)\right]}\frac{\lambda^{\frac{n}{2s}}}{|x|^{n}}\right)^{\frac{2s}{n}}
\left(\int_{\partial'\left[\mathcal B^+_{\rho}(X_0)\setminus \overline{\mathcal B}^+_{\bar\rho-\delta}(X_0)\right]}\left((U -U_{X_0,\rho})^+\right)^{\frac{2n}{n-2s}}\right)^{\frac{n-2s}{n}}.
\end{eqnarray*}
On the other hand, by the mean value theorem and H\"older inequality, we have
\begin{eqnarray*}
T_6&\le&C\int_{\partial'\left[\mathcal B^+_{\rho}(X_0)\setminus \overline{\mathcal B}^+_{\bar\rho-\delta}(X_0)\right]} U^{\frac{4s}{n-2s}}\left((U -U_{X_0,\rho}\right)^+)^2\notag\\
&\le& C\left(\int_{\partial'\left[\mathcal B^+_{\rho}(X_0)\setminus \overline{\mathcal B}^+_{\bar\rho-\delta}(X_0)\right]} U^{\frac{2n}{n-2s}}\right)^{\frac{2s}{n}}\left(\int_{\partial'\left[\mathcal B^+_{\rho}(X_0)\setminus \overline{\mathcal B}^+_{\bar\rho-\delta}(X_0)\right]}\left((U -U_{X_0,\rho})^+\right)^{\frac{2n}{n-2s}}\right)^{\frac{n-2s}{n}}.\notag
\end{eqnarray*}
Hence
\begin{eqnarray*}
T_5+T_6&\le&C\left[\left(\int_{\partial'\left[\mathcal B^+_{\rho}(X_0)\setminus \overline{\mathcal B}^+_{\bar\rho-\delta}(X_0)\right]}\frac{\lambda^{\frac{n}{2s}}}{|x|^{n}}\right)^{\frac{2s}{n}}+\left(\int_{\partial'\left[\mathcal B^+_{\rho}(X_0)\setminus \overline{\mathcal B}^+_{\bar\rho-\delta}(X_0)\right]} U^{\frac{2n}{n-2s}}\right)^{\frac{2s}{n}}\right]\notag\\
&&\,\,\times\int_{\mathcal B^+_{\rho}(X_0)\setminus \overline{\mathcal B}^+_{\bar\rho-\delta}(X_0)}t^{1-2s}|\nabla(U -U_{X_0,\rho})^+|^2.
\end{eqnarray*}
Let $\delta$ and $\delta_3$ be sufficiently small such that
\begin{equation*}
C\left[\left(\int_{\partial'\left[\mathcal B^+_{\rho}(X_0)\setminus \overline{\mathcal B}^+_{\bar\rho-\delta}(X_0)\right]}\frac{\lambda^{\frac{n}{2s}}}{|x|^{n}}\right)^{\frac{2s}{n}}+\left(\int_{\partial'\left[\mathcal B^+_{\rho}(X_0)\setminus \overline{\mathcal B}^+_{\bar\rho-\delta}(X_0)\right]} U^{\frac{2n}{n-2s}}\right)^{\frac{2s}{n}}\right]<\frac{1}{2}.
\end{equation*}
Then
\begin{equation*}
\nabla(U -U_{X_0,\rho})^+=0 \quad\mbox{ in } \mathcal B^+_{\rho}(X_0)\setminus\mathcal B^+_{\bar\rho-\delta}(X_0).
\end{equation*}
Since $(U -U_{X_0,\rho})^+ = 0$ on $\partial''\left[\mathcal B^+_{\rho}(X_0)\setminus \overline{\mathcal B}^+_{\bar\rho-\delta}(X_0)\right]$, we have
\begin{equation*}
(U -U_{X_0,\rho})^+=0\quad\mbox{ in } \mathcal B^+_{\rho}(X_0)\setminus\mathcal B^+_{\bar\rho-\delta}(X_0).
\end{equation*}
So $U_{X_0,\rho}>U$ in $\overline{\mathcal B}^+_{\rho}(X_0)\setminus\mathcal B^+_{\bar\rho-\delta}(X_0)$.

Putting the results obtained above together, we have that
\begin{equation*}
U_{X_0,\rho}\ge U, \quad\mbox{ in } \mathcal B^+_{\rho}(X_0)\setminus\{X_0,0_{\bar\rho, X_0}\}.
\end{equation*}
This contradicts with the definition of $\bar\rho$.
\end{proof}

\begin{proof}[Proof of Theorem \ref{t:regulatiry} (2)]
Let $\xi=(x,t)$. From Proposition \ref{p:n+1symmetry}, we have that for $X_0=(x_0,0)$,
\begin{equation}\label{e:symmetry-proof}
U_{X_0,\rho}(\xi)\le U(\xi), \quad\forall\, |\xi-X_0|\ge\rho,\,\xi\ne 0,\,\forall\, 0<\rho<|x_0|.
\end{equation}
Let ${\bf e}$ be any unit vector in $\mathbf R^n$, for any $l>0$, $\xi\in \mathbf R^{n+1}_+$ with $\langle(\xi-l{\bf e}), {\bf e}\rangle<0$. Choosing $x_0=R{\bf e}$ and $\rho=R-l$ in (\ref{e:symmetry-proof}) and letting $R\to+\infty$, we obtain that
\begin{equation*}
U(x,t)\ge U(x-2(\langle x,{\bf e}\rangle-l){\bf e},t).
\end{equation*}
This yields the conclusion (2) of Theorem \ref{t:regulatiry}.
\end{proof}

\begin{proof}[Proof of Corollary \ref{c:fractional-symmetry}]
The first conclusion is a direct corollary of Theorem \ref{t:regulatiry}. We now focus on the second conclusion.

From Theorem \ref{t:regulatiry} and Caffarelli-Silvestre extension, we have that the nonnegative solutions to (\ref{e:main3}) is radially symmetric about the origin and non-increasing in radial directions. In what follows, for simplicity, we use the notation $u(r)$ instead of $u(x)$ for $r=|x|$. Thus, we only need to prove that $u$ is strictly decreasing in radial directions if $u$ is nontrivial.

We argue by contradiction. Assume that there exist $0<r_1<r_2<\infty$ such that $u(r_1)=u(r_2)$. Without loss of generality, we may assume that $u(r)>u(r_1)$ for $r<r_1$ and $u(r)<u(r_2)$ for $r>r_2$. Let $A_{r_1,r_2}:=\{x\in \mathbf R^n\,|\,r_1<|x|<r_2\}$. So $u$ is a constant in $A_{r_1,r_2}$. Let $x_0$ be any point in $A_{r_1,r_2}$. Note that (\ref{e:main3}) at $x_0$ is
\begin{equation}\label{e:decrreasing-x0}
C_{n,s}{\rm P.V.}\int_{\mathbf R^n}\frac{u(x_0)-u(y)}{|x_0-y|^{n+2s}}dy=\frac{\lambda}{|x_0|^{2s}}u(x_0)+u^{\frac{n+2s}{n-2s}}(x_0).
\end{equation}
A calculation yields that
\begin{eqnarray}\label{e:fractionalL}
&&C_{n,s}{\rm P.V.}\int_{\mathbf R^n}\frac{u(x_0)-u(y)}{|x_0-y|^{n+2s}}dy\notag\\
&&=C_{n,s}\int_{\mathbf R^n\setminus \overline{A}_{r_1,r_2}}\frac{u(x_0)-u(y)}{|x_0-y|^{n+2s}}dy\notag\\
&&=C_{n,s}\int_{B_{r_1}}\frac{u(x_0)-u(y)}{|x_0-y|^{n+2s}}dy+C_{n,s}\int_{\mathbf R^n\setminus \overline{B}_{r_2}}\frac{u(x_0)-u(y)}{|x_0-y|^{n+2s}}dy.
\end{eqnarray}
Let $\eta_1=(z_1,0,\cdots,0)$, $\eta_2=(z_2,0,\cdots,0)$ with $r_1<z_1<z_2<r_2$. Observe that
\begin{equation*}
u(\eta_1)-u(y)=u(\eta_2)-u(y)<0,\quad\forall y\in B_{r_1}
\end{equation*}
and
\begin{equation*}
\frac{1}{|\eta_1-y|^{n+2s}}>\frac{1}{|\eta_2-y|^{n+2s}},\quad\forall y\in B_{r_1}.
\end{equation*}
Then we have
\begin{equation}\label{e:eta-1}
C_{n,s}\int_{B_{r_1}}\frac{u(\eta_1)-u(y)}{|\eta_1-y|^{n+2s}}dy<C_{n,s}\int_{B_{r_1}}\frac{u(\eta_2)-u(y)}{|\eta_2-y|^{n+2s}}dy.
\end{equation}

Next, we prove
\begin{equation}\label{e:eta-2}
C_{n,s}\int_{\mathbf R^n\setminus \overline{B}_{r_2}}\frac{u(\eta_1)-u(y)}{|\eta_1-y|^{n+2s}}dy<C_{n,s}\int_{\mathbf R^n\setminus \overline{B}_{r_2}}\frac{u(\eta_2)-u(y)}{|\eta_2-y|^{n+2s}}dy.
\end{equation}

In fact, let $y=(y_1,\cdots,y_n)$ and
$$I(\eta)=\int_{\mathbf R^n\setminus \overline{B}_{r_2}}\frac{u(\eta)-u(y)}{|\eta-y|^{n+2s}}dy.$$
Set
$$P=\left\{y=(y_1,\cdots,y_n)\in \mathbf R^n\,|\,y_1=\frac{z_1+z_2}{2}\right\},$$
that is, $P$ is the hyperplane orthogonal segment $[\eta_1,\eta_2]$ at $\frac{\eta_1+\eta_2}{2}$.
Define
$$D_1=\left\{y\in \mathbf R^n\setminus \overline{B}_{r_2}(0)\,|\, y_1>\frac{z_1+z_2}{2}\right\}$$
and
$$D_2=\left\{y\in \mathbf R^n\setminus \overline{B}_{r_2}(0)\,|\, y_1<\frac{z_1+z_2}{2}\right\}.$$
A direct computation yields that
\begin{eqnarray*}
I(\eta_2)-I(\eta_1)&=&\int_{D_1}\left(\frac{1}{|\eta_2-y|^{n+2s}}-\frac{1}{|\eta_1-y|^{n+2s}}\right)\left(u(\eta_1)-u(y)\right)dy\notag\\
&&+\int_{D_2}\left(\frac{1}{|\eta_2-y|^{n+2s}}-\frac{1}{|\eta_1-y|^{n+2s}}\right)\left(u(\eta_1)-u(y)\right)dy\notag\\
&:=& T_7+T_8.
\end{eqnarray*}
Since
\begin{equation*}
u(\eta_1)-u(y)=u(\eta_2)-u(y)>0,\quad\forall y\in \mathbf R^n\setminus \overline{B}_{r_2},
\end{equation*}
\begin{equation*}
\frac{1}{|\eta_1-y|^{n+2s}}<\frac{1}{|\eta_2-y|^{n+2s}},\quad\forall y\in D_1
\end{equation*}
and
\begin{equation*}
\frac{1}{|\eta_1-y|^{n+2s}}>\frac{1}{|\eta_2-y|^{n+2s}},\quad\forall y\in D_2,
\end{equation*}
we have
\begin{equation*}
T_7>0 \quad\mbox{and}\quad T_8<0.
\end{equation*}

We now compare the absolute values of $T_7$ and $T_8$. Let $\hat y$ be the reflection point of $y$ with respect to the hyperplane $P$, so $\hat 0=(z_1+z_2,0,\cdots,0)$. Define $\hat D_2=D_1\setminus \overline{B}_{r_2}(\hat 0)$,
that is, $\hat D_2$ is the reflection domain of $D_2$ with respect to $P$. Since $u$ is non-increasing in radial direction, we have
\begin{equation*}
u(\hat y)\le u(y),\quad \forall y\in D_2.
\end{equation*}
Note that $\hat\eta_1=\eta_2$, then we obtain that
\begin{eqnarray*}
T_8
&=&\int_{D_2}\left(\frac{1}{|\eta_1- \hat y|^{n+2s}}-\frac{1}{|\eta_2-\hat y|^{n+2s}}\right)\left(u(\eta_2)-u(y)\right)dy\notag\\
&=&\int_{\hat D_2}\left(\frac{1}{|\eta_1- y|^{n+2s}}-\frac{1}{|\eta_2-y|^{n+2s}}\right)\left(u(\eta_2)-u(\hat y)\right)dy\notag\\
&\ge &\int_{\hat D_2}\left(\frac{1}{|\eta_1- y|^{n+2s}}-\frac{1}{|\eta_2-y|^{n+2s}}\right)\left(u(\eta_2)-u(y)\right)dy\notag\\
&=&-\int_{\hat D_2}\left(\frac{1}{|\eta_2- y|^{n+2s}}-\frac{1}{|\eta_1-y|^{n+2s}}\right)\left(u(\eta_2)-u(y)\right)dy.
\end{eqnarray*}
By the definition of $\hat D_2$, we get
\begin{eqnarray*}
T_7+T_8\ge \int_{D_1\cap B_{r_2}(\hat 0)}\left(\frac{1}{|\eta_2-y|^{n+2s}}-\frac{1}{|\eta_1-y|^{n+2s}}\right)\left(u(\eta_1)-u(y)\right)dy
>0.
\end{eqnarray*}
Therefore, (\ref{e:eta-2}) holds.

Finally, from (\ref{e:fractionalL}), (\ref{e:eta-2}) and (\ref{e:eta-1}), we have
\begin{equation}\label{e:c1}
C_{n,s}{\rm P.V.}\int_{\mathbf R^n}\frac{u(\eta_1)-u(y)}{|\eta_1-y|^{n+2s}}dy<C_{n,s}{\rm P.V.}\int_{\mathbf R^n}\frac{u(\eta_2)-u(y)}{|\eta_2-y|^{n+2s}}dy.
\end{equation}
On the other hand, it holds that
\begin{equation*}
\frac{\lambda}{|\eta_1|^{2s}}u(\eta_1)+u^{\frac{n+2s}{n-2s}}(\eta_1)\ge \frac{\lambda}{|\eta_2|^{2s}}u(\eta_2)+u^{\frac{n+2s}{n-2s}}(\eta_2).
\end{equation*}
This is impossible since inequality (\ref{e:c1}) and equation (\ref{e:decrreasing-x0}) hold at $\eta_1$ and $\eta_2$. Therefore, the assumption at the beginning can not be true.
That is, $u(r)$ is strictly decreasing with respect to $r$. This completes the proof.
\end{proof}


\section{Proof of theorem \ref{t:lglne}}\label{s:none}

In this section, we shall prove Theorem \ref{t:lglne}.

Define
\begin{equation*}
Q(U,V)=\int_{\mathbf R^{n+1}_+}t^{1-2s}\nabla U\cdot \nabla V-\kappa_s\lambda\int_{\mathbf R^n}\frac{U(x,0)V(x,0)}{|x|^{2s}},\quad U,\,V\in \dot{H}^1(\mathbf R^{n+1}_+,t^{1-2s}).
\end{equation*}
Let
\begin{equation}\label{e:qud}
Q(U):=\int_{\mathbf R^{n+1}_+}t^{1-2s}|\nabla U|^2-\kappa_s\lambda\int_{\mathbf R^n}\frac{U^2(x,0)}{|x|^{2s}},\quad U\in \dot{H}^1(\mathbf R^{n+1}_+,t^{1-2s}).
\end{equation}
Note that by Hardy inequality and trace inequality, we have
\begin{equation}\label{e:hardy-trace}
\kappa_s\Lambda_{n,s}\int_{\mathbf R^n}\frac{U^2(x,0)}{|x|^{2s}}\le \int_{\mathbf R^{n+1}_+}t^{1-2s}|\nabla U|^2.
\end{equation}
Thus, if $\lambda<\Lambda_{n,s}$, then $Q(U)\ge 0$ for all $U\in \dot{H}^1(\mathbf R^{n+1}_+,t^{1-2s})$. By (\ref{e:hardy-trace}) we obtain that
$Q$ is continuous in $\dot{H}^1(\mathbf R^{n+1}_+,t^{1-2s})\times \dot{H}^1(\mathbf R^{n+1}_+,t^{1-2s})$. Then there exists a unique bounded symmetric operator $\mathcal L_Q\in \mathcal L(\dot{H}^1(\mathbf R^{n+1}_+,t^{1-2s}))$ such that 
$$\langle \mathcal L_Q U,V\rangle_{\dot{H}^1(\mathbf R^{n+1}_+,t^{1-2s})}=Q(U,V).$$

Define the first eigenvalue of $Q$ as
\begin{equation*}
\nu_1(\lambda)=\inf_{\begin{array}{c}
                       U\in \dot{H}^1(\mathbf R^{n+1}_+,t^{1-2s}) \\
                       \mbox{ and }U(\cdot,0)\ne 0
                     \end{array}}
\frac{Q(U)}{\kappa_s\int_{\mathbf R^n}\frac{U^2(x,0)}{|x|^{2s}}}.
\end{equation*}
Hardy-type inequality yields that if $\lambda=0$, then $\nu_1(0)=\Lambda_{n,s}$.

Let $\mathbf S^n$ be the unit $n$-dimensional sphere and
\begin{equation*}
\mathbf S^n_+=\{\theta=(\theta_1,\cdots,\theta_n,\theta_{n+1})\in \mathbf S^n:\,\theta_{n+1}>0\}.
\end{equation*}
Thus
$$\mathbf R^{n+1}_+=\mathbf R_+\times \mathbf S^n_+ \mbox{ with }X=(r,\theta):=\left(|X|,\frac{X}{|X|}\right).$$

Let $H^1(\mathbf S^n_+,\theta_{n+1}^{1-2s})$ be the completion of $C^{\infty}(\overline{\mathbf S^n_+})$ with the weighted norm given by
\begin{equation*}
\|\psi\|_{H^1(\mathbf S^n_+,\theta_{n+1}^{1-2s})}=\left(\int_{\mathbf S^n_+}\theta_{n+1}^{1-2s}(|\nabla_{\mathbf S^n}\psi(\theta)|^2+\psi^2(\theta))\right)^{\frac{1}{2}}.
\end{equation*}
Define
\begin{equation*}
L^2(\mathbf S^n_+,\theta_{n+1}^{1-2s}):=\left\{\psi:\mathbf S^n_+\to \mathbf R \mbox{ measurable such that }\int_{\mathbf S^n_+}\theta_{n+1}^{1-2s}\psi^2(\theta)<\infty\right\}.
\end{equation*}
Since the weight $\theta_{n+1}^{1-2s}$ belongs to the second Muckenhoupt class $A_2$, the embedding
$H^1(\mathbf S^n_+,\theta_{n+1}^{1-2s})\hookrightarrow L^2(\mathbf S^n_+,\theta_{n+1}^{1-2s})$ is compact (see \cite{FabesKenigCarlosCPDE1982}).
The trace operator
\begin{equation}\label{e:htrace1}
{\rm tr}:H^1(\mathbf S^n_+,\theta_{n+1}^{1-2s})\to L^2(\mathbf S^{n-1})
\end{equation}
is well-defined and satisfies the following inequality: for every $\psi\in H^1(\mathbf S^n_+,\theta_{n+1})$, one has
\begin{equation}\label{e:htrace2}
\kappa_s\Lambda_{n,s}\left(\int_{\mathbf S^{n-1}}|\psi(\theta',0)|^2\right)
\le \left(\frac{n-2s}{2}\right)^2\int_{\mathbf S^n_+}\theta_{n+1}^{1-2s}\psi^2+\int_{\mathbf S^n_+}\theta_{n+1}^{1-2s}|\nabla_{\mathbf S^n}\psi|^2,
\end{equation}
where $\mathbf S^{n-1}=\partial \mathbf S_+^n=\{(\theta',\theta_{n+1})\,|\,\theta_{n+1}=0\}$.
This trace inequality was obtained in \cite[Lemma 2.2]{FallFelliCPDE2014}.

Consider the following eigenvalue problem
\begin{equation}\label{e:eigenvalue1}
\left\{\begin{array}{rl}
         {\rm div_{\mathbf S^n}}(\theta_{n+1}^{1-2s}\nabla_{\mathbf S^n}\psi)=\left(\frac{n-2s}{2}\right)^2\theta_{n+1}^{1-2s}\psi, & \mbox{ in }\mathbf S^n_+, \\
         -\displaystyle\lim_{\theta_{n+1}\to 0}\theta_{n+1}^{1-2s}\nabla_{\mathbf S^n}\psi\cdot \mathbf e_{n+1}-\lambda\kappa_s\psi=\mu\kappa_s\psi, & \mbox{ on }\partial\mathbf S^n_+,
       \end{array}
\right.
\end{equation}
where $\mathbf e_{n+1}=(0,\cdots,0,1)$.
We say that $\mu\in \mathbf R$ is an eigenvalue of problem (\ref{e:eigenvalue1}) if there exists $\psi\in H^1(\mathbf S^n_+,\theta_{n+1}^{1-2s})\setminus\{0\}$ such that
\begin{equation*}
E(\psi,\phi)=\mu\kappa_s\int_{\mathbf S^{n-1}}\psi\phi,\quad\mbox{for all }\phi\in H^1(\mathbf S^n_+,\theta_{n+1}^{1-2s}),
\end{equation*}
where
\begin{equation*}
E:H^1(\mathbf S^n_+,\theta_{n+1}^{1-2s})\times H^1(\mathbf S^n_+,\theta_{n+1}^{1-2s})\to \mathbf R
\end{equation*}
is given by
\begin{equation*}
E(\psi,\phi)=\int_{\mathbf S^n_+}\theta_{n+1}^{1-2s}\nabla_{\mathbf S^n}\psi\cdot\nabla_{\mathbf S^n}\phi+\left(\frac{n-2s}{2}\right)^2\int_{\mathbf S^n_+}\theta_{n+1}^{1-2s}\psi\phi-\lambda\kappa_s\int_{\mathbf S^{n-1}}\psi\phi.
\end{equation*}
By (\ref{e:htrace1}) and (\ref{e:htrace2}), $E$ is continuous and weakly coercive in $H^1(\mathbf S^n_+,\theta_{n+1}^{1-2s})$. For all $\psi\in H^1(\mathbf S^n_+,\theta_{n+1}^{1-2s})$, $E(\psi):=E(\psi,\psi)$ is the corresponding quadratic form.

Consider the following minimization problem
\begin{equation*}
\mu_1(\lambda)=\inf_{\psi\in H^1(\mathbf S^n_+,\theta_{n+1}^{1-2s})\setminus\{0\}}\frac{E(\psi)}{\kappa_s\int_{\mathbf S^{n-1}}\psi^2}.
\end{equation*}
By the proof of (\ref{e:htrace2}), the constant $\Lambda_{n,s}$ is sharp (see \cite[Lemma 2.2]{FallFelliCPDE2014}). Moreover, from the compactness of $\mathbf S^n_+$ and \cite[Theorem 6.16]{Salsa08}, we deduce that
$\mu_1(\lambda)=\Lambda_s-\lambda$ is achieved at some positive function $\psi_1$ in $H^1(\mathbf S^n_+,\theta_{n+1}^{1-2s})$. Therefore,
\begin{equation}\label{e:sextension}
\left\{\begin{array}{cc}
         {\rm div}_{\mathbf S^n}(\theta_{n+1}^{1-2s}\nabla_{\mathbf S^n}\psi_1)=\left(\frac{2s-n}{2}\right)^2\theta_{n+1}^{1-2s}\psi_1 & \mbox{ in }\mathbf S^n_+,\\
          -\displaystyle\lim_{\theta_{n+1}\to 0}\theta_{n+1}^{1-2s}\nabla_{\mathbf S^n}\psi_1\cdot \mathbf e_{n+1}=\Lambda_{n,s}\kappa_s\psi_1, & \mbox{ on }\partial\mathbf S^n_+.
       \end{array}
\right.
\end{equation}

We now prove
\begin{prop}
For all $\lambda\in \mathbf R$, we have
\begin{equation*}
\mu_1(\lambda)=\nu_1(\lambda).
\end{equation*}
\end{prop}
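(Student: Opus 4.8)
The plan is to pass to polar coordinates $X=(r,\theta)$ with $r=|X|$ and $\theta=X/|X|\in\mathbf S^n_+$, for which $t=r\,\theta_{n+1}$, so that $t^{1-2s}\,dX=r^{n+1-2s}\,\theta_{n+1}^{1-2s}\,dr\,d\theta$, $|\nabla U|^2=|\partial_r U|^2+r^{-2}|\nabla_{\mathbf S^n}U|^2$, while on $\partial\mathbf R^{n+1}_+$ one has $|x|=r$, $\theta\in\mathbf S^{n-1}$ and $|x|^{-2s}\,dx=r^{n-1-2s}\,dr\,d\theta'$. Apart from these identities and the definitions of $\mu_1(\lambda)$ and $\nu_1(\lambda)$, the only quantitative input is the sharp one–dimensional weighted Hardy inequality
\begin{equation*}
\int_0^\infty |g'(r)|^2\,r^{n+1-2s}\,dr\ \ge\ \Big(\tfrac{n-2s}{2}\Big)^2\int_0^\infty |g(r)|^2\,r^{n-1-2s}\,dr,\qquad g\in C_c^\infty(0,\infty),
\end{equation*}
whose constant is optimal but not attained, a minimizing sequence $\{g_k\}$ being obtained by truncating and cutting off the formal extremal $r^{-(n-2s)/2}$ away from $0$ and $\infty$.

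To prove $\nu_1(\lambda)\ge\mu_1(\lambda)$, take any $U\in\dot H^1(\mathbf R^{n+1}_+,t^{1-2s})$ with $U(\cdot,0)\ne0$; then $\int_{\mathbf R^n}|x|^{-2s}U^2(x,0)$ is finite by (\ref{e:hardy-trace}) and positive. For a.e. $r$ the slice $\theta\mapsto U(r,\theta)$ lies in $H^1(\mathbf S^n_+,\theta_{n+1}^{1-2s})$ and has a well-defined trace on $\mathbf S^{n-1}$. Put $h(r):=\big(\int_{\mathbf S^n_+}\theta_{n+1}^{1-2s}U(r,\theta)^2\,d\theta\big)^{1/2}$; differentiating under the integral sign and applying Cauchy--Schwarz gives $|h'(r)|^2\le\int_{\mathbf S^n_+}\theta_{n+1}^{1-2s}|\partial_r U(r,\theta)|^2\,d\theta$ for a.e. $r$, so the one–dimensional Hardy inequality applied to $h$ yields
\begin{equation*}
\int_0^\infty r^{n+1-2s}\!\!\int_{\mathbf S^n_+}\theta_{n+1}^{1-2s}|\partial_r U|^2\ \ge\ \Big(\tfrac{n-2s}{2}\Big)^2\int_0^\infty r^{n-1-2s}\!\!\int_{\mathbf S^n_+}\theta_{n+1}^{1-2s}U^2 .
\end{equation*}
Writing $Q(U)$ in polar coordinates and inserting this bound, together with the fact that $E(\psi)\ge \mu_1(\lambda)\kappa_s\int_{\mathbf S^{n-1}}\psi^2$ for every $\psi\in H^1(\mathbf S^n_+,\theta_{n+1}^{1-2s})$ (this is the definition of $\mu_1(\lambda)$ when the trace of $\psi$ is nonzero, and holds trivially since $E(\psi)\ge0$ otherwise), we obtain
\begin{equation*}
Q(U)\ \ge\ \int_0^\infty r^{n-1-2s}E\big(U(r,\cdot)\big)\,dr\ \ge\ \mu_1(\lambda)\,\kappa_s\int_0^\infty r^{n-1-2s}\!\!\int_{\mathbf S^{n-1}}U(r,\theta',0)^2\,d\theta'\,dr\ =\ \mu_1(\lambda)\,\kappa_s\int_{\mathbf R^n}\frac{U^2(x,0)}{|x|^{2s}}.
\end{equation*}
Dividing by $\kappa_s\int_{\mathbf R^n}|x|^{-2s}U^2(x,0)$ and taking the infimum over $U$ gives $\nu_1(\lambda)\ge\mu_1(\lambda)$.

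For the reverse inequality, fix $\psi\in H^1(\mathbf S^n_+,\theta_{n+1}^{1-2s})$ with $\int_{\mathbf S^{n-1}}\psi^2\ne0$ (for instance $\psi=\psi_1$; by density one may take $\psi$ smooth) and set $U_k(X):=g_k(|X|)\,\psi(X/|X|)$, which belongs to $\dot H^1(\mathbf R^{n+1}_+,t^{1-2s})$ and has nonvanishing trace. Writing $A=\int_{\mathbf S^n_+}\theta_{n+1}^{1-2s}|\nabla_{\mathbf S^n}\psi|^2$, $B=\int_{\mathbf S^n_+}\theta_{n+1}^{1-2s}\psi^2$, $C=\int_{\mathbf S^{n-1}}\psi^2$, the same polar–coordinate computation gives
\begin{equation*}
\frac{Q(U_k)}{\kappa_s\int_{\mathbf R^n}|x|^{-2s}U_k^2(x,0)}\ =\ \frac{A-\lambda\kappa_s C}{\kappa_s C}\ +\ \frac{B}{\kappa_s C}\cdot\frac{\int_0^\infty |g_k'(r)|^2\,r^{n+1-2s}\,dr}{\int_0^\infty g_k(r)^2\,r^{n-1-2s}\,dr}\ \longrightarrow\ \frac{A-\lambda\kappa_s C}{\kappa_s C}+\Big(\tfrac{n-2s}{2}\Big)^2\frac{B}{\kappa_s C}\ =\ \frac{E(\psi)}{\kappa_s C}
\end{equation*}
as $k\to\infty$, where the limit uses the sharpness of the Hardy constant. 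Hence $\nu_1(\lambda)\le E(\psi)/(\kappa_s C)$, and taking the infimum over such $\psi$ yields $\nu_1(\lambda)\le\mu_1(\lambda)$. Combining the two bounds gives $\mu_1(\lambda)=\nu_1(\lambda)$.

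The step requiring the most care is the slicing/Fubini argument in the first part: one must verify that $U(r,\cdot)\in H^1(\mathbf S^n_+,\theta_{n+1}^{1-2s})$ with the expected $L^2(\mathbf S^{n-1})$-trace for a.e. $r$, that the polar decompositions of the Dirichlet energy and of the weighted boundary integral are legitimate for arbitrary $\dot H^1$-functions rather than only for smooth ones, and that $h$ is absolutely continuous with $h'$ dominated as claimed. Once these measure-theoretic points are in place, everything else is the bookkeeping above, the sole genuinely quantitative ingredient being the non-attained sharp one-dimensional Hardy inequality.
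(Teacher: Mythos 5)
The proposal is correct, but for the key inequality $\mu_1(\lambda)\le\nu_1(\lambda)$ you take a genuinely different route than the paper. You decompose $Q(U)$ into radial and angular parts in polar coordinates, apply the sharp one-dimensional weighted Hardy inequality
\begin{equation*}
\int_0^\infty|g'(r)|^2\,r^{n+1-2s}\,dr\ \ge\ \Big(\tfrac{n-2s}{2}\Big)^2\int_0^\infty|g(r)|^2\,r^{n-1-2s}\,dr
\end{equation*}
to $h(r)=\bigl(\int_{\mathbf S^n_+}\theta_{n+1}^{1-2s}U(r,\theta)^2\,d\theta\bigr)^{1/2}$, and then use the definition of $\mu_1(\lambda)$ slice by slice. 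The paper instead uses an integral transform: for $W$ smooth and compactly supported in $\overline{\mathbf R^{n+1}_+}\setminus\{0\}$, it forms the degree-$\frac{2s-n}{2}$ homogeneous function $\tilde W(X)=\bigl(\int_0^\infty r^{-(n+1-2s)}W^2(X/r)\,dr\bigr)^{1/2}$ and shows directly that $E(\tilde W)/\kappa_s\int_{\mathbf S^{n-1}}\tilde W^2 \le Q(W)/\kappa_s\int_{\mathbf R^n}|x|^{-2s}W^2(x,0)$; the constant $\bigl(\tfrac{n-2s}{2}\bigr)^2$ then emerges from the chain rule on a homogeneous function rather than from an explicit invocation of the one-dimensional Hardy inequality. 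The $\tilde W$ construction is more compact and, because it is performed on smooth compactly supported $W$ with density invoked at the end, it sidesteps the slicing/Fubini technicalities that you rightly flag as the delicate point of your version; those concerns can equally be handled in your approach by proving the estimate first for such $W$ and passing to the limit, so they are not a genuine gap. Your argument has the compensating advantage of making the radial/angular separation and the role of the sharp one-dimensional Hardy constant completely explicit. For the converse direction $\nu_1(\lambda)\le\mu_1(\lambda)$ your product test functions $g_k(|X|)\psi(X/|X|)$ with $g_k$ a Hardy-extremizing sequence are, up to notation, the same as the paper's $W_\varepsilon(X)=|X|^{(2s-n)/2}\eta_\varepsilon(|X|)\psi_1(X/|X|)$.
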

\begin{proof}
The idea of the proof is a combination of the method in \cite{Terrachini:ADE96} and Caffarelli-Silvestre extension of fractional Laplacian.

Firstly, we prove $\mu_1(\lambda)\ge \nu_1(\lambda)$. Let $\eta:\mathbf R_+\to [0,1]$ be a smooth cut-off function such that
$\eta(l)=0$, for $l\in [0,\frac{1}{2}]$, and $\eta(l)=1$, for $l\in [1,+\infty)$. For $\varepsilon\in (0,1)$, define
\begin{equation*}
\eta_{\varepsilon}(l)=\left\{\begin{array}{ll}
                               \eta(l/\varepsilon), & l\le 1, \\
                               \eta(1/(\varepsilon l)), & l\ge 1.
                             \end{array}
\right.
\end{equation*}
Notice that $\eta_{\varepsilon}(l)=\eta_{\varepsilon}(1/l)$, $\forall\,l>0$. Let $\psi_1\in H^1(\mathbf S^n_+,\theta_{n+1}^{1-2s})$ be a positive eigenfunction associated to $\mu_1(\lambda)$. Define
\begin{equation*}
W_{\varepsilon}(X)=|X|^{(2s-n)/2}\eta_{\varepsilon}(|X|)\psi_1(X/|X|).
\end{equation*}
Obviously, $W_{\varepsilon}$ belongs to $\dot{H}^1(\mathbf R^{n+1}_+,t^{1-2s})$. Moreover, $W_{\varepsilon}(X)=\varepsilon^{\frac{2s-n}{2}}W_1(\frac{X}{\varepsilon})$ when $|X|\le \varepsilon$, and, $W_{\varepsilon}(X)=\varepsilon^{\frac{n-2s}{2}}W_1(\varepsilon X)$ when $|X|\ge \varepsilon^{-1}$. Then we have
\begin{eqnarray*}
&&\int_{\mathcal B^+_{\varepsilon}(0)\cup [\mathbf R^{n+1}_+\setminus\mathcal B^+_{1/\varepsilon}(0)]}t^{1-2s}|\nabla W_{\varepsilon}|^2
+\int_{B_{\varepsilon}(0)\cup [\mathbf R^{n}\setminus B_{1/\varepsilon}(0)]}\frac{W_{\varepsilon}^2}{|X|^{2s}}\notag\\
&&=\int_{\mathbf R^{n+1}_+}t^{1-2s}|\nabla W_{1}|^2+\int_{\mathbf R^n}\frac{W_{1}^2}{|X|^{2s}}\le C,
\end{eqnarray*}
where $C$ is a positive constant independent on $\varepsilon$. Therefore,
\begin{eqnarray*}
\nu_1(\lambda)&\le&\frac{\int_{\mathbf R^{n+1}_+}t^{1-2s}|\nabla W_{\varepsilon}|^2-\kappa_s\lambda\int_{\mathbf R^n}\frac{W_{\varepsilon}^2}{|x|^{2s}}}{\kappa_s\int_{\mathbf R^n}\frac{W_{\varepsilon}^2}{|x|^{2s}}}\\
&\le& \frac{C+\int_{\mathcal B^+_{1/\varepsilon}(0)\setminus\mathcal B^+_{\varepsilon}(0)}t^{1-2s}|\nabla W_{\varepsilon}|^2
-\kappa_s\lambda\int_{ B_{1/\varepsilon}(0)\setminus B_{\varepsilon}(0)}\frac{W_{\varepsilon}^2}{|x|^{2s}}}{\kappa_s\int_{ B_{1/\varepsilon}(0)\setminus B_{\varepsilon}(0)}\frac{W_{\varepsilon}^2}{|x|^{2s}}}.\notag
\end{eqnarray*}
In polar coordinates, we have
\begin{eqnarray*}
&&\int_{\mathcal B^+_{1/\varepsilon}(0)\setminus\mathcal B^+_{\varepsilon}(0)}t^{1-2s}|\nabla W_{\varepsilon}|^2\\
&&=\int_{\varepsilon}^{1/\varepsilon}r^{-1}\int_{\mathbf S^{n+1}_+}\theta_{n+1}^{1-2s}
\left[\left(\frac{2s-n}{2}\right)^2\psi_1(\theta)^2+|\nabla_{\mathbf S^n}\psi_1(\theta)|^2\right]dSdr\notag\\
&&=2\log\varepsilon^{-1}\int_{\mathbf S^{n+1}_+}\theta_{n+1}^{1-2s}
\left[\left(\frac{2s-n}{2}\right)^2\psi_1(\theta)^2+|\nabla_{\mathbf S^n}\psi_1(\theta)|^2\right]dS
\end{eqnarray*}
and
\begin{eqnarray*}
\int_{ B_{1/\varepsilon}(0)\setminus B_{\varepsilon}(0)}\frac{W_{\varepsilon}^2}{|x|^{2s}}
&=&\int_{\varepsilon}^{1/\varepsilon}\int_{\mathbf S^{n-1}}r^{-1}\psi_1(\theta)^2dS'dr=2\log\varepsilon^{-1}\int_{\mathbf S^{n-1}}\psi_1(\theta)^2dS'.
\end{eqnarray*}
Then
\begin{eqnarray*}
\nu_1(\lambda)&\le&\frac{C+2\log\varepsilon^{-1}\left[\int_{\mathbf S^{n+1}_+}\theta_{n+1}^{1-2s}
\left(\left(\frac{2s-n}{2}\right)^2\psi_1^2+|\nabla_{\mathbf S^n}\psi_1|^2\right)-\kappa_2\lambda\int_{\mathbf S^{n-1}}\psi_1^2\right]}{2\kappa_s\log\varepsilon^{-1}\int_{\mathbf S^{n-1}}\psi_1^2}.\notag
\end{eqnarray*}
Therefore, letting $\varepsilon\to 0$, we obtain that $\nu_1(\lambda)\le \mu_1(\lambda)$.

Secondly, we prove the reverse inequality. Let $W\in C_0^{\infty}(\mathbf R^{n+1}_+\setminus\{0\})$, define
\begin{equation}\label{e:wtilde}
\tilde W(X)=\left(\int_0^{\infty}\frac{1}{r^{n+1-2s}}W^2\left(\frac{X}{r}\right)dr\right)^{\frac{1}{2}},
\end{equation}
which is a homogeneous function of degree $(2s-n)/2$. A standard calculation yields that on $\partial \mathbf R^{n+1}_+=\mathbf R^n$,
\begin{equation}\label{e:wtfx}
\int_{\mathbf S^{n-1}}\tilde W^2(\theta',0)dS'=\int_{\mathbf R^n}\frac{W^2(x,0)}{|x|^{2s}}dx.
\end{equation}
In polar coordinates, we have
\begin{eqnarray}\label{e:nwpc}
|\nabla\tilde W(X)|^2&=&\left|\nabla \left(r^{\frac{2s-n}{2}}\tilde W\left(\theta\right)\right)\right|^2\\
&=&\left(\frac{2s-n}{2}\right)^2r^{2s-2-n}\tilde W^2(\theta)+r^{2s-2-n}|\nabla_{\mathbf S^n}\tilde W(\theta)|^2.\notag
\end{eqnarray}
Therefore, choosing $r=1$ in (\ref{e:nwpc}), we get
\begin{eqnarray}\label{e:nsw}
\int_{\mathbf S^n_+}\theta_{n+1}^{1-2s}|\nabla\tilde W(\theta)|^2dS=\int_{\mathbf S^n_+}\theta_{n+1}^{1-2s}\left[\left(\frac{2s-n}{2}\right)^2\tilde W^2(\theta)+|\nabla_{\mathbf S^n}\tilde W(\theta)|^2\right]dS.
\end{eqnarray}
On the other hand, differentiating both sides of (\ref{e:wtilde}) and using H\"older inequality, we have that
\begin{equation*}
|\nabla\tilde W(X)|\le \left(\int_{0}^{\infty}\frac{1}{r^{n+3-2s}}|\nabla W|^2\left(\frac{X}{r}\right)dr\right)^{\frac{1}{2}}.
\end{equation*}
Then it holds that
\begin{eqnarray}\label{e:nw}
\int_{\mathbf S^n_+}\theta_{n+1}^{1-2s}|\nabla\tilde W(\theta)|^2dS
&\le&\int_{\mathbf S^n_+}\theta_{n+1}^{1-2s}\left(\int_{0}^{\infty}\frac{1}{r^{n+3-2s}}|\nabla W|^2\left(\frac{\theta}{r}\right)dr\right)dS\notag\\
&=&-\int_{\mathbf S^n_+}\int_{0}^{\infty}\left(\frac{\theta_{n+1}}{r}\right)^{1-2s}|\nabla W|^2\left(\frac{\theta}{r}\right)\left(\frac{1}{r}\right)^nd\left(\frac{1}{r}\right)dS\notag\\
&=&\int_{\mathbf R^{n+1}_+}t^{1-2s}|\nabla W|^2(X)dX.
\end{eqnarray}
Therefore, by (\ref{e:wtfx}), (\ref{e:nsw}), (\ref{e:nw}) and the definition of $\mu_1(\lambda)$, we obtain that
\begin{eqnarray*}
\mu_1(\lambda)&\le& \frac{\int_{\mathbf S^n_+}\theta_{n+1}^{1-2s}\left(|\nabla_{\mathbf S^n}\tilde W(\theta)|^2+\left(\frac{n-2s}{2}\right)^2\tilde W^2(\theta)\right)dS-\lambda\kappa_s\int_{\mathbf S^{n-1}}\tilde W^2(\theta',0)}{\kappa_s\int_{\mathbf S^{n-1}}\tilde W^2(\theta',0)dS'}\notag\\
&\le&\frac{\int_{\mathbf R^{n+1}_+}t^{1-2s}|\nabla W|^2(X)dX-\lambda\kappa_s\int_{\mathbf R^n}\frac{W^2(x,0)dS'}{|x|^{2s}}dx}{\kappa_s\int_{\mathbf R^n}\frac{W^2(x,0)}{|x|^{2s}}dx}.
\end{eqnarray*}
Note that $C_0^{\infty}(\mathbf R^{n+1}_+\setminus\{0\})$ is dense in $H^1(\mathbf R^{n+1}_+,t^{1-2s})$, thus $\mu_1(\lambda)\le \nu_1(\lambda)$. This completes the proof.
\end{proof}

\begin{proof}[Proof of Theorem \ref{t:lglne}]\label{sb:proof-1.5}
We argue by contradiction. Suppose there exists a nonnegative solution $U\in \dot{H}^{1}(\mathbf R^{n+1}_+,t^{1-2s})$ of (\ref{e:mail-extention-c}). Then from Proposition \ref{p:Harnack},  $U$ is positive. Define
\begin{equation*}
W_1(X)=|X|^{\frac{2s-n}{2}}\psi_1\left(\frac{X}{|X|}\right).
\end{equation*}
By (\ref{e:sextension}) we have
\begin{equation}\label{e:dw1}
 {\rm div}(t^{1-2s}\nabla W_1)=0,\quad \mbox{in } \mathbf R^{n+1}_+.
\end{equation}
A direct calculus yields
\begin{eqnarray*}
\partial_t W_1(x,t)&=&\left(\frac{2s-n}{2}\right)t|X|^{\frac{2s-n-4}{2}}\psi_1\left(\frac{X}{|X|}\right)
-t|X|^{\frac{2s-n-6}{2}}\sum_{j=1}^{n+1}[\partial_j\psi_1]\left(\frac{X}{|X|}\right)\notag\\
&&+|X|^{\frac{2s-n-2}{2}}[\partial_t\psi_1]\left(\frac{X}{|X|}\right).
\end{eqnarray*}
It follows that for $x\in\mathbf R^n\setminus \{0\}$,
\begin{eqnarray}\label{e:boundaryw1}
-\lim_{t\to 0}t^{1-2s}\partial_t W_1(x,t)&=&\Lambda_{n,s}\kappa_s\frac{W_1(x,0)}{|x|^{2s}}.
\end{eqnarray}

Let $\mathcal A_{r,R}^+:=\{X\in\mathbf R^{n+1}_+\,|\,r\le |X|\le R\}$ and $A_{r,R}=\{x\in \mathbf R^n \,|\,r\le |x|\le R\}$. Multiplying the first equation of (\ref{e:mail-extention-c}) by $ W_1$, integrating the terms in $\mathcal A_{r,R}^+$ and by the divergence theorem, we obtain that
\begin{eqnarray}\label{e:uwd}
\int_{\mathcal A_{r,R}^+}t^{1-2s}\nabla U\cdot\nabla W_1&=&\int_{\mathbf S^n_{R,+}} W_1t^{1-2s}\partial_r U-\int_{\mathbf S^n_{r,+}} W_1t^{1-2s}\partial_r U\\
&&+\kappa_{s}\int_{A_{r,R}}\frac{\lambda}{|x|^{2s}}W_1(x,0)U(x,0)+W_1(x,0) U^{\frac{n+2s}{n-2s}}(x,0).\notag
\end{eqnarray}
Similarly, multiplying (\ref{e:dw1}) by $U$ and using (\ref{e:boundaryw1}) yield that
\begin{eqnarray}\label{e:wud}
\int_{\mathcal A_{r,R}^+}t^{1-2s}\nabla W_1\cdot\nabla U&=&\int_{\mathbf S^n_{R,+}} U t^{1-2s}\partial_r W_1-\int_{\mathbf S^n_{r,+}}U t^{1-2s}\partial_r W_1\\
&&+\kappa_{s}\int_{A_{r,R}}\frac{\Lambda_{n,s}}{|x|^{2s}}U(x,0)W_1(x,0).\notag
\end{eqnarray}
Therefore, by (\ref{e:uwd}) and (\ref{e:wud}), it holds that
\begin{eqnarray}\label{e:llwu}
&&\kappa_{s}\int_{A_{r,R}}\frac{\lambda-\Lambda_{n,s}}{|x|}W_1(x,0)U(x,0)+W_1(x,0) U^{\frac{n+2s}{n-2s}}(x,0)\\
&&=\int_{\mathbf S^n_{R,+}}\left(t^{1-2s} U \partial_r W_1-t^{1-2s}W_1 \partial_r U\right)-\int_{\mathbf S^n_{r,+}}\left(t^{1-2s}U \partial_r W_1-t^{1-2s}W_1 \partial_r U\right).\notag
\end{eqnarray}
Since $\lambda\ge \Lambda_{n,s}$ and both $W_1$ and $U$ are positive, the left side of (\ref{e:llwu}) is positive for all $r, R$.

On the other hand, by H\"older inequality, the right side of (\ref{e:llwu}) becomes
\begin{eqnarray*}
&&\left|\int_{\mathbf S^n_{R,+}}\left(t^{1-2s} U \partial_r W_1-t^{1-2s}W_1 \partial_r U\right)\right|\\
&&\le\left(\int_{\mathbf S^n_{R,+}}t^{1-2s}|U|^{2\gamma}\right)^{\frac{1}{2\gamma}}\left(\int_{\mathbf S^n_{R,+}}t^{1-2s}|\partial_r W_1|^{(2\gamma)'}\right)^{\frac{1}{(2\gamma)'}}\notag\\
&&\quad\quad\quad\quad\quad+\left(\int_{\mathbf S^n_{R,+}}t^{1-2s}|\partial_r U|^2\right)^{\frac{1}{2}}\left(\int_{\mathbf S^n_{R,+}}t^{1-2s}|W_1|^2\right)^{\frac{1}{2}},\notag
\end{eqnarray*}
where $\gamma$ is the constant given by Lemma \ref{l:ewss} and $1/(2\gamma)+1/(2\gamma)'=1$. Direct computations yield that
\begin{eqnarray*}
\left(\int_{\mathbf S^n_{R,+}}t^{1-2s}|\partial_r W_1|^{(2\gamma)'}\right)^{\frac{1}{(2\gamma)'}}&=&\left(\frac{n-2s}{2}\right)\left(\int_{\mathbf S^n_{R,+}}t^{1-2s}R^{\frac{2s-n-2}{2}(2\gamma)'}|\psi_1|^{(2\gamma)'}\right)^{\frac{1}{(2\gamma)'}}\notag\\
&=&\left(\frac{n-2s}{2}\right)R^{\frac{1}{2\gamma}}\left(\int_{\mathbf S^n_{1,+}}\theta_{n+1}^{1-2s}|\psi_1(\theta)|^{(2\gamma)'}\right)^{\frac{1}{(2\gamma)'}}
\end{eqnarray*}
and
\begin{eqnarray*}
\left(\int_{\mathbf S^n_{R,+}}t^{1-2s}|W_1|^2\right)^{\frac{1}{2}}&=&R^{\frac{1}{2}}\left(\int_{\mathbf S^n_{1,+}}\theta_{n+1}^{1-2s}\psi_1^2(\theta)d\theta\right)^{\frac{1}{2}}.
\end{eqnarray*}
Since $1\le(2\gamma)'<2$, it holds that
\begin{equation*}
\left(\int_{\mathbf S^n_{1,+}}\theta_{n+1}^{1-2s}|\psi_1(\theta)|^{(2\gamma)'}\right)^{\frac{1}{(2\gamma)'}}<\infty.
\end{equation*}
Then we have
\begin{eqnarray*}
&&\left|\int_{\mathbf S^n_{R,+}}\left(t^{1-2s} U \partial_r W_1-t^{1-2s}W_1 \partial_r U\right)\right|\\
&&\le\left(\frac{n-2s}{2}\right)\left(\int_{\mathbf S^n_{1,+}}\theta_{n+1}^{1-2s}|\psi_1(\theta)|^{(2\gamma)'}\right)^{\frac{1}{(2\gamma)'}}\left(R\int_{\mathbf S^n_{R,+}}t^{1-2s}|U|^{2\gamma}\right)^{\frac{1}{2\gamma}}\notag\\
&&\quad\quad\quad+\left(\int_{\mathbf S^n_{1,+}}\theta_{n+1}^{1-2s}\psi_1^2(\theta)d\theta\right)^{\frac{1}{2}}\left(R\int_{\mathbf S^n_{R,+}}t^{1-2s}|\partial_r U|^2\right)^{\frac{1}{2}}.\notag
\end{eqnarray*}
Note that $U\in \dot{H}^1(\mathbf R^{n+1}_+,t^{1-2s})$, by Lemma \ref{l:ewss}, $U\in L^{2\gamma}(\mathbf R^{n+1}_+,t^{1-2s})$. Therefore, there exists a sequence $R_k\to \infty$ ($k\to \infty$) such that
\begin{equation} \label{e:rk}
\left(R_k\int_{\mathbf S^n_{R_k,+}}t^{1-2s}|U|^{2\gamma}\right)^{\frac{1}{2\gamma}}+\left(R_k\int_{\mathbf S^n_{R_k,+}}t^{1-2s}|\partial_r U|^2\right)^{\frac{1}{2}}\to 0.
\end{equation}
Indeed, if otherwise, there is a constant $C>0$ such that
\begin{equation*}
R\int_{\mathbf S^n_{R,+}} t^{1-2s}|U|^{2\gamma}\ge C\quad\mbox{ and }\quad R\int_{\mathbf S^n_{R,+}}t^{1-2s} |\partial_r U|^2\ge C.
\end{equation*}
Then
\begin{equation*}
\int_{\mathbf S^n_{R,+}}t^{1-2s} |U|^{\frac{2(n+1)}{n-1}}\ge \frac{C}{R}\quad\mbox{ and }\quad \int_{\mathbf S^n_{R,+}}t^{1-2s}|\partial_r U|^2\ge \frac{C}{R}.
\end{equation*}
It is impossible since $R^{-1}$ is not integrable at $\infty$. So we have (\ref{e:rk}).

Similarly, there exists a sequence $r_k\to 0$ ($k\to\infty$) such that
\begin{equation*}
\left(r_k\int_{\mathbf S^n_{r_k,+}}t^{1-2s} |U|^{2\gamma}\right)^{\frac{1}{2\gamma}}+\left(r_k\int_{\mathbf S^n_{r_k,+}}t^{1-2s}|\partial_r U|^2\right)^{\frac{1}{2}}\to 0.
\end{equation*}
Thus we have
\begin{eqnarray*}
\int_{\mathbf S^n_{R_k,+}}\left( t^{1-2s}U \partial_{r} W_1-t^{1-2s}W_1 \partial_{r} U\right)-\int_{\mathbf S^n_{r_k,+}}\left(t^{1-2s}U \partial_r W_1-t^{1-2s}W_1 \partial_r U\right)\to 0.
\end{eqnarray*}
From (\ref{e:llwu}), it is impossible. This completes the proof.
\end{proof}


\section{Proof of Theorem \ref{t:existence-6}}\label{s:existence-d}

In this section, we prove Theorem \ref{t:existence-6}.

Let
\begin{equation}\label{e:minimization}
\beta(\lambda)=\inf_{U\in \dot{H}(\mathbf R^{n+1}_+,t^{1-2s})}\frac{Q(U)}{\|U(\cdot,0)\|_{2^*(s)}^2},
\end{equation}
where $Q(U)$ is given by (\ref{e:qud}).
By Sobolev inequality and trace inequality, we have
\begin{equation*}
\kappa_s S_{n,s}\|U(\cdot,0)\|_{2^*(s)}\le \int_{\mathbf R^{n+1}_+}t^{1-2s}|\nabla U|^2, \quad \forall \,U\in \dot{H}^{1}(\mathbf R^{n+1}_+,t^{1-2s}).
\end{equation*}
So $\beta(\lambda)$ is well-defined.
Recall that if $\lambda<\Lambda_{n,s}$, then $Q(U)\ge 0$. Thus $\beta(\lambda)> 0$.
\begin{lemma}\label{l:qukv}
If $\{U_k\}\subset \dot{H}^{1}(\mathbf R^{n+1}_+,t^{1-2s})$ weakly converges to $V\in \dot{H}^{1}(\mathbf R^{n+1}_+,t^{1-2s})$, then
\begin{equation}\label{e:qu0uk}
Q(U_k)=Q(V)+Q(U_k-V)+o(1).
\end{equation}
\end{lemma}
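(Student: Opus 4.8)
The plan is to reduce the identity to the vanishing of a cross term of the bilinear form attached to $Q$, and then to kill that cross term by a three–region splitting in the base variable.

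Write $W_k:=U_k-V$, so that $W_k\rightharpoonup 0$ weakly in $\dot{H}^1(\mathbf R^{n+1}_+,t^{1-2s})$, and let
\begin{equation*}
Q(U,W)=\int_{\mathbf R^{n+1}_+}t^{1-2s}\nabla U\cdot\nabla W-\kappa_s\lambda\int_{\mathbf R^n}\frac{U(x,0)W(x,0)}{|x|^{2s}}
\end{equation*}
be the symmetric bilinear form associated with $Q$, which is well defined thanks to (\ref{e:hardy-trace}) and the trace inequality. Expanding $Q(U_k)=Q(V+W_k)=Q(V)+Q(W_k)+2Q(V,W_k)$, identity (\ref{e:qu0uk}) is equivalent to $Q(V,W_k)\to 0$. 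The gradient part of $Q(V,W_k)$ is immediate: the inner product of $\dot{H}^1(\mathbf R^{n+1}_+,t^{1-2s})$ is exactly $\langle U,W\rangle=\int_{\mathbf R^{n+1}_+}t^{1-2s}\nabla U\cdot\nabla W$, so $\int_{\mathbf R^{n+1}_+}t^{1-2s}\nabla V\cdot\nabla W_k=\langle V,W_k\rangle\to 0$ by weak convergence. It therefore remains to show that
\begin{equation*}
\int_{\mathbf R^n}\frac{V(x,0)W_k(x,0)}{|x|^{2s}}\,dx\longrightarrow 0.
\end{equation*}

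For this I would fix $\varepsilon>0$ and split $\mathbf R^n=\{|x|\le\delta\}\cup A\cup\{|x|\ge M\}$ with $A=\{\delta\le|x|\le M\}$. On the inner and outer pieces, Cauchy–Schwarz, the Hardy–trace inequality (\ref{e:hardy-trace}), and the boundedness of $\{W_k\}$ in $\dot{H}^1(\mathbf R^{n+1}_+,t^{1-2s})$ give
\begin{equation*}
\left|\int_{\{|x|\le\delta\}\cup\{|x|\ge M\}}\frac{V(x,0)W_k(x,0)}{|x|^{2s}}\right|\le C\left(\int_{\{|x|\le\delta\}\cup\{|x|\ge M\}}\frac{V^2(x,0)}{|x|^{2s}}\right)^{1/2},
\end{equation*}
and since $\int_{\mathbf R^n}|x|^{-2s}V^2(x,0)<\infty$ by (\ref{e:hardy-trace}), the right-hand side is the tail of a convergent integral, hence $\le\varepsilon$ once $\delta$ is small enough and $M$ large enough, uniformly in $k$. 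On the fixed annulus $A$ the weight $|x|^{-2s}$ is bounded and $V(\cdot,0)\in\dot{H}^s(\mathbf R^n)\subset L^2_{loc}(\mathbf R^n)$ (Lemma \ref{l:eminimal}), so $|x|^{-2s}V(\cdot,0)\in L^2(A)$; on the other hand $\{W_k\}$, being bounded in $\dot{H}^1(\mathbf R^{n+1}_+,t^{1-2s})$, is bounded in $H^1(Q_R,t^{1-2s})$ for every $R$ (its weighted $L^2$–norm on compacts being controlled via Lemma \ref{l:ewss} and Hölder), and $W_k(\cdot,0)\rightharpoonup 0$ in $L^{2^*(s)}(\mathbf R^n)$ by continuity of the trace, so Lemma \ref{p:trace} together with the compact embedding $H^s(\Omega)\hookrightarrow\hookrightarrow L^2(\Omega)$ on bounded $\Omega$ forces $W_k(\cdot,0)\to 0$ strongly in $L^2(A)$. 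Hence $\int_A|x|^{-2s}V(x,0)W_k(x,0)\to 0$, and collecting the three contributions yields $\limsup_k\big|\int_{\mathbf R^n}|x|^{-2s}V(x,0)W_k(x,0)\big|\le C\varepsilon$; letting $\varepsilon\to 0$ concludes.

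I expect the compactness on the annulus to be the only genuine difficulty: one must know that sequences bounded in $\dot{H}^1(\mathbf R^{n+1}_+,t^{1-2s})$ have traces precompact in $L^2$ over bounded subsets of $\mathbf R^n$, which is obtained by combining the weighted Sobolev embedding (Lemma \ref{l:ewss}), the trace estimate (Lemma \ref{p:trace}), and classical Rellich compactness. Once this is available, the behaviour of the weighted term near the origin and near infinity follows routinely from the Hardy–trace inequality, and the remainder is bookkeeping.
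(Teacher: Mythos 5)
Your proof is correct, but it takes a genuinely different route from the paper's. The paper observes, just before the lemma, that $Q$ is a continuous symmetric bilinear form on $\dot{H}^1(\mathbf R^{n+1}_+,t^{1-2s})\times\dot{H}^1(\mathbf R^{n+1}_+,t^{1-2s})$ (a consequence of (\ref{e:hardy-trace}) and the trace inequality) and hence, by the Riesz representation theorem, there is a bounded self-adjoint operator $\mathcal L_Q$ with $Q(U,V)=\langle\mathcal L_Q U,V\rangle_{\dot{H}^1(\mathbf R^{n+1}_+,t^{1-2s})}$. Then $Q(V,U_k-V)=\langle\mathcal L_Q V,U_k-V\rangle\to 0$ follows in one line from the definition of weak convergence applied to the fixed element $\mathcal L_Q V$; no compactness is needed at all. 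You instead unravel the Hardy cross-term by hand: tail estimates near $0$ and $\infty$ using the Hardy--trace bound, and Rellich compactness on the middle annulus to get $W_k(\cdot,0)\to 0$ in $L^2(A)$. The analysis is sound (in particular, the trace is a bounded linear map so it preserves weak convergence, $W_k(\cdot,0)\rightharpoonup 0$ in $\dot{H}^s$, and then $H^s\hookrightarrow\hookrightarrow L^2_{\rm loc}$ gives the strong local convergence you need), but you are re-proving a special case of what the continuity of $Q$ and Riesz already give for free. You even half-notice this: the cleanest way to close the gap you flag is to apply the paper's abstract argument to the Hardy part alone, representing $u\mapsto\int|x|^{-2s}uV(\cdot,0)\,dx$ as a bounded functional on $\dot{H}^1(\mathbf R^{n+1}_+,t^{1-2s})$ and invoking weak convergence, which dispenses with the three-region splitting and the compact embedding entirely.
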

\begin{proof}
A direct computation yields
\begin{equation*}
Q(U_k)=Q(V+U_k-V)=Q(V)+Q(U_k-V)+2Q(V,U_k-V).
\end{equation*}
Note that
\begin{equation*}
Q(V,U_k-V)=\langle \mathcal L_Q V,U_k-V\rangle_{\dot{H}^{1}(\mathbf R^{n+1}_+,t^{1-2s})}\to 0,
\end{equation*}
then (\ref{e:qu0uk}) follows.
\end{proof}

Firstly, we consider a minimization sequence of (\ref{e:minimization}) with nontrivial weak limit.
\begin{lemma}\label{l:nontrivial-limit}
Let $0\le \lambda< \Lambda_{n,s}$. Suppose that $\{U_k\}\subset \dot{H}^{1}(\mathbf R^{n+1}_+,t^{1-2s})$ is a minimization sequence for (\ref{e:minimization}) and weakly converges to $V\not\equiv 0$, then $V$ is a minimum and $U_k\to V$ with respect to the norm topology in $\dot{H}^{1}(\mathbf R^{n+1}_+,t^{1-2s})$.
\end{lemma}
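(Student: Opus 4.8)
The plan is to run a Brezis--Lieb type splitting argument, combining the quadratic decomposition of Lemma~\ref{l:qukv} with the strict subadditivity of the power $t\mapsto t^{2/2^*(s)}$. First I would normalize: since the Rayleigh quotient in (\ref{e:minimization}) is invariant under $U\mapsto cU$, one may assume $\|U_k(\cdot,0)\|_{2^*(s)}=1$ and $Q(U_k)\to\beta(\lambda)$. Because $0\le\lambda<\Lambda_{n,s}$, the Hardy--trace inequality (\ref{e:hardy-trace}) gives $Q(U_k)\ge\bigl(1-\lambda/\Lambda_{n,s}\bigr)\|\nabla U_k\|_{L^2(\mathbf R^{n+1}_+,t^{1-2s})}^2$, so $\{U_k\}$ is bounded in $\dot H^1(\mathbf R^{n+1}_+,t^{1-2s})$ and, along a subsequence, $U_k\rightharpoonup V$ there. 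By Lemma~\ref{l:eminimal} the traces $U_k(\cdot,0)$ are bounded in $\dot H^s(\mathbf R^n)\hookrightarrow L^{2^*(s)}(\mathbf R^n)$, and by the local compactness of the trace embedding ($H^s(B_R)\hookrightarrow L^2(B_R)$ compactly) one may pass to a further subsequence so that $U_k(\cdot,0)\to V(\cdot,0)$ a.e.\ on $\mathbf R^n$.

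Next, the Brezis--Lieb lemma applied to $\{U_k(\cdot,0)\}$ in $L^{2^*(s)}$ yields
\[\|U_k(\cdot,0)\|_{2^*(s)}^{2^*(s)}=\|V(\cdot,0)\|_{2^*(s)}^{2^*(s)}+\|(U_k-V)(\cdot,0)\|_{2^*(s)}^{2^*(s)}+o(1),\]
so with $a:=\|V(\cdot,0)\|_{2^*(s)}^{2^*(s)}\in[0,1]$ one has $\|(U_k-V)(\cdot,0)\|_{2^*(s)}^{2^*(s)}\to 1-a$. By Lemma~\ref{l:qukv}, $Q(U_k-V)=Q(U_k)-Q(V)+o(1)\to\beta(\lambda)-Q(V)$, and the definition of $\beta(\lambda)$ together with $Q\ge 0$ gives $Q(V)\ge\beta(\lambda)a^{2/2^*(s)}$ and $\beta(\lambda)-Q(V)=\lim Q(U_k-V)\ge\beta(\lambda)(1-a)^{2/2^*(s)}$. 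Adding these and dividing by $\beta(\lambda)>0$ gives $1\ge a^{2/2^*(s)}+(1-a)^{2/2^*(s)}$; since $2/2^*(s)=(n-2s)/n\in(0,1)$, strict concavity of $t\mapsto t^{2/2^*(s)}$ on $[0,\infty)$ (with value $0$ at $0$) forces $a\in\{0,1\}$. The case $a=0$ is excluded: then $V(\cdot,0)\equiv 0$, hence $Q(V)=\|\nabla V\|_{L^2(\mathbf R^{n+1}_+,t^{1-2s})}^2>0$ since $V\not\equiv 0$, while $\beta(\lambda)-Q(V)\ge\beta(\lambda)$ forces $Q(V)\le 0$, a contradiction. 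Thus $a=1$, so $\|V(\cdot,0)\|_{2^*(s)}=1$ and $\|(U_k-V)(\cdot,0)\|_{2^*(s)}\to 0$.

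Finally, from $\beta(\lambda)-Q(V)=\lim Q(U_k-V)\ge 0$ one gets $Q(V)\le\beta(\lambda)$, while $Q(V)\ge\beta(\lambda)\|V(\cdot,0)\|_{2^*(s)}^2=\beta(\lambda)$; hence $Q(V)=\beta(\lambda)$ and, as $V(\cdot,0)\not\equiv 0$, $V$ attains the infimum. Moreover $\lim Q(U_k-V)=0$, and one more use of (\ref{e:hardy-trace}) gives $Q(U_k-V)\ge\bigl(1-\lambda/\Lambda_{n,s}\bigr)\|\nabla(U_k-V)\|_{L^2(\mathbf R^{n+1}_+,t^{1-2s})}^2$ with $1-\lambda/\Lambda_{n,s}>0$, so $U_k\to V$ in $\dot H^1(\mathbf R^{n+1}_+,t^{1-2s})$. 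The main obstacle is the step excluding the dichotomy $0<a<1$: this is exactly where one needs the exact Brezis--Lieb identity for the trace norm (itself resting on the a.e.\ convergence extracted from local compactness of the trace) and the \emph{strict} subadditivity of $t\mapsto t^{2/2^*(s)}$, which prevents any mass from escaping into a vanishing remainder.
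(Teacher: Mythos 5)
Your argument is correct and follows the same strategy as the paper: extract a.e.\ convergence of the traces, apply Brezis--Lieb in $L^{2^*(s)}$, use Lemma~\ref{l:qukv} to split $Q$, exploit the strict superadditivity coming from $2/2^*(s)\in(0,1)$, and finish with the Hardy--trace inequality (\ref{e:hardy-trace}) to upgrade $Q(U_k-V)\to 0$ to norm convergence. The only difference is one of packaging: the paper normalizes implicitly and works with the inequality $\frac{Q(V)}{\|V(\cdot,0)\|_{2^*(s)}^2}\le \beta(\lambda)\cdot\frac{Q(U_k)-Q(U_k-V)+o(1)}{\bigl(Q^{2^*(s)/2}(U_k)-Q^{2^*(s)/2}(U_k-V)+o(1)\bigr)^{2/2^*(s)}}$, then deduces by contradiction from strict convexity of $t\mapsto t^{2^*(s)/2}$ that $Q(U_k-V)\to 0$; you instead set $a=\|V(\cdot,0)\|_{2^*(s)}^{2^*(s)}$ after normalizing $\|U_k(\cdot,0)\|_{2^*(s)}=1$, derive $1\ge a^{2/2^*(s)}+(1-a)^{2/2^*(s)}$, conclude $a\in\{0,1\}$, and rule out $a=0$ by noting that then $Q(V)=\|\nabla V\|^2_{L^2(t^{1-2s})}>0$ while the splitting forces $Q(V)\le 0$. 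Both variants hinge on exactly the same subadditivity mechanism; your dichotomy version is a bit more explicit about where the strictness is used (the case $0<a<1$) and makes the role of $V\not\equiv 0$ transparent, while the paper's ratio phrasing reaches the same conclusion in a more compressed form.
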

\begin{proof}
By trace inequality and compactness of the imbedding $\dot{H}^s(\mathbf R^n)\hookrightarrow L^p_{\rm loc}(\mathbf R^n)$, $1\le p< 2^*(s)$,  $U_k(\cdot,0)$ converges to $V(\cdot,0)$ almost everywhere. From Brezis-Lieb's result in \cite{BrezisLiebPAMS83}, it holds that
\begin{equation*}
\|U_k(\cdot,0)\|_{2^*(s)}^{2^*(s)}=\|V(\cdot,0)\|_{2^*(s)}^{2^*(s)}+\|U_k(\cdot,0)-V(\cdot,0)\|_{2^*(s)}^{2^*(s)}+o(1).
\end{equation*}
Since $Q(U_k)=\beta(\lambda)\|U_k(\cdot,0)\|_{2^*(s)}^2+o(1)$ and $Q(U_k-V)\ge \beta(\lambda)\|U_k(\cdot,0)-V(\cdot, 0)\|_{2^*(s)}^2$, we obtain that
\begin{eqnarray}\label{e:supukv}
\frac{Q(V)}{\|V(\cdot,0)\|_{2^*(s)}^2}
&=&\frac{Q(U_k)-Q(U_k-V)+o(1)}{\left(\|U_k(\cdot,0)\|_{2^*(s)}^{2^*(s)}-\|U_k(\cdot,0)-V(\cdot,0)\|_{2^*(s)}^{2^*(s)}+o(1)\right)^{\frac{2}{2^*(s)}}}\notag\\
&\le&\beta(\lambda)\frac{Q(U_k)-Q(U_k-V)+o(1)}{\left(Q^{\frac{2^*(s)}{2}}(U_k)-Q^{\frac{2^*(s)}{2}}(U_k-V)+o(1)\right)^{\frac{2}{2^*(s)}}}.
\end{eqnarray}
From Lemma \ref{l:qukv} and positivity of $Q(V)$, we get
\begin{eqnarray*}
0\le\limsup_k Q(U_k-V)=\limsup_k Q(U_k)-Q(V)<\limsup_k Q(U_k).
\end{eqnarray*}
We now argue by contradiction. If there exists a subsequence of $\{U_{k_i}\}$ such that $\displaystyle\lim_{i\to \infty} Q(U_{k_i}-V)>0$, then
\begin{equation*}
\lim_i\frac{Q(U_{k_i})-Q(U_{k_i}-V)+o(1)}{\left(Q^{\frac{2^*(s)}{2}}(U_{k_i})-Q^{\frac{2^*(s)}{2}}(U_{k_i}-V)+o(1)\right)^{\frac{2}{2^*(s)}}}<1.
\end{equation*}
This is impossible because of the definition of $\beta(\lambda)$ and (\ref{e:supukv}). Therefore, $\displaystyle\lim_{k\to \infty}Q(U_k-V)= 0$. Furthermore, recalling that $\lambda<\Lambda_{n,s}$ and (\ref{e:hardy-trace}), we have
\begin{eqnarray*}
Q(U_k-V)&=&\int_{\mathbf R^{n+1}_+}t^{1-2s}|\nabla (U_k-V)|^2-\kappa_s\lambda \int_{\mathbf R^n}\frac{(U_k(x,0)-V(x,0))^2}{|x|^{2s}}\notag\\
&\ge&\frac{\Lambda_{n,s}-\lambda}{\Lambda_{n,s}}\int_{\mathbf R^{n+1}_+}t^{1-2s}|\nabla (U_k-V)|^2\ge 0.
\end{eqnarray*}
That is, $U_k$ converges to $V$ in norm topology of $\dot{H}^{1}(\mathbf R^{n+1}_+,t^{1-2s})$. This completes the proof.
\end{proof}

Secondly, we investigate a minimization sequence of (\ref{e:minimization}) with trivial weak limit.
\begin{lemma}\label{l:rinvariant}
Let $U\in \dot{H}^{1}(\mathbf R^{n+1}_+,t^{1-2s})$,
\begin{equation*}
\mathcal R(U)(z)=R^{-\frac{n-2s}{2}}U\left(\frac{z}{R}\right),\quad R>0.
\end{equation*}
Then
\begin{equation*}
Q(\mathcal R(U))=Q(U),
\end{equation*}
\begin{equation*}
\|\mathcal R (U)(\cdot,0)\|_{2^*(s)}=\|U(\cdot, 0)\|_{2^*(s)}
\end{equation*}
and
\begin{equation*}
\|\mathcal R(U)(\cdot,0)\|_{\dot{H}^s(\mathbf R^n)}=\|U(\cdot,0)\|_{\dot{H}^s(\mathbf R^n)}.
\end{equation*}
\end{lemma}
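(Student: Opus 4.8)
The plan is to prove all three identities by a single change of variables, exploiting that the dilation $z\mapsto z/R$ combined with the normalization $R^{-(n-2s)/2}$ is exactly the symmetry that leaves the critical Sobolev/trace structure invariant, so that every scale-invariant quantity is unchanged. First I would set $V:=\mathcal R(U)$, record $\nabla V(z)=R^{-(n-2s)/2-1}(\nabla U)(z/R)$, and in $\int_{\mathbf R^{n+1}_+}t^{1-2s}|\nabla V|^2$ substitute $z=Rw$ with $w=(y,\tau)$: the Jacobian contributes $R^{n+1}$, the anisotropic weight contributes $t^{1-2s}=R^{1-2s}\tau^{1-2s}$, and $|\nabla V|^2$ contributes $R^{-(n-2s)-2}$, so the total power of $R$ is $(n+1)+(1-2s)-(n-2s)-2=0$ and $\int t^{1-2s}|\nabla\mathcal R(U)|^2=\int t^{1-2s}|\nabla U|^2$. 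In particular $\mathcal R$ is an isometry of $\dot H^1(\mathbf R^{n+1}_+,t^{1-2s})$, which also records that the map is well defined on that space.

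Next I would treat the Hardy/trace term: since $V(x,0)^2=R^{-(n-2s)}U(x/R,0)^2$, the substitution $x=Ry$ gives the Jacobian $R^n$ and $|x|^{-2s}=R^{-2s}|y|^{-2s}$, and again $-(n-2s)+n-2s=0$, so $\int_{\mathbf R^n}|x|^{-2s}V(x,0)^2\,dx=\int_{\mathbf R^n}|y|^{-2s}U(y,0)^2\,dy$; combining this with the gradient computation (and the factor $-\kappa_s\lambda$) yields $Q(\mathcal R(U))=Q(U)$. For the $L^{2^*(s)}$ statement I would use that $\tfrac{n-2s}{2}\,2^*(s)=\tfrac{n-2s}{2}\cdot\tfrac{2n}{n-2s}=n$, so the prefactor exponent exactly cancels the Jacobian under $x=Ry$, giving $\|\mathcal R(U)(\cdot,0)\|_{2^*(s)}=\|U(\cdot,0)\|_{2^*(s)}$. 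For the homogeneous Sobolev norm I would pass to the Fourier side via the definition (\ref{e:inner-product-f}): writing $u:=U(\cdot,0)$ and $u_R(x):=R^{-(n-2s)/2}u(x/R)$, one has $\widehat{u_R}(\xi)=R^{(n+2s)/2}\hat u(R\xi)$, and substituting $\zeta=R\xi$ in $\int|\xi|^{2s}|\widehat{u_R}(\xi)|^2\,d\xi$ again makes all powers of $R$ cancel, so $\|\mathcal R(U)(\cdot,0)\|_{\dot H^s(\mathbf R^n)}=\|U(\cdot,0)\|_{\dot H^s(\mathbf R^n)}$.

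Throughout I would first carry out each substitution for $U\in C_0^\infty(\overline{\mathbf R^{n+1}_+})$ (resp. $u\in C_0^\infty(\mathbf R^n)$) and then extend to the full homogeneous spaces by density, which is legitimate since $\mathcal R$ and the trace are bounded linear operators (in fact isometries for the relevant norms). There is no genuine obstacle in this lemma; the only thing requiring care is the exponent bookkeeping — keeping straight the contributions of the Jacobian $R^{n+1}$ (or $R^n$), the weight $t^{1-2s}$ under the $t$-scaling, the normalizing factor $R^{-(n-2s)/2}$, and the homogeneous weights $|x|^{-2s}$ and $|\xi|^{2s}$ — together with noting at the outset that $\mathcal R$ genuinely preserves $\dot H^1(\mathbf R^{n+1}_+,t^{1-2s})$, as the $\lambda=0$ case of the first computation shows.
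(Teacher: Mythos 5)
Your computation is correct and is exactly the ``direct computations'' that the paper's one-line proof invokes: a change of variables in each integral, with the exponent bookkeeping showing that all powers of $R$ cancel. The only cosmetic difference is that you pass to the Fourier side for the $\dot H^s$ identity, which is the natural route given the paper's definition (\ref{e:inner-product-f}); there is no substantive divergence from the paper's approach.
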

\begin{proof}
The conclusions follow from some direct computations.
\end{proof}
By this lemma, we see that if $U$ is a minimization of (\ref{e:minimization}), then so is $\mathcal R(U)$.

\begin{lemma}\label{l:trivial-limit}
Let $0\le \lambda< \Lambda_{n,s}$. Suppose that $\{U_k\}\subset \dot{H}^{1}(\mathbf R^{n+1}_+,t^{1-2s})$ is a minimization sequence for (\ref{e:minimization}) and weakly converges to $0$,
then there is a sequence $\{\hat U_k\}$ associated with $\{U_k\}$ such that $\{\hat U_k\}$ is also a minimization sequence and
\begin{equation*}
\hat U_k\to V\not\equiv 0\mbox{ in }\dot{H}^{1}(\mathbf R^{n+1}_+,t^{1-2s}).
\end{equation*}
\end{lemma}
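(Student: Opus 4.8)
The plan is to normalize, reduce by symmetrization to a radially symmetric minimizing sequence, rescale so that the $L^{2^*(s)}$-mass concentration sits at the unit scale, and then rule out a trivial weak limit for the rescaled sequence by a concentration--compactness dichotomy. First I normalize so that $\|U_k(\cdot,0)\|_{2^*(s)}=1$; then $Q(U_k)\to\beta(\lambda)$ and, by $(\ref{e:hardy-trace})$ and $\lambda<\Lambda_{n,s}$, the sequence is bounded in $\dot{H}^1(\mathbf R^{n+1}_+,t^{1-2s})$. I replace $U_k$ by the extension $\mathcal P_s\ast\big(U_k(\cdot,0)\big)$ of its own trace: by Lemmas $\ref{l:minimal-eu}$--$\ref{l:eminimal}$ and $(\ref{e:gtrace})$ this does not increase $\int_{\mathbf R^{n+1}_+}t^{1-2s}|\nabla\,\cdot\,|^2$, hence not $Q$, and keeps the trace, so the sequence stays minimizing and (trace and extension being weakly continuous) stays weakly null. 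Next I replace $u_k:=U_k(\cdot,0)$ by its symmetric decreasing rearrangement $u_k^{\ast}$: this keeps $\|u_k^{\ast}\|_{2^*(s)}=\|u_k\|_{2^*(s)}$, does not increase $\|u_k^{\ast}\|_{\dot{H}^s}$ (fractional P\'olya--Szeg\H{o}), and does not decrease $\int_{\mathbf R^n}|x|^{-2s}u_k^2$ (Hardy--Littlewood rearrangement inequality, since $|x|^{-2s}$ is symmetric decreasing); as $\lambda\ge0$, the extension of $u_k^{\ast}$ has $Q$ no larger than that of the extension of $u_k$, with the same $\|\cdot\|_{2^*(s)}$, so the rearranged sequence is still minimizing. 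Passing to a subsequence with weak limit $W$: if $W\ne0$ we are done by Lemma $\ref{l:nontrivial-limit}$; otherwise, after relabeling, $U_k$ is the extension of a radial nonincreasing $u_k$ with $\|u_k\|_{2^*(s)}=1$, $Q(U_k)\to\beta(\lambda)$, $U_k\rightharpoonup0$.

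Since $u_k$ is radial nonincreasing, $r\mapsto\int_{B_r}u_k^{2^*(s)}$ increases continuously from $0$ to $1$; I pick $R_k$ with $\int_{B_{R_k}}u_k^{2^*(s)}=\tfrac12$ and set $\hat U_k:=\mathcal R(U_k)$ with parameter $R=1/R_k$ (notation of Lemma $\ref{l:rinvariant}$), so that $\hat U_k$ is the extension of the radial nonincreasing $\hat u_k(x)=R_k^{(n-2s)/2}u_k(R_kx)$. By Lemma $\ref{l:rinvariant}$, $\{\hat U_k\}$ is again a minimizing sequence, bounded in $\dot{H}^1$, and $\int_{B_1}\hat u_k^{2^*(s)}=\tfrac12$. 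Passing to a subsequence, $\hat U_k\rightharpoonup V$ in $\dot{H}^1$, and it suffices to prove $V\ne0$, since then Lemma $\ref{l:nontrivial-limit}$ gives $\hat U_k\to V$ in norm and $\{\hat U_k\}$ is the required sequence.

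Suppose $V=0$. From $\hat u_k\rightharpoonup0$ in $\dot{H}^s$, the compact fractional Sobolev embedding $H^s(\Omega)\hookrightarrow L^q(\Omega)$ on bounded Lipschitz $\Omega$ with $q<2^*(s)$, and the pointwise bound $\hat u_k(r)\le C\,r^{-(n-2s)/2}$ valid for radial nonincreasing functions with $\|\hat u_k\|_{2^*(s)}=1$ (making $\{\hat u_k\}$ uniformly bounded on each annulus $\{0<\delta\le|x|\le r\}$), one gets $\hat u_k\to0$ in $L^{2^*(s)}$ on every such annulus. Combined with $\int_{B_1}\hat u_k^{2^*(s)}=\tfrac12$ and $\|\hat u_k\|_{2^*(s)}=1$ this forces, for suitable $\delta_k\downarrow0$ and $r_k\uparrow\infty$,
$$\int_{B_{\delta_k}}\hat u_k^{2^*(s)}\to\tfrac12,\qquad \int_{\mathbf R^n\setminus B_{r_k}}\hat u_k^{2^*(s)}\to\tfrac12,\qquad \int_{B_{r_k}\setminus B_{\delta_k}}\hat u_k^{2^*(s)}\to0.$$
A cut-off argument then splits $\hat U_k$ into $\hat U_k^{(1)}$ supported in $\mathcal B_{2\delta_k}^{+}$ and $\hat U_k^{(2)}$ supported in $\mathbf R^{n+1}_+\setminus\mathcal B_{r_k/2}^{+}$, with disjoint supports and $\|\hat U_k^{(i)}(\cdot,0)\|_{2^*(s)}^{2^*(s)}\to\tfrac12$, such that — using Lemma $\ref{l:ewss}$ (so $\hat U_k$ is bounded in $L^{2^*(s)}(\mathbf R^{n+1}_+,t^{1-2s})$) and the local decay of $\hat U_k$ to make the transition shells and the remainder negligible — $Q(\hat U_k)\ge Q(\hat U_k^{(1)})+Q(\hat U_k^{(2)})-o(1)$. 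Applying the definition of $\beta(\lambda)$ to each piece,
$$\beta(\lambda)+o(1)=Q(\hat U_k)\ \ge\ 2\,\beta(\lambda)\big(\tfrac12\big)^{2/2^*(s)}+o(1)=2^{2s/n}\,\beta(\lambda)+o(1),$$
which is impossible since $\beta(\lambda)>0$ and $2^{2s/n}>1$. Hence $V\ne0$.

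The main obstacle is the cut-off step above: one must peel off the part of $\hat U_k$ concentrating at the origin and the part escaping to infinity while losing only $o(1)$ of the $\dot{H}^1$-energy, and a cut-off at a prescribed scale need not do this, because with the degenerate weight $t^{1-2s}$, $s<1$, the error terms are not automatically small. The remedy is to choose the cut-off radii adaptively, exploiting that $\int_{\mathbf R^{n+1}_+}t^{1-2s}|\nabla\hat U_k|^2$ is finite (so some intermediate dyadic shells carry negligible Dirichlet energy) and controlling $\int|x|^{-2s}\hat u_k^2$ in the transition regions via Lemma $\ref{l:ewss}$ and the Hardy inequality. Alternatively one may bypass these estimates by the second concentration--compactness lemma (or the $\dot{H}^s$ profile decomposition) for the critical trace embedding: radial symmetry forces all singular defect atoms of $\hat u_k$ and of $t^{1-2s}|\nabla\hat U_k|^2$ to sit at the origin or at infinity, the normalization $\int_{B_1}\hat u_k^{2^*(s)}=\tfrac12$ rules out a single origin- or infinity-bubble, and strict concavity of $t\mapsto t^{2/2^*(s)}$ together with $\beta(\lambda)\le\beta(0)$ rules out any nontrivial splitting, so the whole mass remains in $V$ and $V\ne0$.
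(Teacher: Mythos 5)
Your argument and the paper's share the same skeleton: normalize, pass to the Poisson extension of the trace (via Lemmas \ref{l:minimal-eu}--\ref{l:eminimal} and (\ref{e:gtrace})), replace $u_k$ by its symmetric decreasing rearrangement (noting $\|\cdot\|_{2^*(s)}$ is preserved, $\|\cdot\|_{\dot H^s}$ does not increase, and since $|x|^{-2s}$ is symmetric decreasing the Hardy integral does not decrease, so with $\lambda\ge0$ the sequence stays minimizing), and rescale by Lemma \ref{l:rinvariant} to pin the concentration scale. The genuine divergence is in the step that rules out a trivial weak limit after rescaling. The paper defers this to \cite{DiMoPeSc15}, whose mechanism is the improved Sobolev inequality of \cite{PP:CVPDE14}, interpolating the critical $L^{2^*(s)}$ norm between $\dot H^s$ and a weak/Morrey norm; since $\|u_k\|_{2^*(s)}=1$ and $\|u_k\|_{\dot H^s}$ is bounded, the Morrey norm stays away from zero, which \emph{directly} produces a concentration radius and (after rearrangement) makes the rescaled weak limit nontrivial, with no dichotomy and no cut-off. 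You instead use the Strauss-type pointwise decay $\hat u_k(r)\le Cr^{-(n-2s)/2}$ for radial nonincreasing functions and the compact local embedding to force vanishing on every fixed annulus, and then derive a contradiction via a two-piece splitting and the strict inequality $2(\tfrac12)^{2/2^*(s)}=2^{2s/n}>1$ — Lions' dichotomy route. The arithmetic of your subadditivity step is correct, and both strategies are legitimate, but they are genuinely different tools for the same purpose.

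There is, however, a real gap in the step you yourself flag: the energy-splitting estimate
\begin{equation*}
Q(\hat U_k)\ \ge\ Q(\hat U_k^{(1)})+Q(\hat U_k^{(2)})-o(1).
\end{equation*}
Vanishing of the $L^{2^*(s)}$ mass on the middle annulus does not by itself control either the weighted Dirichlet energy $\int_{\mathcal B_{r_k/2}^+\setminus\mathcal B_{2\delta_k}^+}t^{1-2s}|\nabla\hat U_k|^2$ or the Hardy integral there, and the latter cannot be estimated from the $L^{2^*(s)}$ mass by H\"older, because $\int_{\delta\le|x|\le r}|x|^{-n}\,dx=c\log(r/\delta)$ diverges as $r/\delta\to\infty$. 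Moreover, with $\lambda\ge 0$ the Hardy term works \emph{against} you here: writing $Q=D-\kappa_s\lambda H$, one has $H(\hat U_k)\ge H(\hat U_k^{(1)})+H(\hat U_k^{(2)})$ once $\eta_1^2+\eta_2^2\le1$, so you actually need $D(\hat U_k)-D^{(1)}-D^{(2)}\ge\kappa_s\lambda\bigl(H(\hat U_k)-H^{(1)}-H^{(2)}\bigr)-o(1)$, and $D(\hat U_k)-D^{(1)}-D^{(2)}$ itself picks up negative error terms $\sim\int t^{1-2s}|\nabla\eta_i|^2\hat U_k^2$ from the cut-offs, which by Lemma \ref{l:ewss} and H\"older are merely scale-invariant, not automatically $o(1)$. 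The adaptive dyadic-shell selection and the profile decomposition you mention as remedies are indeed the standard ways to close this, but as written the inequality is asserted rather than proved. The improved-Sobolev route the paper cites simply never meets this difficulty, since it involves no truncation of $\hat U_k$. To make your version complete you would need to carry out the shell selection in detail — choosing, between $\delta_k$ and $r_k$, a transition annulus carrying small weighted Dirichlet energy, small weighted $L^{2\gamma}$ mass (both possible by summability over $\sim\log(r_k/\delta_k)\to\infty$ dyadic shells), and separately small Hardy mass — or invoke a profile decomposition in $\dot H^s(\mathbf R^n)$ outright.
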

\begin{proof}
From the homogeneity of (\ref{e:minimization}), we may assume that $Q(U_k)\to \beta(\lambda)$ and $\|U_k(\cdot,0)\|_{2^*(s)}=1$.
Define
\begin{eqnarray}\label{e:qfnonl}
\mathcal Q(u,v)&:=&C_{n,s}\int_{\mathbf R^{2n}}\frac{(u(x)-u(y))(v(x)-v(y))}{|x-y|^{n+2s}}dxdy-\lambda\int_{\mathbf R^n}\frac{uv}{|x|^{2s}}\\
&=&\int_{\mathbf R^n}|\xi|^{2s}\mathcal F (u)(\xi)\mathcal F(v)(\xi)d\xi-\lambda\int_{\mathbf R^n}\frac{uv}{|x|^{2s}}\notag\\
&=&\langle u,v\rangle_{\dot{H}^s(\mathbf R^n)}-\lambda\int_{\mathbf R^n}\frac{uv}{|x|^{2s}}.\notag
\end{eqnarray}
Where $C_{n,s}=2^{2s-1}\pi^{-\frac{n}{2}}\Gamma(\frac{n+2s}{2})/|\Gamma(-s)|$ (see e.g. \cite{FLS:JAMS2007}). By Hardy-type inequality (\ref{e:hardy-ineq}), $\mathcal Q(\,\cdot\,,\,\cdot\,)$ are continuous in $\dot{H}^s(\mathbf R^n)\times\dot{H}^s(\mathbf R^n)$.
For simplicity, we also denote
\begin{equation*}
\mathcal Q(u):=\mathcal Q(u,u)=\langle u,u\rangle_{\dot{H}^s(\mathbf R^n)}-\lambda\int_{\mathbf R^n}\frac{u^2}{|x|^{2s}},\quad u\in \dot{H}^s(\mathbf R^n).
\end{equation*}

Let $u_k(x)=U_k(\cdot,0)$ and \begin{equation}\label{e:minimization1}
\tilde\beta(\lambda):=\inf_{u\in \dot{H}^s(\mathbf R^n)\setminus\{0\}}\frac{\mathcal Q(u)}{\|u\|_{2^*(s)}^2}.
\end{equation}
By a similar argument as in the proof of \cite[Theorem 1.5]{DiMoPeSc15}, we find a minimization sequence (by a decreasing rearrangement, using an improved Sobolev embedding inequality proved in \cite[Theorem 1.1]{PP:CVPDE14} and some proper rescaling of $u_k$ as in Lemma \ref{l:rinvariant}) $\tilde u_k$ of (\ref{e:minimization1}) such that
\begin{equation*}
\tilde u_k\to \tilde v\not\equiv 0 \mbox{ in }\dot{H}^s(\mathbf R^n).
\end{equation*}
Let $\tilde U_k(x,t)=(\mathcal P_s\ast\tilde u_k)(x,t)$ and $\tilde V(x,t)=(\mathcal P_s\ast\tilde v)(x,t)\not\equiv 0$. By (\ref{e:gtrace}), Lemma \ref{l:minimal-eu} and Lemma \ref{l:rinvariant},
we have $\tilde U_k\to \tilde V$ in $\dot{H}^1(\mathbf R^{n+1}_+,t^{1-2s})$ and $\tilde V$ is a minimizer of (\ref{e:minimization}). This completes the proof.
\end{proof}

\begin{proof}[Proof of Theorem \ref{t:existence-6}]
The result of Theorem \ref{t:existence-6} follows from Lemma \ref{l:nontrivial-limit} and Lemma \ref{l:trivial-limit}.
\end{proof}


\appendix

\section{Kelvin transformation}\label{s:kt}
The Kelvin transformation of $U$ is given by
\begin{equation*}
U_{X,\rho}(\xi):=\left(\frac{\rho}{|\xi-X|}\right)^{n-2s}U\left(X+\frac{\rho^2(\xi-X)}{|\xi-X|^2}\right),\quad \xi\in \mathbf R^{n+1}_+\setminus \{X\}.
\end{equation*}

\begin{lemma}\label{l:akte}
If $U$ satisfies (\ref{e:mail-extention}), then $U_{X,\rho}$ is a solution of the following problem
\begin{equation}\label{e:ktea}
\left\{\begin{array}{rl}
         {\rm div}(t^{1-2s}\nabla U_{X_0,\rho})(\xi)=0,\quad\quad{\rm in}\,\,\mathbf R^{n+1}_+, \\
         \begin{array}{rr}
        \displaystyle-\lim_{t\to 0^+}t^{1-2s}\partial_t U_{X_0,\rho}(\xi)=\kappa_s\left[\left(\frac{\rho}{|x-x_0|}\right)^{4s}\frac{\lambda}{|x_{\rho,x_0}|^{\alpha}}U_{X_0,\rho}(x_{\rho,x_0},0)\quad\quad\right.&\\
         +\left.\left(\dfrac{\rho}{|x-x_0|}\right)^{n+2s-p(n-2s)}(|U_{X_0,\rho}|^{p-1}U)(x_{\rho,x_0},0)\right], &\quad{\rm on}\,\,\mathbf R^n,
         \end{array}
       \end{array}
\right.
\end{equation}
where $\xi=(x,t)$, $X_0=(x_0,0)$ and
\begin{equation}\label{e:xxla}
x_{\rho,x_0}:=x_0+\frac{\rho^2(x-x_0)}{|x-x_0|^2}.
\end{equation}
\end{lemma}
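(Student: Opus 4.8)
The plan is to verify the two assertions of \eqref{e:ktea}---the interior equation and the Neumann boundary condition---by a direct computation, exploiting the conformal behaviour of the degenerate operator $L_sU:={\rm div}(t^{1-2s}\nabla U)$ and of the weighted normal trace $\mathcal N_sU:=-\lim_{t\to0^+}t^{1-2s}\partial_tU$ under the inversion
\[
\Phi\colon\ \xi\longmapsto\xi^\ast:=X+\frac{\rho^2(\xi-X)}{|\xi-X|^2},\qquad X=(x_0,0)\in\partial\mathbf R^{n+1}_+ .
\]
The starting remarks are: since the last coordinate of $X$ vanishes, $\Phi$ is an involution that maps $\overline{\mathbf R^{n+1}_+}\setminus\{X\}$ onto itself, fixes $\partial\mathcal B_\rho(X)$, and restricts on $\{t=0\}$ to the $n$-dimensional Kelvin transform $x\mapsto x_{\rho,x_0}$ of \eqref{e:xxla}; moreover $|\xi^\ast-X|=\rho^2/|\xi-X|$ and, writing $\xi=(x,t)$, $\xi^\ast=(x^\ast,t^\ast)$, one has $t^\ast=\rho^2 t/|\xi-X|^2$, so that the weight transforms by $(t^\ast)^{1-2s}=(\rho/|\xi-X|)^{2(1-2s)}t^{1-2s}$. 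In particular the trace of $U_{X,\rho}$ is $U_{X,\rho}(\,\cdot\,,0)=u_{x_0,\rho}$, the $n$-dimensional Kelvin transform of $u:=U(\,\cdot\,,0)$.

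First I would establish the interior identity. The differential $D\Phi(\xi)$ is $(\rho/|\xi-X|)^2$ times an orthogonal matrix (a Householder reflection), so $\Phi$ is conformal with conformal factor $(\rho/|\xi-X|)^2$; combining this with the transformation law of the weight and differentiating $U_{X,\rho}(\xi)=(\rho/|\xi-X|)^{n-2s}(U\circ\Phi)(\xi)$ twice (the same chain-rule computation used in \cite{JinLiXiongJEMS2014,CJSXARMA2014}) yields an identity of the form
\[
L_s\bigl(U_{X,\rho}\bigr)(\xi)=\Bigl(\frac{\rho}{|\xi-X|}\Bigr)^{n+2+2s}\bigl(L_sU\bigr)(\xi^\ast)\qquad\text{in }\mathbf R^{n+1}_+ .
\]
Since $\xi\in\mathbf R^{n+1}_+$ forces $\xi^\ast\in\mathbf R^{n+1}_+$ and $L_sU=0$ there, the right-hand side vanishes, which is the first line of \eqref{e:ktea}.

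Next I would treat the Neumann datum. The same conformal bookkeeping carried out along $\{t=0\}$ gives the boundary covariance
\[
\mathcal N_s\bigl(U_{X,\rho}\bigr)(x)=\Bigl(\frac{\rho}{|x-x_0|}\Bigr)^{n+2s}\bigl(\mathcal N_sU\bigr)(x_{\rho,x_0}),\qquad x\in\mathbf R^n\setminus\{x_0\},
\]
which is nothing but the classical covariance $(-\Delta)^s(u_{x_0,\rho})(x)=(\rho/|x-x_0|)^{n+2s}\bigl((-\Delta)^su\bigr)(x_{\rho,x_0})$ of the Kelvin transform in $\mathbf R^n$ transported through the Caffarelli--Silvestre extension. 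Now insert the boundary condition of \eqref{e:mail-extention}, i.e.\ $\mathcal N_sU(x_{\rho,x_0})=\kappa_s\bigl(\lambda|x_{\rho,x_0}|^{-\alpha}U(x_{\rho,x_0},0)+|U(x_{\rho,x_0},0)|^{p-1}U(x_{\rho,x_0},0)\bigr)$, and re-express the trace values $U(x_{\rho,x_0},0)$ by means of the involution identity $U(x_{\rho,x_0},0)=(|x-x_0|/\rho)^{n-2s}U_{X,\rho}(x,0)$ together with $|\xi^\ast-X|=\rho^2/|\xi-X|$. Collecting the powers of $\rho/|x-x_0|$ then reproduces exactly the right-hand side of the second line of \eqref{e:ktea}: the Hardy term picks up the exponent $(n+2s)-(n-2s)=4s$, the power nonlinearity picks up $(n+2s)-p(n-2s)$, and the factor $|x_{\rho,x_0}|^{-\alpha}$ is untouched.

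I do not anticipate any conceptual difficulty; the argument is entirely computational, and the main obstacle is purely bookkeeping. The one step that genuinely needs care is the transformation law of the weight in the interior identity: because $t^\ast/t=\rho^2/|\xi-X|^2$ is not constant, $L_s$ does \emph{not} transform like an unweighted Laplacian in the ``effective dimension'' $n+2-2s$, so one must keep the covariance exponent $n+2+2s$ straight (the quickest internal check is $s=\tfrac12$, where $L_{1/2}=\Delta_{\mathbf R^{n+1}}$, the Kelvin exponent $n-2s$ becomes $n-1$ and the covariance exponent $n+2+2s$ becomes $n+3$, the classical values in dimension $n+1$). A secondary, minor point is to justify that the pointwise limits defining $\mathcal N_s$ survive the change of variables, which follows from the smoothness of $U$ away from $X$ supplied by the extension.
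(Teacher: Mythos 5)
Your proposal is correct and follows essentially the same route as the paper: both prove the first line by invoking the (well-known) conformal covariance of $L_s={\rm div}(t^{1-2s}\nabla\,\cdot\,)$ under inversion about a boundary point, and both obtain the second line by computing how $\mathcal N_sU:=-\lim_{t\to0^+}t^{1-2s}\partial_t U$ transforms, picking up the factor $(\rho/|x-x_0|)^{n+2s}$, and then converting $U(x_{\rho,x_0},0)$ back to $U_{X_0,\rho}(x,0)$ to split off the exponents $4s$ and $n+2s-p(n-2s)$. The only difference is cosmetic: you package the boundary computation as a stand-alone covariance identity $\mathcal N_s(U_{X,\rho})(x)=(\rho/|x-x_0|)^{n+2s}(\mathcal N_sU)(x_{\rho,x_0})$, whereas the paper reaches the same formula by splitting $\partial_t U_{X_0,\rho}$ into two terms and passing to the limit directly; the interior covariance exponent $n+2+2s$ you cite is correct (it reduces to $n+3$ at $s=\tfrac12$, and matches the scaling $L_s\circ\delta_a=a^{2s+1}\delta_a\circ L_s$ combined with the Kelvin exponent $n-2s$).
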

\begin{proof}
The first equation of (\ref{e:ktea})  is a well-known fact. So we only give a detailed computation of the second equation of (\ref{e:ktea}).

Firstly, we compute $\partial_t U_{X_0,\rho}(x,t)$ as follows:
\begin{eqnarray*}
&&\frac{\partial}{\partial t}U_{X_0,\rho}(x,t)\\
&&=\frac{\partial}{\partial t}\left(\frac{\rho^{n-2s}}{|\xi-X_0|^{n-2s}}\right)U\left(X_0+\frac{\rho^2(\xi-X_0)}{|\xi-X_0|^2}\right)
+\frac{\rho^{n-2s}}{|\xi-X_0|^{n-2s}}\frac{\partial }{\partial t}U\left(X_0+\frac{\rho^2(\xi-X_0)}{|\xi-X_0|^2}\right)\notag\\
&&:=T_9+T_{10}.\notag
\end{eqnarray*}

A direct calculation yields that
\begin{equation*}
T_9=t\frac{(2s-n)\rho^{n-2s}}{|\xi-X_0|^{n+2-2s}}U\left(X_0+\frac{\rho^2(\xi-X_0)}{|\xi-X_0|^2}\right).
\end{equation*}

As for $Q_2$, we have
\begin{eqnarray*}
&&\frac{\partial}{\partial t}U\left(X_0+\frac{\rho^2(\xi-X_0)}{|\xi-X_0|^2}\right)\\
&&=\left(\frac{\rho^2}{|\xi-X_0|^2}\right)\left(\frac{\partial U}{\partial t}\right)\left(X_0+\frac{\rho^2(\xi-X_0)}{|\xi-X_0|^2}\right)\notag\\
&&-2t\rho^2\sum_{k=1}^{n+1}\left(\frac{\partial U}{\partial \xi_k}\right)\left(X_0+\frac{\rho^2(\xi-X_0)}{|\xi-X_0|^2}\right)\left(\frac{\xi_k-(X_0)_k}{|\xi-X_0|^4}\right).\notag
\end{eqnarray*}
Hence
\begin{eqnarray*}
T_{10}&=&\left(\frac{\rho^{n+2-2s}}{|\xi-X_0|^{n+2-2s}}\right)\left(\frac{\partial U}{\partial t}\right)\left(X_0+\frac{\rho^2(\xi-X_0)}{|\xi-X_0|^2}\right)\notag\\
&&-t\left(\frac{2\rho^{n+2-2s}}{|\xi-X_0|^{n+4-2s}}\right)\sum_{k=1}^{n+1}\left(\frac{\partial U}{\partial \xi_k}\right)\left(X_0+\frac{\rho^2(\xi-X_0)}{|\xi-X_0|^2}\right)\left(\xi_k-(X_0)_k\right).\notag
\end{eqnarray*}

Therefore, since $s\in (0,1)$,
\begin{eqnarray}\label{e:boundary}
&&-\lim_{t\to 0^+}t^{1-2s}\frac{\partial}{\partial t}U_{X_0,\rho}(x,t)\\
&&=-\lim_{t\to 0^+}\left(\frac{\rho^{n+2-2s}}{|\xi-X_0|^{n+2-2s}}\right)t^{1-2s}\left(\frac{\partial U}{\partial t}\right)\left(X_0+\frac{\rho^2(\xi-X_0)}{|\xi-X_0|^2}\right)\notag\\
&&=-\left(\frac{\rho^{n+2-2s}}{|x-x_0|^{n+2-2s}}\right)\lim_{t\to 0^+}t^{1-2s}\left(\frac{\partial U}{\partial t}\right)\left(x+\frac{\rho^2(x-x_0)}{|\xi-X_0|^2},\frac{\rho^2t}{|\xi-X_0|^2}\right)\notag\\
&&=-\left(\frac{\rho^{n+2s}}{|x-x_0|^{n+2s}}\right)\lim_{t\to 0^+}\left(\frac{\rho^2t}{|\xi-X_0|^2}\right)^{1-2s}\left(\frac{\partial U}{\partial t}\right)\left(x+\frac{\rho^2(x-x_0)}{|\xi-X_0|^2},\frac{\rho^2t}{|\xi-X_0|^2}\right)\notag.
\end{eqnarray}
From (\ref{e:mail-extention}), (\ref{e:boundary}) and the definition (\ref{e:xxla}) of $x_{\rho, x_0}$, it holds that
\begin{eqnarray*}
&&-\lim_{t\to 0^+}t^{1-2s}\frac{\partial}{\partial t}U_{X_0,\rho}(x,t)\\
&&=\kappa_s\left(\frac{\rho^{n+2s}}{|x-x_0|^{n+2s}}\right)\left(\frac{\lambda}{|x_{\rho,x_0}|^{\alpha}}U(x_{\rho,x_0},0)
+(|U|^{p-1}U)(x_{\rho,x_0},0)\right)\notag\\
&&=\kappa_s\left[\left(\frac{\rho}{|x-x_0|}\right)^{4s}\left(\frac{\lambda}{|x_{\rho,x_0}|^{\alpha}}\right)U_{X_0,\rho}(x_{\rho,x_0},0)\right.\notag\\
&&\quad\quad\quad+\left.\left(\frac{\rho}{|x-x_0|}\right)^{n+2s-p(n-2s)}(|U_{X_0,\rho}|^{p-1}U)(x_{\rho,x_0},0)\right].\notag
\end{eqnarray*}
Thus we have the second equation of (\ref{e:ktea}).
\end{proof}


\newcommand{\Toappear}{to appear in}

\bibliography{mrabbrev,cz_abbr2003-0,localbib}

\begin{thebibliography}{10}

\bibitem{barrios2014moving}
Bego{\~n}a Barrios, Luigi Montoro, and Berardino Sciunzi.
\newblock On the moving plane method for nonlocal problems in bounded domains.
\newblock {\em arXiv:1405.5402}, 2014.

\bibitem{Becker:PAMS95}
W.~Becher.
\newblock Pitt's inequality and the uncertainty principle.
\newblock {\em Proc. Amer. Math. Soc.}, 123:1897--1905, 1995.

\bibitem{BerestyckiNirenberg1991}
H.~Berestycki and L.~Nirenberg.
\newblock On the method of moving planes and the sliding method.
\newblock {\em Bol. Soc. Brasil. Mat. (N.S.)}, 22(1):1--37, 1991.

\bibitem{brandle2013concave}
Cristina Br{\"a}ndle, Eduardo Colorado, Arturo de~Pablo, and U~S{\'a}nchez.
\newblock A concave-convex elliptic problem involving the fractional
  {L}aplacian.
\newblock {\em Proceedings of the Royal Society of Edinburgh: Section A
  Mathematics}, 143(01):39--71, 2013.

\bibitem{BrezisLiebPAMS83}
H.~Brezis and E.~Lieb.
\newblock A relation between pointwise convergence of functions and convergence
  of functionals.
\newblock {\em Proc. Amer. Math. Soc.}, 88:486--490, 1983.

\bibitem{cabre2014sharp}
Xavier Cabr{\'e} and Eleonora Cinti.
\newblock Sharp energy estimates for nonlinear fractional diffusion equations.
\newblock {\em Calculus of Variations and Partial Differential Equations},
  49(1-2):233--269, 2014.

\bibitem{cabre2010positive}
Xavier Cabr{\'e} and Jinggang Tan.
\newblock Positive solutions of nonlinear problems involving the square root of
  the {L}aplacian.
\newblock {\em Advances in Mathematics}, 224(5):2052--2093, 2010.

\bibitem{CJSXARMA2014}
Luis Caffarelli, Tianling Jin, Yannick Sire, and Jingang Xiong.
\newblock Local analysis of solutions of fractional semi-linear elliptic
  equations with isolated singularities.
\newblock {\em Arch. Ration. Mech. Anal.}, 213(1):245--268, 2014.

\bibitem{CaSiCPDE2007}
Luis Caffarelli and Luis Silvestre.
\newblock An extension problem related to the fractional {L}aplacian.
\newblock {\em Comm. Partial Differential Equations}, 32(7-9):1245--1260, 2007.

\bibitem{CGSCPAM1989}
Luis~A. Caffarelli, Basilis Gidas, and Joel Spruck.
\newblock Asymptotic symmetry and local behavior of semilinear elliptic
  equations with critical {S}obolev growth.
\newblock {\em Comm. Pure Appl. Math.}, 42(3):271--297, 1989.

\bibitem{ChenLi:Duke91}
Wenxiong Chen and Congming Li.
\newblock Classification of solutions of some nonlinear ellitptic equations.
\newblock {\em Duke Math. J.}, 63(3):615--622, 1991.

\bibitem{ChenLiLi2014}
Wenxiong Chen, Congming Li, and Yan Li.
\newblock A direct method of moving planes for the fractional laplacian.
\newblock {\em arXiv:1411.1697}, 2014.

\bibitem{ChenLiOUCPAM2006}
Wenxiong Chen, Congming Li, and Biao Ou.
\newblock Classification of solutions for an integral equation.
\newblock {\em Comm. Pure Appl. Math.}, 59(3):330--343, 2006.

\bibitem{DDGW2015}
Azahara DelaTorre, Manual del Pino, Mar{\'{\i}}a del~Mar Gonz{\'a}lez, and
  Juncheng Wei.
\newblock Delaunay-type singular solutions for the fractional {Y}amabe problem.
\newblock {\em arXiv:1510.08504v1}, 2015.

\bibitem{DiMeVa15}
Serena Dipierro, Maria Medina, and Enrico Valdinoci.
\newblock Fractional elliptic problems with critical growth in the whole of
  $\mathbf {R}^n$.
\newblock {\em arXiv:1506.01748v1}, 2015.

\bibitem{DiMoPeSc15}
Serena Dipierro, Luigi Montoro, Ireneo Peral, and Berardino Sciunzi.
\newblock Qualititative properties of positive solutions to nonlocal critical
  problems involving the {H}ardy-{L}eray potential.
\newblock {\em arXiv:1506.07317v1}, 2015.

\bibitem{FabesKenigCarlosCPDE1982}
Eugene~B. Fabes, Carlos~E. Kenig, and Raul~P. Serapioni.
\newblock The local regularity of solutions of degenerate elliptic equations.
\newblock {\em Comm. Partial Differential Equations}, 7(1):77--116, 1982.

\bibitem{FallJFA2014}
Mouhamed~Moustapha Fall and Veronica Felli.
\newblock Sharp essential self-adjointness of relativistic {S}chr\"odinger
  operators with a singular potential.
\newblock {\em J. Funct. Anal.}, 267(6):1851--1877, 2014.

\bibitem{FallFelliCPDE2014}
Mouhamed~Moustapha Fall and Veronica Felli.
\newblock Unique continuation property and local asymptotics of solutions to
  fractional elliptic equations.
\newblock {\em Comm. Partial Differential Equations}, 39(2):354--397, 2014.

\bibitem{felmer2014radial}
Patricio Felmer and Ying Wang.
\newblock Radial symmetry of positive solutions to equations involving the
  fractional {L}aplacian.
\newblock {\em Communications in Contemporary Mathematics}, 16(01):1350023,
  2014.

\bibitem{Frank:CMP2009}
Rubert~L. Frank.
\newblock A simple proof of {H}ardy-{L}ieb-{T}hirring inequalities.
\newblock {\em Comm. Math. Phys.}, 290:789--800, 2009.

\bibitem{FLS:JAMS2007}
Rubert~L. Frank, Elliott~H. Lieb, and Robert Seiringer.
\newblock {H}ardy-{L}ieb-{T}hirring inequalities for fractional {S}chr\"odinger
  operators.
\newblock {\em J. Amer. Math. Soc.}, 21(4):925--950, 2007.

\bibitem{GNN:CMP79}
B.~Gidas, W.~M. Ni, and L.~Nirenberg.
\newblock Symmetry and related properties via the maximum principle.
\newblock {\em Comm. Math. Phys.}, 68:209--243, 1979.

\bibitem{HerbstCMP1977}
Ira~W. Herbst.
\newblock Spectral theory of the operator {$(p^{2}+m^{2})^{1/2}-Ze^{2}/r$}.
\newblock {\em Comm. Math. Phys.}, 53(3):285--294, 1977.

\bibitem{JarohsWethDCDS2014}
Sven Jarohs and Tobias Weth.
\newblock Asymptotic symmetry for a class of nonlinear fractional
  reaction-diffusion equations.
\newblock {\em Discrete Contin. Dyn. Syst.}, 34(6):2581--2615, 2014.

\bibitem{jarohs2014symmetry}
Sven Jarohs and Tobias Weth.
\newblock Symmetry via antisymmetric maximum principles in nonlocal problems of
  variable order.
\newblock {\em Annali di Matematica Pura ed Applicata (1923-)}, pages 1--19,
  2014.

\bibitem{JinQinianLiyanyanXuADE2008}
Qinian Jin, Yan~Yan Li, and Haoyuan Xu.
\newblock Symmetry and asymmetry: the method of moving spheres.
\newblock {\em Adv. Differential Equations}, 13(7-8):601--640, 2008.

\bibitem{JinLiXiongJEMS2014}
Tianling Jin, Yan~Yan Li, and Jingang Xiong.
\newblock On a fractional {N}irenberg problem, part {I}: blow up analysis and
  compactness of solutions.
\newblock {\em J. Eur. Math. Soc. (JEMS)}, 16(6):1111--1171, 2014.

\bibitem{Kato95}
Tosio Kato.
\newblock {\em Perturbation theory for linear operators}.
\newblock Classics in Mathematics. Springer-Verlag, Berlin, 1995.

\bibitem{KMW2015}
Seunghyeok Kim, Monica Musso, and Juncheng Wei.
\newblock A non-compactness result on the fractional {Y}amabe problem in large
  dimensions.
\newblock {\em arXiv:1505.06183v2}, 2015.

\bibitem{YORJMP1997}
A.~Le~Yaouanc, L.~Oliver, and J.-C. Raynal.
\newblock The {H}amiltonian {$(p^2+m^2)^{1/2}-\alpha/r$} near the critical
  value {$\alpha_c=2/\pi$}.
\newblock {\em J. Math. Phys.}, 38(8):3997--4012, 1997.

\bibitem{Li:Invent96}
Congming Li.
\newblock Local asymptotic symmetry of singular solutions to nonlinear elliptic
  equations.
\newblock {\em Invent. Math.}, 123(2):221--231, 1996.

\bibitem{LiYanYanJEMS2004}
Yan~Yan Li.
\newblock Remark on some conformally invariant integral equations: the method
  of moving spheres.
\newblock {\em J. Eur. Math. Soc. (JEMS)}, 6(2):153--180, 2004.

\bibitem{LiYanYan2006}
Yan~Yan Li.
\newblock Conformally invariant fully nonlinear elliptic equations and isolated
  singularities.
\newblock {\em J. Funct. Anal.}, 233(2):380--425, 2006.

\bibitem{LiZhuDukeMath1995}
Yan~Yan Li and Meijun Zhu.
\newblock Uniqueness theorems through the method of moving spheres.
\newblock {\em Duke Math. J.}, 80(2):383--417, 1995.

\bibitem{MuckTAMS72}
Benjamin Muckenhoupt.
\newblock Weighted norm inequalities for the {H}ardy maximal function.
\newblock {\em Transactions of the American Mathematical Society},
  165:207--226, 1972.

\bibitem{PP:CVPDE14}
G.~Patatucci and A.~Pisante.
\newblock Improved {S}obolev embeddings, profile decomposition, and
  concentration-compactness for fractional {S}obolev spaces.
\newblock {\em Calc. Var. Partial Differential Equations}, 50:799--829, 2014.

\bibitem{ReedSimon75}
M.~Reed and B.~Simon.
\newblock {\em Methods of modern mathematical physics. {II}. {F}ourier
  Analysis, self-adjointness}.
\newblock Academic Press, New York-London, 1975.

\bibitem{RS:ARMA14}
Xavier Ros-Oton and Joaquim Serra.
\newblock The {P}ohozaev identity for the fractional {L}aplacian.
\newblock {\em Archive for Rational Mechanics and Analysis}, 213(2):587--628,
  2014.

\bibitem{Salsa08}
S.~Salsa.
\newblock {\em Partial differential equations in action. {F}rom modelling to
  theory}.
\newblock Universitext. Springer-Verlag, Italia, Milan, 2008.

\bibitem{Serrin:ARMA71}
J.~Serrin.
\newblock A symmetry problem in potential theory.
\newblock {\em Arch. Rational Mech. Anal.}, 43:304--318, 1971.

\bibitem{smets2005nonlinear}
Didier Smets.
\newblock Nonlinear {S}chr{\"o}dinger equations with {H}ardy potential and
  critical nonlinearities.
\newblock {\em Transactions of the American Mathematical Society},
  357(7):2909--2938, 2005.

\bibitem{Terrachini:ADE96}
Susanna Terrachini.
\newblock On positive entire solutions to a class of equations with a singular
  coefficient and critical exponent.
\newblock {\em Advances in Differential Equations}, 1(2):241--264, 1996.

\bibitem{Yafaev:JFA99}
D.~Yafaev.
\newblock Sharp constants in the {H}ardy-{R}ellich inequalities.
\newblock {\em J. Funct. Anal.}, 168(1):121--144, 1999.

\bibitem{YanYangYuJFA2015}
Shusen Yan, Jianfu Yang, and Xiaohui Yu.
\newblock Equations involving fractional {L}aplacian operator: compactness and
  application.
\newblock {\em J. Funct. Anal.}, 269(1):47--79, 2015.

\end{thebibliography}

\bibliographystyle{plain}
\end{document}